\newcommand\+{\;\lower\plusheight\hbox{$+$}\;}
\newcommand\lldots{\;\lower\plusheight\hbox{$\cdots$}\;}
\newcommand{\SL}{\mbox{SL}}
\newcommand{\GL}{\mbox{GL}}
\newcommand{\Gr}{\mbox{Gr}}
\newcommand{\Z}{\mathbb{Z}}
\newcommand{\Q}{\mathbb{Q}}
\newcommand{\HH}{\mathbb{H}}
\newcommand{\A}{\mathbb{A}}
\newcommand{\C}{\mathbb{C}}
\newcommand{\RR}{\mathbb{R}}
\newcommand{\D}{\mathbb{D}}
\newtheorem{Theorem}{Theorem}[section]
\newtheorem{Lemma}[Theorem]{Lemma}
\newtheorem{Corollary}[Theorem]{Corollary}
\newtheorem{Definition}[Theorem]{Definition}
\newtheorem{Example}[Theorem]{Example}
\newtheorem{Remark}[Theorem]{Remark}
\newtheorem{Proposition}[Theorem]{Proposition}
\def\Re{\mathop{\rm Re}\nolimits}
\def\Im{\mathop{\rm Im}\nolimits}
\newdimen\plusheight
\newdimen\minusheight
\newdimen\cdotsheight
\begin{document}

\title{Difference of a Hauptmodul for $\Gamma_{0}(N)$ and certain Gross--Zagier type CM value formulas}
\author{Dongxi Ye}
\address{
School of Mathematics (Zhuhai), Sun Yat-sen University, Zhuhai 519082, Guangdong,
People's Republic of China}
\email{yedx3@mail.sysu.edu.cn }
\subjclass[2010]{11F03; 11F27; 11F41; 11G15; 11G18}
\keywords{Borcherds product; the Gross--Zagier CM value formula; the Monster denominator formula;  Modular forms for the Weil representation}
\thanks{The author is supported by the Natural Science Foundation of China (grant No.11901586), the Natural Science Foundation of Guangdong Province (grant No.2019A1515011323) and the Sun Yat-sen University Research Grant for Youth Scholars (grant No.19lgpy244).}
\maketitle
\begin{abstract}
In this work, we show that the difference of a Hauptmodul for a genus zero group $\Gamma_{0}(N)$ as a modular function on $Y_{0}(N)\times Y_{0}(N)$ is a Borcherds lift of type $(2,2)$. As applications, we derive Monster denominator formula like product expansions for these modular functions and certain Gross--Zagier type CM value formulas.
\end{abstract}
\noindent
\numberwithin{equation}{section}
\allowdisplaybreaks
\section{Introduction}
\label{intro}

In his seminal work \cite{bor3,bor}, Borcherds develops a remarkable method to construct meromorphic modular forms $\Psi(z,f)$ on an orthogonal Shimura variety associated to some rational quadratic space of signature $(n,2)$ from some weakly holomorphic modular form $f$ for the Weil representation of ${\rm SL}_{2}(\Z)$ via regularizing an integral called theta-lift against the Siegel theta function. We now call such meromorphic modular forms Borcherds lifts of type $(n,2)$. Moreover, Borcherds shows that $\Psi(z,f)$ has a beautiful product representation called Borcherds product for its Fourier expansion near a cusp of the orthogonal Shimura variety. One of the most famous Borcherds lifts is the difference $j(\tau_{1})-j(\tau_{2})$ of the well known Klein's modular $j$-invariant as a modular function for $\SL_{2}(\Z)\times\SL_{2}(\Z)$, which is a Borcherds lift of type $(2,2)$ with $j-744$ as its theta-lift input, namely, $j(z_{1})-j(z_{2})=\Psi(z,j-744)$. If we write $j(\tau)=q^{-1}+\sum_{n\geq0}c(n)q^{n}$ with $q:=\exp(2\pi i\tau)$, then computing the Borcherds product of $\Psi(z,j-744)$ near the cusp of the underlying Shimura variety of type $(2,2)$, which is identified with the cusp $(i\infty,i\infty)$ of $Y(1)\times Y(1)$, we can recover the famous Monster denominator formula \cite{bor4}, namely,
\begin{equation}
\label{monst}
j(z_{1})-j(z_{2})=(q_{1}^{-1}-q_{2}^{-1})\prod_{m,n>0}\left(1-q_{1}^{m}q_{2}^{n}\right)^{c(mn)}.
\end{equation}
Furthermore, another interesting application of the fact that $j(z_{1})-j(z_{2})$ is a Borcherds lift $\Psi(z,j-~744)$ is to serve as a key ingredient  in giving a new proof to the interesting and famous Gross--Zagier CM value formula \cite{gz}, namely,
\begin{equation}
\label{GZj}
\left|{\rm Norm}\left(j\left(\frac{d_{1}+\sqrt{d_{1}}}{2}\right)-j\left(\frac{d_{2}+\sqrt{d_{2}}}{2}\right)\right)\right|^{\frac{8}{w_{1}w_{2}}}=\prod_{\substack{x,n,n'\in\mathbb{Z}\\n,n'>0\\x^{2}+4nn'=d_{1}d_{2}}}n^{\epsilon(n')},
\end{equation}
where $d_{1}$ and $d_{2}$ are coprime negative fundamental discriminants, $\epsilon(n')$ is multiplicative, and $\epsilon(p)$ is defined via the local Hilbert symbol at a prime $p$, using  the so called big CM value formula established by Bruinier, Kudla and Yang \cite{bky}. Such a formula was first discovered and proved by Gross and Zagier \cite{gz}, and it presents a beautiful and remarkable prime factorization formula for the rational norm of the algebraic integer $j\left(\frac{d_{1}+\sqrt{d_{1}}}{2}\right)-j\left(\frac{d_{2}+\sqrt{d_{2}}}{2}\right)$ and reveals the arithmetic information encoded in the exponents of the prime factors via Hilbert symbols. The new proof of the Gross--Zagier CM value formula as mentioned above has been recently worked out by Yang and Yin \cite{yangyin}, and it yields an equivalent form of \eqref{GZj} as follows.  
\begin{Theorem}[Gross and Zagier]
Let $d_{1},d_{2}$ be two coprime negative fundamental discriminants, and write $E_{i}=\mathbb{Q}(\sqrt{d_{i}})$ for $i\in\{1,2\}$. Let $F=\mathbb{Q}(\sqrt{D})$ with $D=d_{1}d_{2}$, and write $E=E_{1}E_{2}=\mathbb{Q}(\sqrt{d_{1}},\sqrt{d_{2}})$. Then
\begin{align}
&\sum_{([\mathfrak{a}_{1}],[\mathfrak{a}_{2}])\in {\rm Cl}(E_{1})\times{\rm Cl}(E_{2})}\log|j(\tau_{\mathfrak{a}_{1}})-j(\tau_{\mathfrak{a}_{2}})|^{\frac{8}{w_{1}w_{2}}}\nonumber\\
\label{equivgz}
&=\sum_{\substack{t=\frac{2m+D+\sqrt{D}}{2}\\ |2m+D|<\sqrt{D}\\m\in\Z}}\sum_{\text{$\mathfrak{p}$ inert in $E/F$}}\frac{1+{\rm ord}_{\mathfrak{p}}(t\mathcal{O}_{F})}{2}\rho(t\mathfrak{p}^{-1})\log(N(\mathfrak{p})),
\end{align}
where ${\rm Cl}(E_{i})$ is the ideal class group of $E_{i}$, $\tau_{\mathfrak{a}}=\frac{b+\sqrt{d}}{2a}$ is the unique CM point in the upper half plane $\mathbb{H}$ given by the integral ideal $\mathfrak{a}=[a,\frac{b+\sqrt{d}}{2}]$, $w_{i}$ is the number of roots of unity in $E_{i}$, and for an integral ideal $\mathfrak{a}$ of $F$
$$
\rho(\mathfrak{a}):=|\{\mathfrak{B}\subset\mathcal{O}_{E}:\,{\rm N}_{E/F}(\mathfrak{B})=\mathfrak{a}\}|=\prod_{\mathfrak{q}<\infty}\rho_{\mathfrak{q}}(\mathfrak{a})
$$
which can be computed via calculating its local factors $\rho_{\mathfrak{q}}(\mathfrak{a})$ defined by
\begin{equation}
\label{localrho}
\rho_{\mathfrak{q}}(\mathfrak{a})=\begin{cases}1&\text{if $\mathfrak{q}$ is ramified in $E$,}\\\frac{1+(-1)^{{\rm ord}_{\mathfrak{q}}\mathfrak{a}}}{2}&\text{if $\mathfrak{q}$ is inert in $E$,}\\1+{\rm ord}_{\mathfrak{q}}\mathfrak{a}&\text{if $\mathfrak{q}$ is split in $E$.}
\end{cases}
\end{equation}
\end{Theorem}
\begin{Remark}
To see how \eqref{equivgz} is equivalent to \eqref{GZj}, we refer the reader to \cite[Eq. (7.1)]{gz} and \cite[Remark~4.1]{yangyin}. Another way to obtain the Gross--Zagier CM value formula \eqref{equivgz} is to use the so-called small CM value formula due to Schofer \cite{S}, which is a generalization of \eqref{equivgz} to the average value of a Borcherds lift over a CM 0-cycle associated to an imaginary quadratic field.
\end{Remark}
As one will see in Section~2, the divisor of a Borcherds lift is a linear combination of some special divisors (see Subsection~\ref{specialdiv} for definitions) on a Shimura variety, which turns out to be a necessary condition for a meromorphic function on the Shimura variety to be a Borcherds lift. In addition, one will also see that $Y_{0}(N)\times Y_{0}(N)$ can be viewed as a Shimura variety $\mathcal{X}_{K_{N}}$ of dimension~2, and its diagonal can be interpreted as a special divisor. So the injectivity and surjectivity of a Hauptmodul for the genus zero Hecke subgroup $\Gamma_{0}(N)$ imply that its difference as a meromorphic function on $\mathcal{X}_{K_{N}}$ has a special divisor, which is potentially to be a Borcherds lift. All of these observations together with those interesting phenomena and relations mentioned above that are related to $j(\tau)$ have greatly motivated us to look into the genus zero Hecke subgroups $\Gamma_{0}(N)$ cases. Now recall that for a genus zero congruence subgroup $\Gamma$ of $\SL_{2}(\RR)$ commensurable with $\SL_{2}(\Z)$, the function field on $X(\Gamma)$ can be generated by a single modular function, and such function is called a Hauptmodul for $\Gamma$ if it has a unique simple pole of residue~1 at the cusp $i\infty$, i.e., it has a Fourier expansion of the form $q^{-1/h}+c(0)+c(1)q^{1/h}+\cdots$ with $q=\exp(2\pi i\tau)$ at the cusp $i\infty$ where $h$ is the width of the cusp $i\infty$. In this work, we first aim to extend the fact that $j(z_{1})-j(z_{2})$ is a Borcherds lift of type~$(2,2)$ to any genus zero groups $\Gamma_{0}(N)$ and show that the difference of a Hauptmodul $\pi_{N}(\tau)$ for a genus zero group $\Gamma_{0}(N)$, which is known \cite[Theorem 15, p.103]{Sc} to be for $N\leq10$ and $N\in\{12,13,16,18,25\}$, as a modular function for $\Gamma_{0}(N)\times \Gamma_{0}(N)$ is a Borcherds lift of type $(2,2)$ by explicitly constructing the lift input by a uniform approach.  These extend Scheithauer's results \cite{sch3}, in which he works only on $N$ square free using the twisted denominator identity of the Monster Lie algebra. The method we use is different from Scheithauer's and is more natural from the analytic point of view. Later, employing the big CM value formula \cite{bky} together with the fact that $\pi_{N}(z_{1})-\pi_{N}(z_{2})$ is a Borcherds lift shown in this work, we derive Gross--Zagier type CM value formulas for $\pi_{p}(\tau)$ for $p\in\{3,5,7,13\}$, the only odd primes such that $\Gamma_{0}(p)$ are of genus zero.


\begin{Remark}
The Hauptmoduls for $\Gamma_{0}(N)$ for $N\geq2$ are well known and have been constructed explicitly by eta-quotients. We record them here as a reference for the reader:
$$
 \begin{tabular}{|c | c| c |c | c| c | c | c | c | c| c | c| c|c|c|} 
 $N$ & $2$ & $3$ & $4$&$ 5$&$ 6 $&$ 7$ &$ 8$ & $9$ & $10$ & $12$ & $13$ &$16$&$18$&$25$\\
 $\pi_{N}(\tau)$ & $\frac{\eta_{1}^{24}}{\eta_{2}^{24}}$ & $\frac{\eta_{1}^{12}}{\eta_{3}^{12}}$ & $\frac{\eta_{1}^{8}}{\eta_{4}^{8}}$ & $\frac{\eta_{1}^{6}}{\eta_{5}^{6}}$ & $\frac{\eta_{3}\eta_{1}^{5}}{\eta_{2}\eta_{6}^{5}}$ & $\frac{\eta_{1}^{4}}{\eta_{7}^{4}}$ & $\frac{\eta_{1}^{4}\eta_{4}^{2}}{\eta_{2}^{2}\eta_{8}^{4}}$ & $\frac{\eta_{1}^{3}}{\eta_{9}^{3}}$ & $\frac{\eta_2 \eta_5^5}{\eta_1 \eta_{10}^5}$ &  $\frac{\eta _3^3 \eta _4}{\eta _1 \eta _{12}^3}$ & $\frac{\eta_{1}^{2}}{\eta_{13}^{2}}$ & $\frac{\eta _8 \eta _1^2}{\eta _2 \eta _{16}^2}$& $\frac{\eta_6 \eta_9^3}{\eta_3
                                        \eta_{18}^3}$&$\frac{\eta _1}{\eta _{25}}$
 \end{tabular}
 ,
 $$
 where $\eta_{m}:=\eta(m\tau)$, and $\eta(\tau)=q^{\frac{1}{24}}\prod_{n=1}^{\infty}(1-q^{n})$ denotes the Dedekind eta function.
 \end{Remark}


Now we state the first main result of this work.
\begin{Theorem}
\label{borthm}
Let $\pi_{N}(\tau)$ be a Hauptmodul for a genus zero group $\Gamma_{0}(N)$ for $N\geq2$. Then $\pi_{N}(z_{1})-\pi_{N}(z_{2})$ is a Borcherds lift $\Psi(z,F_{N})$ of type $(2,2)$ for some weakly holomorphic modular function $F_{N}$ for the Weil representation of ${\rm SL}_{2}(\Z)$.
\end{Theorem}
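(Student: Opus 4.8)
The plan is to exhibit $Y_{0}(N)\times Y_{0}(N)$ as an open part of an orthogonal Shimura variety of type $(2,2)$, to produce the input $F_{N}$ from the scalar Hauptmodul $\pi_{N}$ by a vector-valued lifting operator, and then to identify $\Psi(z,F_{N})$ with $\pi_{N}(z_{1})-\pi_{N}(z_{2})$ by comparing divisors. Concretely, on $V=M_{2}(\Q)$ with quadratic form $Q(X)=\det(X)$, of signature $(2,2)$, the group $\SL_{2}\times\SL_{2}$ acts by $(g_{1},g_{2})\cdot X=g_{1}Xg_{2}^{-1}$ and realizes the exceptional isogeny onto ${\rm SO}(V)$. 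I would fix an even lattice $L=L_{N}\subset V$ whose discriminant form is the natural ``level $N$'' finite quadratic module and whose discriminant kernel, up to the central $\pm1$, is $\Gamma_{0}(N)\times\Gamma_{0}(N)$ (for $N=1$, $L_{1}=M_{2}(\Z)$ is unimodular and one recovers Borcherds' set-up for $j$). Under the usual identification of the symmetric domain of ${\rm SO}(V)(\RR)$ with $\HH\times\HH$, a connected component of $X_{L_{N}}$ is $Y_{0}(N)\times Y_{0}(N)$, the $0$-dimensional Baily--Borel cusp corresponding to $(i\infty,i\infty)$ is distinguished, and each primitive Heegner divisor $Z(\mu,m)$ ($\mu\in L_{N}'/L_{N}$, $m>0$) becomes a Hecke-type modular correspondence. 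Since $\pi_{N}$ is a Hauptmodul, $\pi_{N}(z_{1})=\pi_{N}(z_{2})$ holds exactly when $z_{1}$ and $z_{2}$ are $\Gamma_{0}(N)$-equivalent, so the divisor of $\pi_{N}(z_{1})-\pi_{N}(z_{2})$ on $X_{L_{N}}$ equals the reduced diagonal $\Delta$ --- itself a Heegner divisor --- plus a first-order pole along each toroidal boundary divisor lying over $(i\infty,i\infty)$.

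Next I would build $F_{N}$. Writing $\pi_{N}(\tau)=q^{-1}+c_{N}(0)+\sum_{n\geq1}c_{N}(n)q^{n}$, the scalar weakly holomorphic modular function $\pi_{N}(\tau)-c_{N}(0)$ on $\Gamma_{0}(N)$ (trivial character, cusp width one at $i\infty$) should be lifted to a weakly holomorphic vector-valued modular form $F_{N}$ of weight $0$ for the Weil representation $\rho_{L_{N}}$ by a Bruinier--Bundschuh/Scheithauer type averaging over $\Gamma_{0}(N)\backslash\SL_{2}(\Z)$. One then checks that the component of $F_{N}$ attached to the coset cutting out $\Delta$ has vanishing constant term and principal part $q^{-1}$, and that the remaining principal-part coefficients $c_{F_{N}}(\mu,-m)$ with $m>0$ are nonnegative and supported precisely so that the Heegner divisor they prescribe is exactly $\Delta$; here one uses that a Hauptmodul has the smallest possible pole at $i\infty$ and is holomorphic at every other cusp of $\Gamma_{0}(N)$.

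Finally, Borcherds' theorem gives that $\Psi(z,F_{N})$ is a meromorphic modular form on $X_{L_{N}}$ of weight $c_{F_{N}}(0,0)/2=0$, with divisor the Heegner divisor prescribed by the principal part of $F_{N}$, namely $\Delta$, and with Borcherds product near $(i\infty,i\infty)$ of the expected leading shape $q_{1}^{-1}-q_{2}^{-1}+\cdots$ and with prescribed orders along the boundary. Thus $\Psi(z,F_{N})$ and $\pi_{N}(z_{1})-\pi_{N}(z_{2})$ are weight-zero meromorphic functions on the normal projective variety $\overline{X_{L_{N}}}$ with the same divisor, so their quotient is a nowhere-zero holomorphic function, hence constant; comparing leading Fourier coefficients at $(i\infty,i\infty)$ forces the constant to be $1$. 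I expect the crux to be the middle step: constructing $F_{N}$ and proving it is genuinely $\rho_{L_{N}}$-equivariant with exactly the right principal part for every genus-zero $N$ --- in particular for the non square free levels $N\in\{4,8,9,12,16,18,25\}$, where the discriminant form of $L_{N}$ has components $\Z/p^{k}\Z$ with $k\geq2$ and Scheithauer's denominator-identity method does not directly apply --- together with the bookkeeping that matches the boundary orders of the resulting Borcherds product with the first-order poles of $\pi_{N}(z_{1})-\pi_{N}(z_{2})$ along $\{z_{1}\in\text{cusps}\}$ and $\{z_{2}\in\text{cusps}\}$.
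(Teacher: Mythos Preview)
Your overall strategy matches the paper's: realize $Y_{0}(N)\times Y_{0}(N)$ as a type $(2,2)$ orthogonal Shimura variety for the lattice $L_{N}=\begin{pmatrix}\Z&\Z\\N\Z&\Z\end{pmatrix}$, construct $F_{N}$ by averaging $\pi_{N}$ over $\Gamma_{0}(N)\backslash\SL_{2}(\Z)$ (Scheithauer's induction), and compare $\Psi(z,F_{N})$ with $\pi_{N}(z_{1})-\pi_{N}(z_{2})$ by divisors.

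The one place where the paper does something more concrete than your sketch is the boundary step. You propose to match divisors on the full compactification $\overline{X_{L_{N}}}$ and invoke ``nowhere-zero holomorphic $\Rightarrow$ constant''. The paper avoids working on the $2$-dimensional compactification directly: instead it fixes $z_{2}\in\HH$, so that $g(z_{1})=\Psi(z,F_{N})/(\pi_{N}(z_{1})-\pi_{N}(z_{2}))$ becomes a meromorphic function on the compact curve $X_{0}(N)$, and then computes $\mathrm{ord}_{s}\Psi$ at \emph{every} cusp $s$ of $\Gamma_{0}(N)$ (not just $i\infty$) using the Borcherds product expansion at the corresponding isotropic line $\Q\ell_{s}$. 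This slicing argument buys you an elementary one-variable conclusion and sidesteps the toroidal bookkeeping you allude to.

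What makes that cusp-by-cusp computation go through is a fact your sketch does not mention and which is the real technical crux: after subtracting the correct constant vector-valued form from the naive induction $f_{N}$, the resulting $F_{N}$ satisfies
\[
\sum_{k=0}^{N/d-1}\sum_{j=0}^{d-1}c(0,\mu_{j\,N/d,\,kd})=24\quad\text{for every proper divisor }d\mid N,
\]
which the paper proves by Serre duality against $\tfrac{N}{d}E_{2}(\tfrac{N}{d}\tau)-E_{2}(\tau)$. This identity is exactly what forces the Weyl vector component $\rho_{\ell_{M_{s}}'}$ to vanish at every cusp $s$ with $m_{s}\ne N$, hence $\mathrm{ord}_{s}\Psi=0$ there, matching $\pi_{N}(z_{1})-\pi_{N}(z_{2})$; at cusps with $m_{s}=N$ one gets $\rho_{\ell_{M_{s}}'}=-1$, matching the simple pole. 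Without this $24$-identity your ``prescribed orders along the boundary'' remains a hope rather than a verification. Note also that the relevant boundary poles lie over \emph{all} cusps $(s,i\infty)$ and $(i\infty,s')$, not just over the single $0$-cusp $(i\infty,i\infty)$.
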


Define, here and throughout the remainder of this work, $\mathcal{C}(\Gamma_{1}(N))$ to be the set of inequivalent cusps $s={a_{s}}/{c_{s}}\in\mathbb{Q}$ of $\Gamma_{1}(N)$ with $(a_{s},c_{s})=1$, $M_{s}=\begin{pmatrix}a_{s}&b_{s}\\c_{s}&d_{s}\end{pmatrix}\in\SL_{2}(\mathbb{Z})$, $m_{s}=(c_{s},N)$, and write $h_{s}=N/m_{s}$.  Note that when $N\ne4$, $\Gamma_{1}(N)$ has no irregular cusps, and then $h_{s}=N/m_{s}$ is the width of cusp $s\in\mathcal{C}(\Gamma_{1}(N))$. If $s$ is regular write for $0\leq t<h_{s}$
$$
\left(\left.f\right|M_{s}\right)_{t}=\sum_{n\gg-\infty}A_{s}(nh_{s}+t)q^{(nh_{s}+t)/h_{s}}
$$
where
$$
\left.f\right|M_{s}=\sum_{n\gg-\infty}A_{s}(n)q^{n/h_{s}}\quad\mbox{and}\quad \left.f\right|M_{s}:=f\left(\frac{a_{s}\tau+b_{s}}{c_{s}\tau+d_{s}}\right).
$$
 Then we can derive a product representation for the Fourier expansion of $\pi_{N}(z_{1})-\pi_{N}(z_{2})$ at the cusp $(i\infty,i\infty)$ as follows, which can be viewed as analogues of the Monster denominator formula~\eqref{monst}.
\begin{Corollary}
\label{diffpiexp}
Let $\pi_{N}(\tau)$ be a Hauptmodul for a genus zero group $\Gamma_{0}(N)$
  and define for $d|N$,
\begin{align*}
&\sum_{\ell=-1}^{\infty}A(\ell,d)q^{\ell}\\
&=\frac{2}{\lambda_{2,N}|\Gamma_{0}(N):\Gamma_{1}(N)|}\left\{\sum_{\substack{s\in\mathcal{C}(\Gamma_{1}(N))\\\text{$s$ regular}\\m_{s}=d}}\left[\left(\left.\pi_{N}\right|M_{s}\right)_{0}-A_{s}(0)\right]+\sum_{\substack{s\in\mathcal{C}(\Gamma_{1}(N))\\\text{$s$ irregular}\\m_{s}=d}}\frac{1}{h_{s}}\left[\left.\pi_{N}\right|M_{s}-A_{s}(0)\right]\right\},
\end{align*}
where $\lambda_{2,N}=2$ or $1$ depending on whether $N=2$ or not.
Then we have
\begin{equation}
\label{prodana}
\pi_{N}(z_{1})-\pi_{N}(z_{2})=(q_{1}^{-1}-q_{2}^{-1})\prod_{m,n>0}\prod_{d|N}\left(1-\left(q_{1}^{m}q_{2}^{n}\right)^{\frac{N}{d}}\right)^{A(mn,d)}
\end{equation}
where $q_{j}=\exp(2\pi iz_{j})$.
\end{Corollary}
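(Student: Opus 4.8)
The plan is to read the product expansion off of Theorem~\ref{borthm} by feeding the identity $\pi_{N}(z_{1})-\pi_{N}(z_{2})=\Psi(z,F_{N})$ into Borcherds' product formula for a Borcherds lift near a cusp of the associated Shimura variety of type $(2,2)$ (see \cite[\S 13--14]{bor}, \cite{bor4}). Recall from the proof of Theorem~\ref{borthm} that the relevant even lattice $L_{N}$ of signature $(2,2)$ is a rescaling of $M_{2}(\Z)$ with the determinant quadratic form, chosen so that $\HH\times\HH$ is the Grassmannian of $L_{N}$, the natural action of $\Gamma_{0}(N)\times\Gamma_{0}(N)$ is compatible with that of $\mathrm{SO}^{+}(L_{N})$, and the cusp under consideration corresponds to $(i\infty,i\infty)\in Y_{0}(N)\times Y_{0}(N)$. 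Fixing a primitive isotropic vector adapted to this cusp together with a Weyl chamber $W$, Borcherds' theorem expresses $\Psi(z,F_{N})$ on $W$ as a constant times $\exp(2\pi i(\rho_{W},Z))$ times $\prod_{\lambda}(1-\exp(2\pi i(\lambda,Z)))^{c_{\lambda}(\lambda^{2}/2)}$, where $\lambda$ runs over the positive vectors of the relevant dual lattice, $\rho_{W}$ is the Weyl vector, and $c_{\lambda}(m)$ is the $m$-th Fourier coefficient of the $\lambda$-component of $F_{N}$.

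The second step is the bookkeeping with the discriminant form $L_{N}'/L_{N}$. Under the identification above, a positive vector $\lambda$ is parametrized by a triple $(m,n,d)$ with $m,n>0$ and $d\mid N$, namely by its residue class (which records a divisor $d\mid N$) together with the pair $(m,n)$; one checks that $(\lambda,Z)$ becomes $\frac{N}{d}(mz_{1}+nz_{2})$ and $\lambda^{2}/2$ becomes $mn$, so that the factor attached to $\lambda$ is $\bigl(1-(q_{1}^{m}q_{2}^{n})^{N/d}\bigr)^{c_{\lambda}(mn)}$ with $q_{j}=\exp(2\pi iz_{j})$. It then remains to recognize $c_{\lambda}(mn)$ as $A(mn,d)$. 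This is exactly the content of the Fourier expansion of $F_{N}$ computed in Lemma~\ref{lemFN}: since $F_{N}$ is obtained from $\pi_{N}$ by averaging its translates $\pi_{N}|M_{s}$ over the cusps $s\in\mathcal{C}(\Gamma_{1}(N))$, the Fourier coefficient of the component of $F_{N}$ indexed by the residue class attached to $d$ is precisely the quantity $A(\ell,d)$ defined in the statement of the corollary; the normalizing factor $\tfrac{2}{\lambda_{2,N}|\Gamma_{0}(N):\Gamma_{1}(N)|}$ and the separate treatment of regular and irregular cusps are the exact adjustments forced by passing from a scalar modular function on $X_{0}(N)$ to a vector-valued modular form for the Weil representation of $\SL_{2}(\Z)$.

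The third step pins down the leading factor. In a single Weyl chamber the Weyl vector $\rho_{W}$ contributes a monomial of the form $q_{1}^{-1}$ (respectively $q_{2}^{-1}$), and the finitely many negative-norm vectors $\lambda$ among the positive ones contribute wall-crossing factors which, once the principal part of $F_{N}$ at the corresponding residue class is substituted, assemble with that monomial into the antisymmetric combination $q_{1}^{-1}-q_{2}^{-1}$, exactly as for $j(z_{1})-j(z_{2})$; the overall multiplicative constant is then forced to be $1$ by comparison with the principal part $\pi_{N}(z_{1})-\pi_{N}(z_{2})=q_{1}^{-1}+O(1)$ as $z_{1}\to i\infty$ with $z_{2}$ fixed (and the mirror estimate as $z_{2}\to i\infty$). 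Assembling the three steps gives \eqref{prodana}, with the product taken over $m,n>0$ and $d\mid N$ as stated.

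The main obstacle I anticipate is the normalization and indexing bookkeeping of steps two and three: determining exactly which residue classes of $L_{N}'/L_{N}$ occur among the positive vectors $\lambda$ and checking that grouping them by the associated divisor $d\mid N$ reproduces the shape $\prod_{d\mid N}(1-(q_{1}^{m}q_{2}^{n})^{N/d})^{A(mn,d)}$ (in particular that the positivity condition on $\lambda$ is literally $m,n>0$ for every such $d$), and verifying that the Weyl vector together with the negative-norm contributions collapses to the clean prefactor $q_{1}^{-1}-q_{2}^{-1}$ with constant $1$ rather than some $N$-dependent scalar. The analytic substance --- that $\Psi(z,F_{N})$ possesses a Borcherds product at all and that $F_{N}$ has the stated Fourier expansion --- is already provided by Theorem~\ref{borthm} and Lemma~\ref{lemFN}, so the remaining task is to align the combinatorics of the discriminant form and the Weyl data with the right-hand side of \eqref{prodana}.
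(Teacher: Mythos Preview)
Your overall plan matches the paper's: both read the product off the Borcherds expansion of $\Psi(z,F_{N})$ at the cusp $(i\infty,i\infty)$ via Theorem~\ref{borthm}. However, your step two misidentifies the structure of the raw Borcherds product, and in doing so skips the one genuinely nontrivial algebraic step.

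At the cusp $(i\infty,i\infty)$ (that is, $s=1/N$, $m_{s}=N$, $h_{s}=1$), the Lorentzian lattice $M_{s}$ is unimodular, so the positive vectors $\lambda\in M_{s}'$ are parametrized by pairs $(m,n)$ alone, with $(\lambda,Z)=mz_{1}+nz_{2}$; there is no $d$ at this stage and no factor $N/d$. The inner product in Borcherds' formula then runs over the cosets $\mu_{j,0}\in L'_{N,\ell_{s}}/L_{N}$ for $j=0,\ldots,N-1$, each contributing the root of unity $e((\mu_{j,0},\ell'_{s}))=e(j/N)$. Thus what Borcherds' theorem actually hands you is
\[
(q_{1}^{-1}-q_{2}^{-1})\prod_{m,n>0}\;\prod_{j=0}^{N-1}\bigl(1-q_{1}^{m}q_{2}^{n}\,e(-j/N)\bigr)^{c(mn,\mu_{j,0})},
\]
not a product already indexed by divisors of $N$. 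The residue class records a $j\in\Z/N\Z$, not a divisor, and the exponent is $c(mn,\mu_{j,0})$, not $A(mn,d)$.

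The missing step, which the paper carries out explicitly, is the combinatorial regrouping. From \eqref{fnpi}--\eqref{F4} one reads off
\[
c(\ell,\mu_{j,0})=\sum_{\substack{d\mid N\\ d\mid j}}A(\ell,d),
\]
and then
\[
\prod_{j=0}^{N-1}\bigl(1-x\,e(-j/N)\bigr)^{c(\ell,\mu_{j,0})}
=\prod_{d\mid N}\;\prod_{j'=0}^{N/d-1}\bigl(1-x\,e(-j'd/N)\bigr)^{A(\ell,d)}
=\prod_{d\mid N}\bigl(1-x^{N/d}\bigr)^{A(\ell,d)},
\]
using the cyclotomic identity $\prod_{k=0}^{M-1}(1-x\zeta_{M}^{k})=1-x^{M}$. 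This is where the $d$-indexing and the exponents $N/d$ actually originate; your sketch asserts this final shape as if it fell out of the lattice parametrization directly, which it does not. Your ``main obstacle'' paragraph correctly senses that the bookkeeping of residue classes is the crux, but the resolution is this M\"obius-type inversion plus the roots-of-unity product, not a direct indexing by $d$.
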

Here are two concrete examples following from Corollary \ref{diffpiexp}.

\begin{Example}
\label{rrbor}
Let $\pi_{5}(\tau)$ be the Hauptmodul for $\Gamma_{0}(5)$ given by
$$
\pi_{5}(\tau)=\left(\frac{\eta(\tau)}{\eta(5\tau)}\right)^{6},
$$
where $\eta(\tau)$ is the Dedekind eta function. Then
$$
\left(\frac{\eta(z_{1})}{\eta(5z_{1})}\right)^{6}-\left(\frac{\eta(z_{2})}{\eta(5z_{2})}\right)^{6}=(q_{1}^{-1}-q_{2}^{-1})\prod_{m,n>0}(1-q_{1}^{m}q_{2}^{n})^{A(mn,5)}(1-q_{1}^{5m}q_{2}^{5n})^{A(mn,1)}.
$$
where
$$
\sum_{\ell=-1}^{\infty}A(\ell,5)q^{\ell}=\left(\frac{\eta(\tau)}{\eta(5\tau)}\right)^{6}+6,
$$
$$
A(\ell,1)=a(5\ell)\quad\mbox{and}\quad\sum_{\ell=0}^{\infty}a(\ell)q^{\ell}=125\left(\frac{\eta(5\tau)}{\eta(\tau)}\right)^{6}.
$$
\end{Example}

\begin{Example}
\label{gorgo}
Let $\pi_{8}(\tau)$ be the Hauptmodul for $\Gamma_{0}(8)$ given by
$$
\pi_{8}(\tau)=\frac{\eta(\tau)^{4}\eta(4\tau)^{2}}{\eta(2\tau)^{2}\eta(8\tau)^{4}}.
$$
  Then
$$
\frac{\eta(z_{1})^{4}\eta(4z_{1})^{2}}{\eta(2z_{1})^{2}\eta(8z_{1})^{4}}-\frac{\eta(z_{2})^{4}\eta(4z_{2})^{2}}{\eta(2z_{2})^{2}\eta(8z_{2})^{4}}=\left(q_{1}^{-1}-q_{2}^{-1}\right)\prod_{m,n>0}\prod_{d|8}\left(1-\left(q_{1}^{m}q_{2}^{n}\right)^{\frac{8}{d}}\right)^{A(mn,d)}.
$$
where
$$
\sum_{\ell=-1}^{\infty}A(\ell,8)q^{\ell}=\frac{\eta(\tau)^{4}\eta(4\tau)^{2}}{\eta(2\tau)^{2}\eta(8\tau)^{4}}+4,
$$
$$
\sum_{\ell=1}^{\infty}A(\ell,4)q^{\ell}=4-4\frac{\eta(\tau)^{4}\eta(8\tau)^{4}\eta(2\tau)^{2}}{\eta(4\tau)^{10}},
$$
$$
A(\ell,2)=a(2\ell)\quad\mbox{and}\quad \sum_{\ell=1}^{\infty}a(\ell)q^{\ell}=8-8e\left(1/12\right)\frac{\eta(2\tau)^{2}\eta(4\tau)^{4}}{\eta(8\tau)^{2}\eta\left(\tau+\frac{1}{4}\right)^{4}}=8-8\prod_{n=1}^{\infty}\frac{(1-q^{2n})^{2}(1-q^{4n})^{4}}{(1-q^{8n})^{2}(1-(iq)^{n})^{4}},
$$
$$
A(\ell,1)=b(8\ell)\quad\mbox{and}\quad \sum_{\ell=1}^{\infty}b(\ell)q^{\ell}=32\frac{\eta(8\tau)^{4}\eta(2\tau)^{2}}{\eta(4\tau)^{2}\eta(\tau)^{4}}.
$$
\end{Example}

\begin{Remark}
Similar formulas for $\Gamma_{0}^{+}(N)$ with $N$ square free have first been worked out in \cite{bor5} by Borcherds, where $\Gamma_{0}^{+}(N)$ is the discrete subgroup of ${\rm SL}_{2}(\mathbb{R})$ generated by $\Gamma_{0}(N)$ and all of its Atkin--Lehner involutions.  In recent work \cite{can}, Carnahan obtains similar
formulas for completely replicable modular functions.
\end{Remark}

Following from Corollary \ref{diffpiexp}, we can relate the canonical basis elements in $z_{2}$ of the space of weakly holomorphic modular functions for $\Gamma_{0}(N)$ with pole supported only at $i\infty$ to the logarithmic derivative of $\pi_{N}(z_{1})-\pi_{N}(z_{2})$ with respect to $z_{1}$ as follows.
\begin{Corollary}
\label{PN}
Let $\pi_{N}(z)$ be a Hauptmodul for a genus zero $\Gamma_{0}(N)$. Then
$$
-\frac{1}{2\pi i}\frac{\pi_{N}'(z_{1})}{\pi_{N}(z_{1})-\pi_{N}(z_{2})}=1+\sum_{n=1}^{\infty}P_{N,n}(\pi_{N}(z_{2}))q_{1}^{n}
$$
for some polynomial $P_{N,n}$ of degree $n$ and ${\rm Im}(z_{1})\gg{\rm Im}(z_{2})$. Then $P_{N,n}(\pi_{N}(z_{2}))=q_{2}^{-n}+O(q_{2})$.
\end{Corollary}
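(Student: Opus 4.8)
The plan is to expand the left-hand side, which I abbreviate $g(z_1):=-\frac{1}{2\pi i}\frac{\pi_N'(z_1)}{\pi_N(z_1)-\pi_N(z_2)}$ as a function of $z_1$ (with $z_2$ a parameter), as a power series in $q_1$ in two different ways: one expansion that displays each coefficient as a polynomial of degree $n$ in $\pi_N(z_2)$, and one, obtained by taking the logarithmic $z_1$-derivative of the product formula \eqref{prodana} of Corollary~\ref{diffpiexp}, that pins down the $q_2$-expansion of that coefficient.

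For the first expansion I write $\pi_N(\tau)=q^{-1}+\sum_{\ell\geq0}c(\ell)q^{\ell}$ and use $\frac{1}{2\pi i}\partial_{z_1}=q_1\partial_{q_1}$. For $\mathrm{Im}(z_1)$ large and $z_2$ in a fixed compact subset of $\HH$ one has $|\pi_N(z_1)|>|\pi_N(z_2)|$, so
$$
\frac{1}{\pi_N(z_1)-\pi_N(z_2)}=\sum_{k\geq0}\frac{\pi_N(z_2)^{k}}{\pi_N(z_1)^{k+1}}
$$
converges locally uniformly in $z_2$, giving $g(z_1)=-\sum_{k\geq0}\pi_N(z_2)^{k}\,\frac{1}{2\pi i}\frac{\pi_N'(z_1)}{\pi_N(z_1)^{k+1}}$. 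Here $\frac{1}{2\pi i}\frac{\pi_N'(z_1)}{\pi_N(z_1)}=q_1\partial_{q_1}\log\pi_N(z_1)=-1+O(q_1)$, while for $k\geq1$, using $\pi_N(z_1)^{-k}=q_1^{k}+O(q_1^{k+1})$,
$$
\frac{1}{2\pi i}\frac{\pi_N'(z_1)}{\pi_N(z_1)^{k+1}}=-\frac{1}{k}\,q_1\partial_{q_1}\!\bigl(\pi_N(z_1)^{-k}\bigr)=-q_1^{k}+O(q_1^{k+1}).
$$
Collecting powers of $q_1$: the $k=0$ term supplies the constant $1$, the $k=n$ term supplies $\pi_N(z_2)^{n}$, the terms $1\leq k\leq n-1$ supply a polynomial in $\pi_N(z_2)$ of degree $<n$ with coefficients depending only on the $c(\ell)$, and terms with $k>n$ contribute nothing to the coefficient of $q_1^n$. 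Hence $g(z_1)=1+\sum_{n\geq1}c_n(z_2)q_1^{n}$ with $c_n(z_2)=P_{N,n}(\pi_N(z_2))$ for a monic polynomial $P_{N,n}$ of degree exactly $n$; this yields the displayed identity and the degree assertion.

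For the second expansion I apply $-q_1\partial_{q_1}\log=-\frac{1}{2\pi i}\partial_{z_1}\log$ to \eqref{prodana}, valid in the same region. Writing the product there as $\prod_{r,s>0}\prod_{d\mid N}\bigl(1-(q_1^{r}q_2^{s})^{N/d}\bigr)^{A(rs,d)}$, the prefactor contributes
$$
-q_1\partial_{q_1}\log\bigl(q_1^{-1}-q_2^{-1}\bigr)=1+\sum_{k\geq1}q_1^{k}q_2^{-k},
$$
while the factor indexed by $(r,s,d)$ contributes $A(rs,d)\,\frac{N}{d}\,r\sum_{k\geq1}q_1^{krN/d}q_2^{ksN/d}$, every monomial of which carries a strictly positive power of $q_1$ and a strictly positive power of $q_2$. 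Comparing the coefficient of $q_1^{n}$ with the one found in the previous paragraph gives $P_{N,n}(\pi_N(z_2))=c_n(z_2)=q_2^{-n}+O(q_2)$, which is the remaining assertion.

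The one delicate point I anticipate is the analytic bookkeeping: one must check that the geometric-series expansion of $1/(\pi_N(z_1)-\pi_N(z_2))$ and the logarithmic $z_1$-derivative of \eqref{prodana} converge in a common region (say $\mathrm{Im}(z_1)$ large with $\mathrm{Im}(z_1)>\mathrm{Im}(z_2)$), so that the two $q_1$-expansions of $g(z_1)$ agree coefficientwise, and one must keep the normalization $\frac{1}{2\pi i}\partial_{z_1}=q_1\partial_{q_1}$ consistent throughout. Beyond that, the only input is the product expansion \eqref{prodana}, which is already established, so no new substantive ingredient is needed.
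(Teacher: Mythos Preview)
Your proof is correct and follows essentially the same route as the paper: the key step in both is to take the logarithmic $z_1$-derivative of the product expansion \eqref{prodana} and read off that the coefficient of $q_1^{n}$ equals $q_2^{-n}+O(q_2)$. You are simply more explicit than the paper about the first assertion, supplying the geometric-series argument that exhibits each $q_1^n$-coefficient as a monic degree-$n$ polynomial in $\pi_N(z_2)$; the paper leaves this point implicit.
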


\begin{Remark}
Corollary~\ref{PN} for cases $N\in\{2,3,5,7,13\}$ had been given in \cite{A}, and has recently been extended to arbitrary genus zero Fuchsian subgroups of ${\rm SL}_{2}(\mathbb{R})$ in \cite{Y19} by the author.
\end{Remark}

The second main results of this work are the Gross--Zagier type CM value formulas for $\pi_{p}(\tau)$ with $p\in\{3,5,7,13\}$ in the following, which, loosely speaking, follows from the so-called big CM value formula of Bruinier, Kudla and Yang \cite{bky}, and Theorem~\ref{borthm}.

{\begin{Theorem}
\label{gztype}
Let $E_{i}=\mathbb{Q}(\sqrt{d_{i}})$ be two imaginary quadratic fields of fundamental discriminants $d_{i}$ with $(d_{1},d_{2})=1$. Let $F=\mathbb{Q}(\sqrt{D})$ with $D=d_{1}d_{2}$ and $E=E_{1}E_{2}=\mathbb{Q}(\sqrt{d_{1}},\sqrt{d_{2}})$. Let $\pi_{p}(\tau)$ be a Hauptmodul of $\Gamma_{0}(p)$. Then
\begin{align}
&\sum_{([\mathfrak{a}_{1}],[\mathfrak{a}_{2}])\in S(p,d_{1},d_{2})}\log|\pi_{p}(\tau_{\mathfrak{a}_{1}}/p)-\pi_{p}(\tau_{\mathfrak{a}_{2}}/p)|\nonumber\\
\label{gzpi}
&=-\frac{|S(p,d_{1},d_{2})|w_{1}w_{2}}{32h(E_{1})h(E_{2})}\left(\sum_{\substack{t=\frac{2m+D+\sqrt{D}}{2}\\ |2m+D|<\sqrt{D}\\m\in\Z}}a\left(\frac{t}{\sqrt{D}},\phi_{0,0}\right)+\frac{24}{p-1}\sum_{k=1}^{p-1}a_{0}(\phi_{0,k})\right)
\end{align}
where 
$$
S(p,d_{1},d_{2}):=\left\{([\mathfrak{a}_{1}],[\mathfrak{a}_{2}])\in {\rm Cl}_{p}(E_{1})\times {\rm Cl}_{p}(E_{2})|\,\text{$[\mathfrak{a}_{i}]$ are representatives of ${\rm Cl}_{p}(E_{i})$ with ${\rm N}(\mathfrak{a}_{1})={\rm N}(\mathfrak{a}_{2})$}\right\},
$$
${\rm Cl}_{p}(E_{i})$ denote the ring class group of conductor $p$ of $E_{i}$, $w_{i}$ is the number of units of $E_{i}$, $\tau_{\mathfrak{a}}=p\frac{b+\sqrt{d}}{2a}$ is the CM point associated to the integral ideal representative $\mathfrak{a}=[a,p\frac{b+\sqrt{d}}{2}]$ of ${\rm Cl}_{p}(E_{i})$, and $a\left(\frac{t}{\sqrt{D}},\phi_{0,0}\right)$ and $a_{0}(\phi_{0,k})$ are computed and expressed explicitly in Section~6. In particular, when $\left(\frac{d_{1}}{p}\right)=\left(\frac{d_{2}}{p}\right)=1$ and $p\mathcal{O}_{F}=\mathfrak{p}_{1}\mathfrak{p}_{2}$, one has
$$
a\left(\frac{t}{\sqrt{D}},\phi_{0,0}\right)=-4{\frac{p^{2}}{(p-1)^{2}}}\sum_{i=0}^{p-1}\sum_{\text{$\mathfrak{p}$ inert in $E/F$}}\frac{1+{\rm ord}_{\mathfrak{p}}(t)}{2}\log(N(\mathfrak{p}))\prod_{\mathfrak{q}\nmid p}\rho_{\mathfrak{q}}(t\mathfrak{p}^{-1})\prod_{j=1}^{2}\frac{W_{t,\mathfrak{p}_{j}}^{\psi_{F}'}(0,\phi_{\mathfrak{p}_{j}}^{(i)})}{\gamma(W_{\mathfrak{p}_{j}}')},
$$
where the values of $\frac{W_{t,\mathfrak{p}_{j}}^{\psi_{F}'}(0,\phi_{\mathfrak{p}_{j}}^{(i)})}{\gamma(W_{\mathfrak{p}_{j}}')}$ are given in Corollary~\ref{evalw}, and
$$
a_{0}(\phi_{0,k})=-h(E_{1})h(E_{2})\frac{4\log{p}}{p-1}.
$$

Moreover, the left hand side of \eqref{gzpi} can be reformulated in the language of quadratic forms
$$
\sum_{([\mathfrak{a}_{1}],[\mathfrak{a}_{2}])\in S(p,d_{1},d_{2})}\log|\pi_{p}(\tau_{\mathfrak{a}_{1}}/p)-\pi_{p}(\tau_{\mathfrak{a}_{2}}/p)|=\sum_{(Q_{1},Q_{2})\in S_{\mathcal{Q}}(p,d_{1},d_{2})}\log|\pi_{p}(\tau_{Q_{1}})-\pi_{p}(\tau_{Q_{2}})|,
$$
where 
\begin{align*}
&S_{\mathcal{Q}}(p,d_{1},d_{2})\\
&:=\left\{\left.([Q_{1}],[Q_{2}])\in\mathcal{Q}_{d_{1}}(p)/\Gamma_{0}(p)\times\mathcal{Q}_{d_{2}}(p)/\Gamma_{0}(p)\right|\, \mbox{$[Q_{i}]$ are representatives with $Q_{1}(1,0)=Q_{2}(1,0)$}\right\},
\end{align*}
$\mathcal{Q}_{d}(p)$ denotes the set of primitive and positive definite binary quadratic forms $aX^{2}+bXY+cY^{2}$ of discriminant $d$ with $(a,p)=1$, and $\tau_{Q}$ is the unique CM point in $\mathbb{H}$ given by $Q(\tau,1)=0$ for a quadratic form $Q(X,Y)$.
\end{Theorem}
}

\begin{Remark}
The way we compute the Gross--Zagier type CM value formulas above in this work was first initiated in \cite{yangyin} in which Yang and Yin prove a Gross--Zagier type CM value formula for $\pi_{2}(\tau)$ first conjectured by Yui and Zagier \cite{yz}. In general, one can similarly obtain Gross--Zagier type CM value formulas for all $\pi_{N}(\tau)$ by carefully computing the constant terms of the corresponding theta-lift input and relevant local Whittaker functions (see Lemma~\ref{lemFN} and Subsection~\ref{bigcmfor}). We refer the reader to \cite{yangyin,yyy} for comprehensive descriptions of these computations, and leave the details to the reader.
\end{Remark}

\begin{Remark}
Similar to the remarks given on \cite[p. 352]{KRY}, one may note that the first term inside the parenthesis of the right hand side of \eqref{gzpi} can be interpreted as the $D$-th Fourier coefficient of the product of theta function $\theta(\tau)=\sum_{n\in\mathbb{Z}}q^{n^{2}}$ and the derivative of a non-holomorphic weight~1 Hilbert Eisenstein series $E^{*'}(4\tau^{\Delta},0,\phi_{0,0})$ at diagonal (see Section~5 for definitions).
\end{Remark}

\begin{Example}
\label{exam1}
Taking $p=3$, $d_{1}=-8$ and $d_{2}=-11$, one can first check using the identifications with quadratic forms  that $S(3,-8,-11)={\rm Cl}_{3}(E_{1})\times {\rm Cl}_{3}(E_{2})$. Since $D=88$ and $\left(\frac{-8}{3}\right)=\left(\frac{-11}{3}\right)=1$, then one can check that the $t$'s that we need to consider are 
$$
\frac{\pm8+\sqrt{88}}{2},\quad\frac{\pm6+\sqrt{88}}{2},\quad \frac{\pm4+\sqrt{88}}{2},\quad \frac{\pm2+\sqrt{88}}{2},\quad \mbox{and}\quad\frac{\sqrt{88}}{2},
$$
and thus, using the given formula together with Corollary~\ref{evalw}, one has that
\begin{align*}
&a\left(\frac{8+\sqrt{88}}{2\sqrt{88}},\phi_{0,0}\right)=a\left(\frac{6+\sqrt{88}}{2\sqrt{88}},\phi_{0,0}\right)=a\left(\frac{2+\sqrt{88}}{2\sqrt{88}},\phi_{0,0}\right)\\
&=a\left(\frac{\sqrt{88}}{2\sqrt{88}},\phi_{0,0}\right)=a\left(\frac{-6+\sqrt{88}}{2\sqrt{88}},\phi_{0,0}\right)=0,
\end{align*}
$$
a\left(\frac{4+\sqrt{88}}{2\sqrt{88}},\phi_{0,0}\right)=a\left(\frac{-4+\sqrt{88}}{2\sqrt{88}},\phi_{0,0}\right)=a\left(\frac{-8+\sqrt{88}}{2\sqrt{88}},\phi_{0,0}\right)=-4\log2,
$$
$$
a\left(\frac{-2+\sqrt{88}}{2\sqrt{88}},\phi_{0,0}\right)=-4\log7, \quad a_{0}(\phi_{0,k})=-2\log3,\quad\mbox{and}\quad -\frac{|S(p,d_{1},d_{2})|w_{1}w_{2}}{32h(E_{1})h(E_{2})}=-\frac{1}{2}.
$$
Incorporating all of these, one can obtain the following prime factorization for the product of CM values
$$
\prod_{([\mathfrak{a}_{1}],[\mathfrak{a}_{2}])\in {\rm Cl}_{3}(E_{1})\times {\rm Cl}_{3}(E_{2})}|\pi_{3}(\tau_{\mathfrak{a}_{1}}/3)-\pi_{3}(\tau_{\mathfrak{a}_{2}}/3)|=2^{6}3^{24}7^{2}.
$$
\end{Example}


\begin{Remark}
We remark that formulas analogous to \eqref{gzpi} have been recently established by a different method by the author in \cite{Y20}. Also, Gross--Zagier type CM value formulas for the genus zero Fricke subgroups $\Gamma_{0}^{+}(p)$ have been recently derived in another work of the author \cite{Y} using the so called small CM value formula \cite{S}. We refer the reader to \cite{bky} for a brief explanation on the distinction between the concepts related to ``big CM'' and ``small CM'' and to \cite{Y} for a brief explanation on why the small CM value formula may not work in our case.

\end{Remark}

This work is divided into two parts and is organized as follows. In the first part, we briefly review the  theory of Borcherds lifts, realize a family of Shimura varieties as degenerate Hilbert modular surfaces, set up several preliminary results and construct the desired weakly holomorhic modular form $F_{N}$ for the Weil representation. Proofs of Theorem \ref{borthm}, Corollaries \ref{diffpiexp} and \ref{PN} are given in Section~4. In the second part of this work, we will briefly review the concepts of big CM cycles and big CM value formula and show how to employ such formula together with Theorem \ref{borthm} to obtain Theorem \ref{gztype} in Section~5. Computations of  $a(\frac{t}{\sqrt{D}},\phi_{0,0})$ and $a_{0}(\phi_{0,k})$ mentioned in Theorem~\ref{gztype} are carried out in Section~6.


\part{Difference of a Hauptmodul for $\Gamma_{0}(N)$}
\section{Review of Borcherds Lifts}
\label{borlif}

In this section, we briefly review the theory of Borcherds lifts in the adelic setting and relevant concepts such as Shimura varieties and special divisors, and we also realize a family of Shimura varieties as degenerate Hilbert modular surfaces on which the difference of Hauptmoduls are defined. We rely heavily on \cite{K} (also see \cite{yangyin}).

\subsection{Shimura Variety of Type~$(n,2)$} For a positive integer $n$, let $V$ be a rational quadratic space with a quadratic form $Q(\cdot)$ of signature $(n,2)$ and associated bilinear form $(\cdot,\cdot)$. 
Let $H=\mbox{GSpin}(V)$ be the general spin group of $V$, then there is an exact sequence
$$
1\rightarrow\mathbb{G}_{m}\rightarrow H\rightarrow \mbox{SO}(V)\rightarrow 1.
$$
  For a $\mathbb{Q}$-algebra $F$, we write  $V_{F}$ for $V\otimes_{\Q}F$. A Hermitian symmetric domain for $H(\RR)$ is the Grassmannian of oriented negative~2-planes of $V_{\RR}$, denoted by $\mathbb{D}$.
Denote by $\mathbb{A}_{\mathcal{K}}$ the adele ring over a number field $\mathcal{K}$ and by $\mathbb{A}_{\mathcal{K},f}$ the associated finite adele ring.  For a compact open subgroup $K\subset H(\A_{\Q,f})$, there is an associated (open) Shimura variety $\mathcal{X}_{K}$ over $\Q$ such that
   $$\mathcal{X}_{K}(\C)=H(\Q)\backslash\left(\D\times H(\A_{\mathbb{Q},f})/K\right).$$
    Let
  $$
  \mathcal{L}=\{Z\in V_{\C}|\,\mbox{$(Z,Z)=0$ and $(Z,\bar{Z})<0$}\}.
  $$
Then we can see that $\D$ possesses a complex structure via the isomorphism $pr:\mathcal{L}/\C^{*}\to\D$ sending $Z=X+iY$ to $\RR X+\RR(-Y)$. There is another useful realization for $\D$ as follows. Take two isotropic (zero-norm) elements $\ell$ and $\ell'$ of $V$ with $(\ell,\ell')=1$, and let $V_{0}=(\mathbb{Q}\ell+\mathbb{Q}\ell')^{\perp}$ be the orthogonal complement of the plane spanned by $\ell$ and $\ell'$.
 Then we define a so-called tube domain associated to $\ell$ and $\ell'$ by
$$
\mathcal{H}=\{Z=X+iY\in V_{0,\C}|\,\text{$X,Y\in V_{0,\RR}$ and $Q(Y)<0$}\}
$$
which is isomorphic to $\mathcal{L}/\C^{*}$ via $w(Z)=\ell'-Q(Z)\ell+Z$. 
Then the map $w$ induces an action of $\Gamma=K\cap H(\Q)^{+}$ on $\mathcal{H}$, where $H(\Q)^{+}=H(\Q)\cap H(\RR)^{+}$ and $H(\RR)^{+}$ is the identity component of $H(\RR)$, and induces an automorphy factor $j(g,Z)$ characterized by the following identity
$$
g\cdot w(Z)=\nu(g)j(g,Z)w(g\cdot Z),
$$ 
where $\nu(g)$ is the spinor norm of $g$. Note that this action preserves the two connected components $\mathcal{H}^{\pm}$ of $\mathcal{H}$. Fix one of these two connected components, say, $\mathcal{H}^{+}$. Assuming for simplicity that $H(\mathbb{A}_{\mathbb{Q}})=H(\mathbb{Q})H(\mathbb{R})^{+}K$, which is guaranteed by an appropriate choice of $K$ and the strong approximation theorem (see, e.g., \cite[Ch. 7]{PR}), we have the identification $\mathcal{X}_{K}\cong \Gamma\backslash\mathcal{H}^{+}$.

\begin{Definition}
A meromorphic modular form on $\mathcal{H}^{+}$ of weight $k$ for $\Gamma$ is a meromorphic function $f:\mathcal{H}^{+}\to\mathbb{C}$ such that
$
f(\gamma\cdot Z)=j(\gamma,Z)^{k}f(Z)
$
for all $\gamma\in\Gamma$. Moreover, under the identification given above, such a function $f$ can be  equivalently viewed as a meromorphic modular form defined on $\mathbb{D}\times H(\mathbb{A}_{\Q,f})$ satisfying 
\begin{enumerate}[(1)]
\item{$f(Z,hk)=f(Z,h)$ for all $k\in K$,}
\item{$f(\gamma\cdot{Z},\gamma{h})=j(\gamma,z)^{k}f(Z,h)$ for all $\gamma\in H(\mathbb{Q})$.}
\end{enumerate}
\end{Definition}

\subsection{Special Divisor}\label{specialdiv} For a vector $X\in V$ with $Q(X)>0$ and $h\in H(\mathbb{A}_{\Q,f})$, let
$$
H_{X}=\{g\in H|\,g\cdot X=X\},\quad\mathbb{D}_{X}=\{Z\in\mathbb{D}|\,(Z,X)=0\},\quad\mbox{and}\quad K_{X,h}=H_{X}(\mathbb{A}_{\Q,f})\cap hKh^{-1}.
$$
Then the map
$$
H_{X}(\Q)\backslash\left(\mathbb{D}_{X}\times H_{X}(\mathbb{A}_{\Q,f})/K_{X,h}\right)\to \mathcal{X}_{K}(\mathbb{C}),\quad [Z,g]\to[Z,gh]
$$
gives a divisor $Z(X,h)$ in $\mathcal{X}_{K}$ which is defined over $\Q$. For a positive rational number $m$ and $\varphi\in\mathcal{S}(V_{\mathbb{A}_{\Q,f}})^{K}$, the $K$-invariant subspace of the  Schwartz--Bruhat space of $V_{\mathbb{A}_{\Q,f}}$ consisting of compactly supported and locally constant functions on $V(\mathbb{A}_{\Q,f})$, if there is a $X\in V$ with $Q(X)=m$, the special divisor of index $(m,\varphi)$ in $\mathcal{X}_{K}$ is defined by
$$
Z(m,\varphi)=\sum_{h\in H_{X}(\Q)\backslash H(\mathbb{A}_{\Q,f})/K}\varphi(h^{-1}\cdot X)Z(X,h).
$$
When there is no such $X$, we simply set $Z(m,\varphi)=0$. One can check that this definition does not depend on the choice of $X$ and $h$ (see e.g., \cite[Lemma 4.2.1]{Z}). Moreover, $Z(m,\varphi)$ can be described explicitly  \cite[Proposition 5.4]{Ku} as a divisor of $\Gamma\backslash\mathcal{H}^{+}$ as follows:
$$
Z(m,\varphi)=\sum_{X\in\Gamma\backslash\Omega_{m}(\mathbb{Q})}\varphi(X){\rm pr}(\mathbb{D}_{X})
$$
where $\Omega_{m}(\mathbb{Q})=\{X\in V|\,Q(X)=m\}$ and ${\rm pr}:\mathcal{H}^{+}\to\Gamma\backslash\mathcal{H}^{+}$ is the natural projection.

\subsection{Theta-lift and Borcherds Theorem}\label{thetalift}
For $Z\in\mathbb{D}$, let ${\rm pr}_{Z}:V_{\mathbb{R}}\to Z$ be the projection map, and for $X\in V_{\mathbb{R}}$, let $R(X,Z)=-({\rm pr}_{Z}(X),{\rm pr}_{Z}(X))$. Then we define 
$$
(X,X)_{Z}=(X,X)+2R(X,Z),
$$
and our Gaussian for $V$ is the function
$$
\varphi_{\infty}(X,Z)=e^{-\pi(X,X)_{Z}}.
$$
For $\tau\in\mathbb{H}$ with $\tau=u+iv$, let
$$
g_{\tau}=\begin{pmatrix}1&u\\0&1\end{pmatrix}\begin{pmatrix}v^{\frac{1}{2}}&0\\0&v^{-\frac{1}{2}}\end{pmatrix},
$$
and $g_{\tau}'=(g_{\tau},1)\in {\rm Mp}_{2}(\mathbb{R})$, the metaplectic group. Let  $G={\rm SL}_{2}$ and $\omega$ be the Weil representation of  $G(\mathbb{A}_{\Q})$ on $\mathcal{S}(V_{\mathbb{A}_{\Q,f}})$. Then for the linear action of $H(\mathbb{A}_{\Q,f})$ we write $\omega(h)\varphi(X)=\omega(h^{-1}\cdot X)$ for $\varphi\in\mathcal{S}(V_{\mathbb{A}_{\Q,f}})$. For $Z\in\mathbb{D}$ and $h\in H(\mathbb{A}_{\Q,f})$, we have the linear functional on $\mathcal{S}(V_{\mathbb{A}_{\Q,f}})$ given by
\begin{equation}
\label{sietheta}
\varphi\to\theta(\tau,Z,h;\varphi):=v^{-\frac{n}{4}+\frac{1}{2}}\sum_{X\in V_{\mathbb{Q}}}\omega(g_{\tau}')\left(\varphi_{\infty}(\cdot,Z)\otimes\omega(h)\varphi\right)(X).
\end{equation}

Let $L$ be an even lattice of $V$, i.e., $Q(X)\in\mathbb{Z}$ for all $X\in L$, and let $L'$ be the dual lattice of $L$ defined by
$$
L'=\{X\in V|\,(X,L)\subset\mathbb{Z}\}.
$$
Embed ${\rm SL}_{2}(\mathbb{Z})$ into ${\rm SL}_{2}(\hat{\mathbb{Z}})$ diagonally, and let $\mathcal{S}_{L}$ be the subspace of $\mathcal{S}(V_{\mathbb{A}_{\Q,f}})$ consisting of functions with support in $\hat{L}'$ and constant on cosets of $\hat{L}$, where $\hat{L}=L\otimes_{\mathbb{Z}}\hat{\mathbb{Z}}$. Then 
$$
\mathcal{S}_{L}=\bigoplus_{\eta\in L'/L}\mathbb{C}\phi_{\eta},\quad \phi_{\eta}={\rm Char}(\eta+\hat{L}),
$$
where ${\rm Char}(\cdot)$ denotes the characteristic function associated to a certain set. One can check that $\mathcal{S}_{L}$ is ${\rm SL}_{2}(\Z)$-invariant under the Weil representation $\omega$, and we denote this representation by $\omega_{L}$. Explicitly, one has
\begin{align*}
\omega_{L}(T)\phi_{\mu}&=e(-Q(\mu))\phi_{\mu},\\
\omega_{L}(S)\phi_{\mu}&=\frac{e((n-2)/8)}{\sqrt{|L'/L|}}\sum_{\gamma\in L'/L}e((\gamma,\mu))\phi_{\gamma}
\end{align*}
where $e(\tau):=\exp(2\pi i\tau)$, $T=\begin{pmatrix}1&1\\0&1\end{pmatrix}$ and $S=\begin{pmatrix}0&-1\\1&0\end{pmatrix}$.
\begin{Definition}
A holomorphic function ${F}:\mathbb{H}\to \mathcal{S}_{L}$ is a weakly holomorphic modular form of integral weight $k$ for the Weil representation $\omega_{L}$ if
\begin{enumerate}[(i)]
\item{${F}(\gamma\tau)=(c\tau+d)^{k}\omega_{L}(\gamma){F}(\tau)$ for all $\gamma=\begin{pmatrix}a&b\\c&d\end{pmatrix}\in{\rm SL}_{2}(\mathbb{Z})$,}
\item{${F}(\tau)$ has a Fourier expansion
$$
{F}(\tau)=\sum_{\eta\in L'/L}\sum_{\substack{m\in -Q(\eta)+\mathbb{Z}\\m\gg-\infty}}c(m,\eta)q^{m}\phi_{\eta}
$$
where the condition $m\equiv -Q(\eta)\pmod{\mathbb{Z}}$ follows from the transformation law for $T$.
}
\end{enumerate}
Furthermore, denote by $M_{k,\omega_{L}}^{!}$ the space of weakly holomophic modular forms of weight $k$ for the Weil representation $\omega_{L}$.
\end{Definition}
For the theta function called Siegel theta function (see \eqref{sietheta} for the definition of the summands)
$$
\theta(\tau,Z,h)=\sum_{\mu\in L'/L}\theta(\tau,Z,h;\phi_{\mu}),
$$
which is indeed of weight~$\frac{n}{2}-1$, we can pair it with the weight~$1-\frac{n}{2}$ modular form ${F}(\tau)$ by the following  $\mathbb{C}$-bilinear pairing
$$
\langle {F}(\tau),\theta(\tau,Z,h)\rangle=\sum_{\mu\in L'/L}\sum_{m\in Q(\mu)+\mathbb{Z}}c(m,\mu)q^{m}\theta(\tau,Z,h;\phi_{\mu}).
$$
Using this pairing, we define a regularized integral as in \cite{bor}, called theta-lift,
$$
\Phi(Z,h;{F}):=\underset{s=0}{{\rm CT}}\left\{\lim_{t\to\infty}\int_{\mathcal{F}_{t}}\langle {F}(\tau),\theta(\tau,Z,h)\rangle v^{-2-s}dudv\right\}
$$
where $\underset{s=0}{{\rm CT}}$ denotes the constant term in the Laurent expansion at $s=0$ of 
$$
\lim_{t\to\infty}\int_{\mathcal{F}_{t}}\langle {F}(\tau),\theta(\tau,Z,h)\rangle v^{-2-s}dudv,
$$
$\mathcal{F}_{t}$ is the truncated fundamental domain defined by
$$
\mathcal{F}_{t}:=\{\tau\in\mathcal{F}|\, {\rm Im}(\tau)\leq t\}
$$
and $\mathcal{F}$ is the usual fundamental domain for the action of ${\rm SL}_{2}(\mathbb{Z})$ on $\mathbb{H}$.

Now we are ready to state the celebrated Borcherds Theorem. Under the assumption $H(\mathbb{A}_{\mathbb{Q}})=H(\mathbb{Q})H(\mathbb{R})^{+}K$, we can simply set $h=1$ and omit it for the simplicity of notation.
\begin{Theorem}[Borcherds]
\label{bors}
For a given $f\in M_{1-\frac{n}{2},\omega_{L}}^{!}$ with
$$
f(\tau)=\sum_{\mu\in L'/L}\sum_{\substack{m\in-Q(\mu)+\Z\\m\gg-\infty}}c(m,\mu)q^{m}\phi_{\mu},
$$
and  $c(m,\mu)\in\Z$ for $m<0$,  and  assuming $\Gamma$ is the stabilizer subgroup of $f$ as an element of $\mathbb{C}[L'/L]$, there is a meromorphic modular form $\Psi(Z,f)$ called Borcherds lift on $\mathbb{D}\times H(\mathbb{A}_{\Q,f})$ (or say  on $\mathcal{H}^{+}$) of weight $c(0,0)/2$ for $\Gamma$ such that
\begin{enumerate}
\item{
the divisor of $\Psi(Z,f)$ on $\mathcal{X}_{K}$ is given by
$$
{\rm div}(\Psi(Z,f)^2)=\sum_{\mu\in L'/L}\sum_{\substack{m\in Q(\mu)+\Z\\0<m}}c(-m,\mu)Z(m,\mu),
$$
where $Z(m,\mu)$ is the special divisor of index $(m,\mu)$ as defined in Subsection~\ref{specialdiv}, 
}
\item{the following relation 
$$
\Phi(Z,f)=-4\log|\Psi(Z,f)|-c(0,0)\left(2\log|Y|+\Gamma'(1)+\log(2\pi)\right)
$$
holds,}
\item{
near each cusp $\Q\ell$ of $\mathcal{X}_{K}$, the meromorphic function $\Psi(Z,f)$ has a product expansion called Borcherds product of the form
$$
\Psi(Z,f)=Ce\left((Z,\rho(W_{f,\ell_{M}},f)\right)\prod_{\substack{\lambda\in M_{\ell}'\\(\lambda,W_{f,\ell_{M}})>0}}\prod_{\substack{\mu\in L_{\ell}'/L\\ p(\mu)=\lambda+M_{\ell}}}\left[1-e\left((\lambda,Z)+(\mu,\ell')\right)\right]^{c(-Q(\mu),\mu)}
$$
where $C$ is a constant with absolute value
$$
\left|\prod_{\substack{\delta\in\Z/N\\\delta\ne0}}\left(1-e(\delta/N)\right)^{\frac{c(0,\frac{\delta}{N}\ell)}{2}}\right|.
$$
 For the definitions of the notation, we refer the reader to \cite[Subsection 2.1]{yangyin}. 
}
\end{enumerate}
\end{Theorem}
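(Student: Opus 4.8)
\emph{Proof proposal.} The plan is to evaluate the regularized theta integral $\Phi(Z,h;f)$ explicitly by a Rankin--Selberg type unfolding, following Borcherds' method, and to read off from the resulting closed formula all four assertions. Using the reduction $H(\mathbb{A}_{\mathbb{Q}})=H(\mathbb{Q})H(\mathbb{R})^{+}K$ we set $h=1$ throughout. Fix a primitive isotropic $\ell\in L$ together with $\ell'$ as in Section~\ref{borlif}, giving the tube-domain coordinate $Z=X+iY\in\mathcal{H}^{+}$ and a cusp $\mathbb{Q}\ell$ of $X_{K}$. Attached to $\ell$ one has the Lorentzian lattice $M=(L\cap\ell^{\perp})/\mathbb{Z}\ell$ of signature $(n-1,1)$, its sublattices $L_{\ell}\supset M_{\ell}$, and the projection $p$ appearing in assertion (3). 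The first step is to split the Siegel theta function $\theta(\tau,Z,1)=\sum_{\mu}\theta(\tau,Z,1;\phi_{\mu})$ of $L$ with respect to the hyperbolic plane $\mathbb{Q}\ell+\mathbb{Q}\ell'$: Poisson summation in the $\ell$-direction rewrites $\theta_{L}$ in terms of the Lorentzian theta function $\theta_{M}$ of $M$, twisted by exponentials $e(\cdot)$ in the tube coordinate $Z$ and weighted by a power of $Q(Y)$.

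Substituting this splitting into the pairing $\langle f(\tau),\theta(\tau,Z,1)\rangle$ and carrying out the regularization $\underset{s=0}{{\rm CT}}\lim_{t\to\infty}\int_{\mathcal{F}_{t}}(\cdot)\,v^{-2}\,du\,dv$, one performs a Rankin--Selberg unfolding: a further Poisson summation on the Lorentzian lattice $M$, combined with the $\Gamma_{\infty}$-periodicity, replaces the integral over the fundamental domain $\mathcal{F}$ by an integral over a strip $\{0\le u\le 1\}$; this is the step at which \emph{weak} holomorphy of $f$ is essential, so that its principal part is the finite Laurent tail $\sum_{n<0}c(n,\mu)q^{n}$ and the unfolded integrals converge after the $s$-regularization. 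The unfolded strip integral then separates into: (i) the $\lambda=0$ contribution, proportional to $c(0,0)$, which produces exactly the term $-c(0,0)\bigl(2\log|Y|+\Gamma'(1)+\log(2\pi)\bigr)$ together with the Weyl-vector linear term $e((Z,\rho(W_{f,\ell_{M}},f)))$; and (ii) for each nonzero $\lambda$ in the dual of $M$, an exponential integral of incomplete-Gamma / Bessel-$K$ type. The point of the computation is that, collecting the Bessel sums over all $\lambda$ in a fixed Weyl chamber and applying $\sum_{k\ge 1}\tfrac1k e(kw)=-\log(1-e(w))$, the total reassembles into $-4\log\Bigl|\prod_{\lambda>0}\prod_{\mu,\;p(\mu)=\lambda}\bigl(1-e((\lambda,Z)+(\mu,\ell'))\bigr)^{c(-Q(\mu),\mu)}\Bigr|$; convergence of this product in the chamber follows from the subexponential growth $c(n,\mu)=O(e^{C\sqrt{n}})$ of the Fourier coefficients of $f$. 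This yields assertion (2), $\Phi(Z,1;f)=-4\log|\Psi(Z,f)|-c(0,0)(2\log|Y|+\Gamma'(1)+\log(2\pi))$, and evaluating the constant of integration in the unfolded formula gives the constant $C$ with the stated absolute value, which together with the product just obtained is assertion (3).

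The remaining assertions follow formally from (2). For (1): the theta lift $\Phi(Z,1;f)$ is real-analytic away from the special divisors, and the $s$-regularization isolates, near $Z(n,\mu)$ with $n>0$ and $c(-n,\mu)\ne 0$, a singular term of the form $-c(-n,\mu)\,\delta(\mu)\sum_{X\in\mu+L,\;Q(X)=n}\log|(X,Z)|^{2}$; comparing this logarithmic singularity with that of $-4\log|\Psi(Z,f)|$ on the other side of (2) forces ${\rm div}(\Psi(Z,f)^{2})=\sum_{n>0}c(-n,\mu)\delta(\mu)Z(n,\mu)$, and in particular shows $\Psi$ extends to a genuinely meromorphic function, since in a Weyl chamber it is visibly the convergent holomorphic product in (3) — here the integrality hypothesis $c(n,\mu)\in\mathbb{Z}$ for $n<0$ is exactly what makes the exponents in that product integral, hence $\Psi$ single-valued rather than a section with a multiplier. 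For modularity: the hypothesis $\Gamma\subset{\rm Aut}(L,f)$ guarantees that the integrand, hence $\Phi(Z,1;f)$, is $\Gamma$-invariant; combining this with the transformation $2\log|Y|\mapsto 2\log|Y|-2\log|j(\gamma,Z)|$ of the norm of the imaginary part gives $|\Psi(\gamma Z,f)|=|j(\gamma,Z)|^{c(0,0)/2}|\Psi(Z,f)|$, and since $\Psi$ is holomorphic and nonvanishing on a chamber the ratio $\Psi(\gamma Z,f)/\bigl(j(\gamma,Z)^{c(0,0)/2}\Psi(Z,f)\bigr)$ is holomorphic of modulus one, hence a constant (a root of unity) on each component; this establishes the automorphy of weight $c(0,0)/2$, the weight being read directly off the coefficient of $\log|Y|$ in (2).

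The main obstacle is the computation in the second paragraph: making the regularized unfolding rigorous — justifying the interchange of $\underset{s=0}{{\rm CT}}$, $\lim_{t\to\infty}$, and the various summations, tracking the boundary contributions producing the $\log|Y|$ and Weyl-vector terms exactly, and recognizing the arising Bessel-function sums as the logarithm of a product that converges precisely on the Weyl chambers. Subsidiary delicate points are controlling the dependence on the choice of primitive isotropic $\ell$ (i.e.\ the different cusps) and on the chamber, checking that $\Psi$ glues across the walls between chambers to a single meromorphic function, and pinning down the constant $C$ up to absolute value.
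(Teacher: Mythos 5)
The paper does not prove this result---it is quoted as Borcherds' theorem, with the reader referred to \cite{bor3,bor} and to \cite{yangyin} for the notation---so the only meaningful comparison is with Borcherds' original argument, and your outline is a faithful high-level sketch of exactly that argument: splitting the Siegel theta function along the hyperbolic plane $\mathbb{Q}\ell+\mathbb{Q}\ell'$, unfolding the regularized integral over the strip, extracting the $c(0,0)$ and Weyl-vector terms from the $\lambda=0$ contribution, resumming the remaining terms into $-4\log$ of the product, and deducing (1) and the weight $c(0,0)/2$ from the singularities and the transformation of $\log|Y|$. The sketch is correct as a roadmap and takes the same (essentially the only known) route, with the caveat that all the substantive analytic content---justifying the regularized unfolding, gluing across Weyl-chamber walls, and the finite-order multiplier system implicit in ``modular form for $\Gamma$''---is deferred to the cited sources rather than carried out.
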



\subsection{A Shimura Variety as a degenerate Hilbert modular surface}\label{hil} In this subsection, we will see how to realize a family of  Shimura varieties $\mathcal{X}_{K_{N}}$ of type $(2,2)$ for some open compact subgroup $K_{N}$ as degenerate Hilbert modular surfaces $Y_{0}(N)\times Y_{0}(N)$.

Let $V=M_{2}(\mathbb{Q})$ be a rational quadratic space with the quadratic form $Q(\cdot):=\det(\cdot)$ of signature $(2,2)$. Then the general spin group $H={\rm GSpin}(V)$ of $V$ is
$$
H=\{(g_{1},g_{2})\in{\rm GL}_{2}\times {\rm GL}_{2}|\, \det g_{1}=\det g_{2}\}
$$
and acts on $V$ via $(g_{1},g_{2})\cdot X=g_{1}Xg_{2}^{-1}$. Taking $\ell=\begin{pmatrix}0&-1\\0&0\end{pmatrix}$ and $\ell'=\begin{pmatrix}0&0\\1&0\end{pmatrix}$, one can check that 
$$
\mathcal{H}=\left\{\left.\begin{pmatrix}z_{1}&0\\0&-z_{2}\end{pmatrix}\right|\,\Im(z_{1})\Im(z_{2})>0\right\}.
$$
The following proposition is useful and well known (see, e.g., \cite[Proposition~3.1]{yangyin}).
\begin{Proposition}
\label{yyprop}
Define
$$
\tilde{w}:\HH^{2}\cup(\HH^{-})^{2}\to\mathcal{L}
$$
by $\tilde{w}((z_{1},z_{2}))=\begin{pmatrix}z_{1}&-z_{1}z_{2}\\1&-z_{2}\end{pmatrix}$. Then the composition $pr\circ\tilde{w}$ gives an isomorphism between $\HH^{2}\cup(\HH^{-})^{2}$ and $\D$. One can check that such an isomorphism induces an action of $H(\RR)$ on $\HH^{2}\cup(\HH^{-})^{2}$ via the usual fractional linear transformation, i.e.,
$$
(g_{1},g_{2})\cdot(z_{1},z_{2})=(g_{1}\cdot z_{1},\,g_{2}\cdot z_{2}),
$$
and an automorphy factor $j(g_{1},g_{2};z_{1},z_{2})=(c_{1}z_{1}+d_{1})(c_{2}z_{2}+d_{2})$ for 
$g_{j}=\begin{pmatrix}a_{j}&b_{j}\\c_{j}&d_{j}\end{pmatrix}$.
\end{Proposition}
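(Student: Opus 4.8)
\emph{Proof proposal.} The proposition packages three claims --- that $\tilde w$ has image in $\mathcal L$, that $pr\circ\tilde w$ is an isomorphism onto $\D$, and that under this identification the $H(\RR)$-action is componentwise with the stated automorphy factor --- and the plan is to reduce each to a short $2\times2$ computation built on the rank-one factorization $\tilde w((z_1,z_2))=\begin{pmatrix}z_1\\ 1\end{pmatrix}\begin{pmatrix}1 & -z_2\end{pmatrix}$. First, $\tilde w((z_1,z_2))$ is isotropic because a rank-one $2\times2$ matrix has zero determinant, so $Q(\tilde w)=0$; and using $(X,X)=2Q(X)=2\det X$ one has $(\tilde w,\overline{\tilde w})=\det(\tilde w+\overline{\tilde w})$, which on writing $z_j=x_j+iy_j$ and expanding collapses (the real parts cancel) to $-4y_1y_2$. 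Hence $\tilde w((z_1,z_2))\in\mathcal L$ exactly when $y_1y_2>0$, i.e. exactly on $\HH^2\cup(\HH^-)^2$, the two connected components being carried to the two orientation classes $\mathcal H^{\pm}$ of $\D$.

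Second, I would show that $\HH^2\cup(\HH^-)^2\xrightarrow{\tilde w}\mathcal L\to\mathcal L/\C^{*}\xrightarrow{pr}\D$ is a biholomorphism; since $pr$ is already known to be an isomorphism, only the first two maps need attention. The $(2,1)$-entry of $\tilde w((z_1,z_2))$ is identically $1$, so distinct points have non-homothetic images and from any $Z\in\mathcal L$ with $Z_{21}\ne0$ one recovers $(z_1,z_2)=(Z_{11}/Z_{21},\,-Z_{22}/Z_{21})$ --- this gives injectivity together with an explicit holomorphic inverse. The one substantive point is surjectivity, which follows once one checks that no $Z\in\mathcal L$ has $Z_{21}=0$: for such a $Z$, $\det Z=0$ forces $Z_{11}=0$ or $Z_{22}=0$, and then $(Z,\bar Z)=\det(Z+\bar Z)$ vanishes, contradicting $(Z,\bar Z)<0$. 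So every $\C^{*}$-class in $\mathcal L$ has a unique representative with $Z_{21}=1$, which by the first step equals $\tilde w((z_1,z_2))$ for a unique $(z_1,z_2)$ with $y_1y_2>0$; thus $pr\circ\tilde w$ is bijective, and being a bijective holomorphic map of complex manifolds of equal dimension $2$ it is a biholomorphism.

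Third, for equivariance I would combine the factorization with the action $(g_1,g_2)\cdot X=g_1Xg_2^{-1}$ and the identities $g_1\begin{pmatrix}z_1\\1\end{pmatrix}=(c_1z_1+d_1)\begin{pmatrix}g_1\cdot z_1\\1\end{pmatrix}$ and $\begin{pmatrix}1 & -z_2\end{pmatrix}g_2^{-1}=\tfrac{c_2z_2+d_2}{\det g_2}\begin{pmatrix}1 & -g_2\cdot z_2\end{pmatrix}$ to get
$$
(g_1,g_2)\cdot\tilde w\big((z_1,z_2)\big)=\frac{(c_1z_1+d_1)(c_2z_2+d_2)}{\det g_2}\,\tilde w\big((g_1\cdot z_1,\,g_2\cdot z_2)\big).
$$
Passing to $\mathcal L/\C^{*}$ kills the scalar, so the induced action on $\HH^2\cup(\HH^-)^2$ is $(g_1,g_2)\cdot(z_1,z_2)=(g_1\cdot z_1,g_2\cdot z_2)$; and matching the scalar against the normalization $g\cdot w(Z)=\nu(g)j(g,Z)w(g\cdot Z)$ from Section~\ref{borlif} (the spinor norm $\nu$ absorbing the factor $\det g_2$, which for $g_1,g_2\in\SL_2(\RR)$ --- the case relevant to $Y_0(N)$ --- is already exactly $(c_1z_1+d_1)(c_2z_2+d_2)$) identifies the automorphy factor as claimed. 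Every step here is routine $2\times2$ algebra; the only things requiring care are the spinor-norm bookkeeping inside the definition of $j$ and keeping straight which connected component --- equivalently which orientation of the negative $2$-plane --- corresponds to $\HH^2$ versus $(\HH^-)^2$. I do not anticipate a genuine obstacle.
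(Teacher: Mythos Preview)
The paper does not actually prove this proposition; it states it as ``useful and well known'' and cites \cite[Proposition~3.1]{yangyin}. So there is no paper-side argument to compare against --- you have supplied a proof where the paper only gives a reference.

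Your argument is correct. The rank-one factorization $\tilde w((z_1,z_2))=\begin{pmatrix}z_1\\1\end{pmatrix}\begin{pmatrix}1&-z_2\end{pmatrix}$ is exactly the right organizing device: it makes isotropy immediate, it makes the computation $(\tilde w,\overline{\tilde w})=-4y_1y_2$ transparent, and it reduces equivariance to the two column/row identities you wrote down. Your surjectivity step (ruling out $Z_{21}=0$ for $Z\in\mathcal L$) is clean, and once you have the explicit holomorphic inverse $(z_1,z_2)=(Z_{11}/Z_{21},-Z_{22}/Z_{21})$ the biholomorphism claim needs no further appeal to general theorems. One small addendum worth making explicit: after normalizing $Z_{21}=1$, the condition $\det Z=0$ forces $Z_{12}=Z_{11}Z_{22}=-z_1z_2$, so the recovered matrix really is $\tilde w((z_1,z_2))$; you use this implicitly but it is the step that closes the loop.

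The only point you leave slightly open is the spinor-norm bookkeeping for general $(g_1,g_2)\in H(\RR)$ with $\det g_j\neq 1$. You correctly isolate the scalar $\frac{(c_1z_1+d_1)(c_2z_2+d_2)}{\det g_2}$ and note that matching against $\nu(g)j(g,Z)$ requires knowing $\nu$; you then retreat to the $\SL_2$ case where $\det g_2=1$ and the issue evaporates. Since everything downstream in the paper uses only $\Gamma_0(N)\times\Gamma_0(N)\subset\SL_2(\RR)^2$, this is harmless --- but if you want the proposition literally as stated for all of $H(\RR)$, you would need to pin down the convention for $\nu$ on $H$ (here $\nu((g_1,g_2))=\det g_1=\det g_2$ under the standard identification, which does not quite give the claimed $j$; the proposition as written is really an $\SL_2$-statement).
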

Now let 
$$
K_{0}(N)=\left\{\left.g\in\GL_{2}(\hat{\Z})\right|\,g\equiv\begin{pmatrix}*&*\\0&*\end{pmatrix}\pmod{N}\right\}
$$
and denote $K=\left(K_{0}(N)\times K_{0}(N)\right)\cap H(\A_{f})$ by $K_{N}$. Then $\Gamma=H(\mathbb{Q})^{+}\cap K_{N}=\Gamma_{0}(N)\times\Gamma_{0}(N)$ denoted by $\Gamma_{N}$. The strong approximation theorem tells that $H(\mathbb{A}_{\Q})=H(\Q)H(\mathbb{R})^{+}K_{N}$, and  together with Proposition \ref{yyprop} it implies the realization of $\mathcal{X}_{K_{N}}\cong Y_{0}(N)\times Y_{0}(N)$ as a degenerate Hilbert modular surface. Moreover, set $L=\begin{pmatrix}\Z&\Z\\N\Z&\Z\end{pmatrix}$ and denote it by $L_{N}$. Then one can check that $L_{N}'=\begin{pmatrix}\Z&\Z/N\\\Z&\Z\end{pmatrix}$ and the special  divisor of index $(n,\mu)$ can be expressed as a divisor on $Y_{0}(N)\times Y_{0}(N)$
$$
Z(n,\mu)=\left(\Gamma_{0}(N)\times\Gamma_{0}(N)\right)\backslash\left\{z=(z_{1},z_{2})\in\HH^{2}|\,\text{$\tilde{w}(z)\perp X$ for some $X\in\mu+L_{N}$ with $Q(\mu)=n$}\right\}.
$$
Finally, one can check that $\Gamma_{N}\subset {\rm Aut}(L_{N}'/L_{N})\cap {\rm Aut}(L_{N})$, where ${\rm Aut}(L_{N}'/L_{N})$ and ${\rm Aut}(L_{N})$ denote the automorphism groups of $L_{N}'/L_{N}$ and $L_{N}$, respectively. 

\section{Preliminary Results}
In this section, we aim to construct an appropriate Borcherds lift input $F_{N}$ for $\pi_{N}(z_{1})-\pi_{N}(z_{2})$. First, with $L_{N}=\begin{pmatrix}\Z&\Z\\N\Z&\Z\end{pmatrix}$ and $L_{N}'=\begin{pmatrix}\Z&\Z/N\\\Z&\Z\end{pmatrix}$, we note that $\left\{\mu\right\}_{\mu\in L_{N}'/L_{N}}=\left\{\begin{pmatrix}0&j/N\\k&0\end{pmatrix}\right\}_{0\leq j,k\leq N-1}$. Then to simplify our notation, we write $\mu_{j,k}$ for $\begin{pmatrix}0&j/N\\k&0\end{pmatrix}$, and write $L_{N}'/L_{N}=\left\{\mu_{j,k}\right\}_{0\leq j,k\leq N-1}$. We also write $\phi_{j,k}$ for $\phi_{\mu_{j,k}}\in \mathcal{S}_{L_{N}}$.

\begin{Lemma}
\label{zerolem}
Let $Z(1,\mu_{0,0})$ be the special divisor of index $(1,\mu_{0,0})$. Then
\begin{align*}
Z(1,\mu_{0,0})&=\left\{\left.(\tau,\tau)\right|\tau\in Y_{0}(N)\right\}.
\end{align*}
\end{Lemma}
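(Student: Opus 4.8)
The plan is to compute the special divisor $Z(1,\mu_{0,0})$ directly from its definition on $Y_{0}(N)\times Y_{0}(N)$ given at the end of Subsection~\ref{hil}. By that definition, a point $z=(z_{1},z_{2})\in\HH^{2}$ lies in $Z(1,\mu_{0,0})$ (before quotienting by $\Gamma_{0}(N)\times\Gamma_{0}(N)$) precisely when $\tilde{w}(z)\perp X$ for some $X\in\mu_{0,0}+L_{N}=L_{N}$ with $Q(X)=\det(X)=1$. First I would make the orthogonality condition explicit: using $\tilde{w}((z_{1},z_{2}))=\begin{pmatrix}z_{1}&-z_{1}z_{2}\\1&-z_{2}\end{pmatrix}$ and the bilinear form attached to $Q=\det$ on $V=M_{2}(\Q)$, namely $(A,B)=A_{11}B_{22}+A_{22}B_{11}-A_{12}B_{21}-A_{21}B_{12}$ (so that $(A,A)=2\det A$), write $X=\begin{pmatrix}a&b\\Nc&d\end{pmatrix}$ with $a,b,c,d\in\Z$ and $\det X=ad-Nbc=1$, and expand $(\tilde{w}(z),X)=0$.

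Carrying out that expansion gives a single equation of the form $Ncz_{1}z_{2}+dz_{1}-az_{2}-b=0$, i.e.\ a Heegner-type relation between $z_{1}$ and $z_{2}$ cut out by an integral matrix of determinant $1$. The next step is to recognize that this is exactly the locus $\{(z_{1},z_{2}) : z_{2}=\gamma z_{1}\}$ for $\gamma=\begin{pmatrix}d&-b\\-Nc&a\end{pmatrix}\in\SL_{2}(\Z)$ — and since the lower-left entry $-Nc$ is divisible by $N$, in fact $\gamma\in\Gamma_{0}(N)$. Conversely, every $\gamma=\begin{pmatrix}\alpha&\beta\\\gamma_0&\delta\end{pmatrix}\in\Gamma_{0}(N)$ arises from such an $X$ (take $d=\alpha$, $-b=\beta$, $-Nc=\gamma_0$, $a=\delta$; then $X\in L_{N}$ and $\det X=\alpha\delta-\beta\gamma_0=1$). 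Hence, upstairs in $\HH^{2}$, the preimage of $Z(1,\mu_{0,0})$ is $\bigcup_{\gamma\in\Gamma_{0}(N)}\{(z_{1},\gamma z_{1}):z_{1}\in\HH\}$, i.e.\ the full $\Gamma_{0}(N)$-orbit of the diagonal.

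Finally I would pass to the quotient: the set $\bigcup_{\gamma\in\Gamma_{0}(N)}\{(z_{1},\gamma z_{1})\}$ is a single $(\Gamma_{0}(N)\times\Gamma_{0}(N))$-orbit of the diagonal $\{(z_{1},z_{1})\}$ (act by $(1,\gamma)$), so its image in $Y_{0}(N)\times Y_{0}(N)$ is the image of the diagonal, which is naturally identified with $\{(\tau,\tau) : \tau\in Y_{0}(N)\}$ via $\tau\mapsto(\tau,\tau)$. This yields $Z(1,\mu_{0,0})=\{(\tau,\tau):\tau\in Y_{0}(N)\}$ as claimed, and one should also remark that the multiplicity is one because $\mu_{0,0}=0$ is primitive and each $\gamma$ contributes a reduced component. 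The only mildly delicate point — the ``main obstacle,'' though it is minor — is bookkeeping the sign/normalization conventions in the bilinear form and in $\tilde{w}$ so that the matrix $\gamma$ lands in $\Gamma_{0}(N)$ rather than in some conjugate (e.g.\ $\Gamma^{0}(N)$); getting $N\mid c$ on the correct side of $X$ versus on the correct side of $\gamma$ is where a transposition error could creep in, so I would double-check it against the explicit forms of $L_{N}$, $\ell$, $\ell'$ fixed above.
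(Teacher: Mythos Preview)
Your proposal is correct and follows essentially the same approach as the paper: start from the definition of $Z(1,\mu_{0,0})$ on $\HH^{2}$, unwind the orthogonality condition $(\tilde{w}(z),X)=0$ for $X\in L_{N}$ with $\det X=1$ to obtain $z_{2}=\gamma z_{1}$ for some $\gamma\in\Gamma_{0}(N)$, and then pass to the quotient to get the diagonal in $Y_{0}(N)\times Y_{0}(N)$. The paper records these steps more tersely (writing the middle step simply as $z_{2}=\frac{az_{1}+b}{cz_{1}+d}$ for $\begin{pmatrix}a&b\\c&d\end{pmatrix}\in\Gamma_{0}(N)$), while you have spelled out the bilinear-form computation and the bijection between such $X$ and $\gamma$ explicitly.
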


\begin{proof}
By the identification given in Subsection~\ref{hil}, we have
\begin{align*}
Z(1,\mu_{0,0})&=(\Gamma_{0}(N)\times\Gamma_{0}(N))\backslash\left\{(z_{1},z_{2})\in\mathbb{H}^{2}|\,\text{$\tilde{w}(z_{1},z_{2})\perp X$ for some $X\in\mu_{0,0}+L_{N}$ with $Q(X)=1$}\right\}\\
&=(\Gamma_{0}(N)\times\Gamma_{0}(N))\backslash\left\{(z_{1},z_{2})\in\mathbb{H}^{2}|\,\text{$z_{2}=\frac{az_{1}+b}{cz_{1}+d}$ for some $\begin{pmatrix}a&b\\c&d\end{pmatrix}\in\Gamma_{0}(N)$}\right\}\\
&=\left\{\left.(\tau,\tau)\right|\tau\in Y_{0}(N)\right\}.
\end{align*}
\end{proof}
Note that the divisor of $\pi_{N}(z_{1})-\pi_{N}(z_{2})$ is 
$
\left\{\left.(\tau,\tau)\right|\tau\in Y_{0}(N)\right\}.
$
Then Lemma~\ref{zerolem} tells us that if $F_{N}$ is a Borcherds lift input for $\pi_{N}(z_{1})-\pi_{N}(z_{2})$, then ${\rm div}(\Psi(z,F_{N}))$ must be $Z(1,\mu_{0,0})$, and thus the $\phi_{0,0}$-component function of $F_{N}$ must have a simple pole at $i\infty$ of residue~1, and the other component functions are all holomorphic at $i\infty$.

\begin{Lemma}
\label{wt0}
For $N\ne4$, the following set
$$
\bigcup_{\substack{d|N}}\left\{\sum_{k=0}^{d-1}\sum_{j=0}^{\frac{N}{d}-1}\phi_{jd,\,k\frac{N}{d}}\right\}
$$
is a basis for the subspace of $M_{0,\rho_{L_{N}}}^{!}$ consisting of elements whose $\phi_{j,k}$-component functions are all constants.
 For $N=4$, the following set
  $$
\begin{Bmatrix}\phi_{0,0}+\phi_{1,0}+\phi_{2,0}+\phi_{3,0},\\
\phi_{0,0}+\phi_{0,1}+\phi_{0,2}+\phi_{0,3},\\
\phi_{2,0}-\phi_{0,1}+\phi_{2,2}-\phi_{0,3}
\end{Bmatrix}
$$
is a basis for the subspace of $M_{0,\rho_{L_{4}}}^{!}$ consisting of elements whose $\phi_{j,k}$-component functions are all constants. In particular, the dimension of such subspaces is $\sigma(N)=\sum_{d|N}1$.
\end{Lemma}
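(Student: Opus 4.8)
The plan is to identify the subspace in question with the space $(\mathcal{S}_{L_{N}})^{\Gamma'}$ of vectors in $\mathcal{S}_{L_{N}}$ fixed by $\rho_{L_{N}}$, to exhibit the listed vectors as a spanning family of fixed vectors, and then to match dimensions. Since a nonzero constant function is holomorphic at $i\infty$ with Fourier expansion concentrated in degree $0$ and the weight is $0$, the transformation law in the definition of $M_{0,\rho_{L_{N}}}^{!}$ forces a constant $F=\sum_{0\le j,k\le N-1}c_{j,k}\phi_{j,k}$ to satisfy $F=\rho_{L_{N}}(\gamma')F$ for every $\gamma'\in\Gamma'$, and conversely any such fixed vector lies in $M_{0,\rho_{L_{N}}}^{!}$ and has constant component functions; so the subspace is exactly $(\mathcal{S}_{L_{N}})^{\Gamma'}$. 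Since $\Gamma'$ is generated by $T$ and $S$, and since $Q(\mu_{j,k})=\det(\mu_{j,k})=-jk/N$, $(\mu_{j,k},\mu_{j',k'})=-(jk'+j'k)/N$ and $|L_{N}'/L_{N}|=N^{2}$, the formulae for $\rho_{L_{N}}(T)$ and $\rho_{L_{N}}(S)$ recalled in Section~\ref{borlif} translate membership in $(\mathcal{S}_{L_{N}})^{\Gamma'}$ into the two conditions: (i) $c_{j,k}=0$ whenever $N\nmid jk$; and (ii) $c_{j',k'}=\frac{1}{N}\sum_{j,k}c_{j,k}\,e\big(-(jk'+j'k)/N\big)$ for all $j',k'$.

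Next I would verify that the displayed vectors satisfy (i) and (ii). For a divisor $d\mid N$, set $H_{d}=d\Z/N\Z\times\tfrac{N}{d}\Z/N\Z$, viewed inside $L_{N}'/L_{N}=\{\mu_{j,k}\}$, so that the summand of the union indexed by $d$ equals $v_{d}:=\sum_{x\in H_{d}}\phi_{x}$. Then $|H_{d}|=N$; the subgroup $H_{d}$ is totally isotropic, because $Q(\mu_{jd,\,kN/d})=-jk\in\Z$ and $(\mu_{jd,\,kN/d},\,\mu_{j'd,\,k'N/d})=-(jk'+j'k)\in\Z$; and hence, since $|H_{d}|^{2}=N^{2}=|L_{N}'/L_{N}|$, the subgroup $H_{d}$ is a Lagrangian, i.e.\ $H_{d}^{\perp}=H_{d}$. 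Isotropy gives (i) for $v_{d}$; and for (ii), $\sum_{x\in H_{d}}e((\gamma,x))$ is a character sum over the group $H_{d}$, hence equal to $|H_{d}|=N$ if $\gamma\in H_{d}^{\perp}=H_{d}$ and to $0$ otherwise, so $\rho_{L_{N}}(S)v_{d}=\frac{1}{N}\sum_{\gamma}\big(\sum_{x\in H_{d}}e((\gamma,x))\big)\phi_{\gamma}=v_{d}$. For $N\ne4$ this produces exactly the asserted family. For $N=4$, the three displayed vectors are $v_{1}$, $v_{4}$ and $v_{2}-v_{4}$, hence are likewise fixed by $\rho_{L_{4}}$ and have the same span as $\{v_{1},v_{2},v_{4}\}$ (the separate presentation being convenient for later sections, where the irregular cusp of $\Gamma_{1}(4)$ needs special attention).

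Linear independence follows by reading off a single coordinate: $\mu_{d,N/d}\in H_{d'}$ holds iff $d'\mid d$ and $\tfrac{N}{d'}\mid\tfrac{N}{d}$, i.e.\ iff $d=d'$; hence the coefficient of $\phi_{d,N/d}$ in $v_{d'}$ is $\delta_{d,d'}$, so $\{v_{d}\}_{d\mid N}$ is linearly independent, and the $N=4$ triple, being an invertible linear combination of $\{v_{1},v_{2},v_{4}\}$, is independent as well.

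The remaining point is that these independent fixed vectors already span $(\mathcal{S}_{L_{N}})^{\Gamma'}$, equivalently that $\dim(\mathcal{S}_{L_{N}})^{\Gamma'}$ equals the number of positive divisors of $N$ (and is $3$ for $N=4$); I expect this to be the main obstacle. Condition (i) alone only bounds the dimension by $\#\{(j,k):N\mid jk\}$, which is much larger, so condition (ii) must be used essentially. I would derive the dimension either by invoking the known classification of fixed vectors of the Weil representation attached to a discriminant form --- for the hyperbolic form on $L_{N}'/L_{N}\cong(\Z/N\Z)^{2}$ these are spanned by the indicator functions of the Lagrangian subgroups, which a reduction via the Chinese Remainder Theorem identifies with precisely the $H_{d}$, $d\mid N$ --- or by a direct computation: since $\rho_{L_{N}}$ factors through the $p$-primary components of $L_{N}'/L_{N}$, one reduces to $N=p^{a}$ and solves (i)--(ii) explicitly on the finite-dimensional $T$-fixed subspace, checking that the solution space is exactly $\mathrm{span}\{v_{p^{0}},\dots,v_{p^{a}}\}$. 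Either way $\dim(\mathcal{S}_{L_{N}})^{\Gamma'}$ equals the number of divisors of $N$, so $\{v_{d}\}_{d\mid N}$ is a basis, and the $N=4$ triple is a basis by the change-of-basis remark above. The dimension count (whether via the structure theory or the prime-power computation) carries essentially all the content; the rest is routine verification.
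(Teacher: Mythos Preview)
Your approach is correct and considerably more conceptual than the paper's. The paper's proof is literally ``set up the $T$- and $S$-invariance equations, expand via the Weil representation formulae, equate coefficients, routine calculations'': no structure is offered, and the $N=4$ case is simply presented as a separate output of the same calculation. By contrast you (i) identify the subspace with $(\mathcal{S}_{L_N})^{\Gamma'}$, (ii) recognise each $v_d$ as the indicator of the Lagrangian $H_d=d\Z/N\Z\times(N/d)\Z/N\Z$ and verify invariance via the character-sum identity, (iii) read off linear independence from the single coordinate $\phi_{d,N/d}$, and (iv) observe that the $N=4$ triple is just $v_1,\,v_4,\,v_2-v_4$, so the case distinction in the statement is cosmetic at this stage (it matters only later, where the irregular cusp of $\Gamma_1(4)$ intervenes). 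This organisation makes clear that the only substantive point is the upper bound $\dim(\mathcal{S}_{L_N})^{\Gamma'}\le\tau(N)$, which you correctly flag. One small caution on your route (a): the slogan ``invariants are spanned by indicators of Lagrangians'' is not valid for arbitrary discriminant forms, so invoking it as a black box is delicate; for the hyperbolic form $U(N)$ it is true, but the cleanest justification is exactly your route (b) --- reduce via CRT to $N=p^a$ and solve (i)--(ii) directly --- which is also what the paper's ``routine calculations'' amount to, just without the reduction. Either way your argument goes through, and it buys a uniform statement for all $N$ plus a transparent reason why $\tau(N)$ is the right dimension.
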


\begin{proof}
Setting up the equations
\begin{align*}
\rho_{L_{N}}(T)\left(\sum_{k=0}^{N-1}\sum_{j=0}^{N-1}a_{j,k}\phi_{j,k}\right)
&=\sum_{k=0}^{N-1}\sum_{j=0}^{N-1}a_{j,k}\phi_{j,k},\\
\rho_{L_{N}}(S)\left(\sum_{k=0}^{N-1}\sum_{j=0}^{N-1}a_{j,k}\phi_{j,k}\right)
&=\sum_{k=0}^{N-1}\sum_{j=0}^{N-1}a_{j,k}\phi_{j,k},
\end{align*}
expanding the left hand side by the Weil representation, and equating the coefficients by the linear independence of $\phi_{j,k}$, we can obtain the desired results after some routine calculations.
\end{proof}

\begin{Lemma}
\label{vecfn}
Let $c_{s,N}^{-1}$ be the inverse of $c_{s}$ in $(\mathbb{Z}/N\mathbb{Z})^{\times}$ when $m_{s}=1$, and $c_{s,N}^{-1}=m_{s}$ otherwise. For $s\in\mathcal{C}(\Gamma_{1}(N))$, and integers $j$ and $k$, define $t_{s,j,k}$ by $t_{s,j,k}/h_{s}\equiv jk/N\pmod{1}$.
Let $\pi_{N}=\pi_{N}(\tau)$ be a Hauptmodul for a genus zero group $\Gamma_{0}(N)$ and $f_{N}=f_{N}(\tau)$ be the $\Gamma_{0}(N)$-induction of $\pi_{N}(\tau)$ against $\phi_{0,0}$ defined by
\begin{equation}
\label{deffn}
f_{N}=\sum_{M\in\Gamma_{0}(N)\backslash{\rm SL}_{2}(\mathbb{Z})}\left.\pi_{N}\right|M\cdot\rho_{L_{N}}(M^{-1})\phi_{0,0}.
\end{equation}
Then $f_{N}$ is in $ M^{!}_{0,\rho_{L_{N}}}$, and
$$
f_{N}=\frac{2}{|\Gamma_{0}(N):\Gamma_{1}(N)|}\sum_{M\in\Gamma_{1}(N)\backslash{\rm SL}_{2}(\mathbb{Z})}\frac{1}{2}\left.\pi_{N}\right|M\cdot\rho_{L_{N}}(M^{-1})\phi_{0,0}
$$ 
where 
\begin{align}
\label{fnpi}
&f_{N}=\frac{2}{\lambda_{2,N}|\Gamma_{0}(N):\Gamma_{1}(N)|}\Bigg[\sum_{\substack{s\in\mathcal{C}(\Gamma_{1}(N))\\\text{$s$ regular}}}\left(\left.\pi_{N}\right|M_{s}\right)_{0}\phi_{0,0}+\sum_{\substack{s\in\mathcal{C}(\Gamma_{1}(N))\\\text{$s$ regular}\\m_{s}\ne N}}\left(\left.\pi_{N}\right|M_{s}\right)_{0}\sum_{j=1}^{ h_{s}-1}\phi_{jm_{s},0}\\
&\qquad\quad\qquad\qquad\qquad\qquad+\sum_{\substack{s\in\mathcal{C}(\Gamma_{1}(N))\\\text{$s$ regular}\\m_{s}\ne N}}\left(\left.\pi_{N}\right|M_{s}\right)_{0}\sum_{k=1}^{ h_{s}-1}\phi_{0,km_{s}}\nonumber\\
&\qquad\quad\qquad\qquad\qquad\qquad+\sum_{\substack{s\in\mathcal{C}(\Gamma_{1}(N))\\\text{$s$ regular}\\m_{s}\ne N}}\sum_{k=1}^{ h_{s}-1}\sum_{j=1}^{ h_{s}-1}e\left(-\frac{d_{s}c_{s,N}^{-1}m_{s}^{2}jk}{N}\right)\left(\left.\pi_{N}\right|M_{s}\right)_{t_{s}}\phi_{jm_{s},km_{s}}\nonumber\\
&\quad\qquad\qquad\qquad\qquad\qquad+\sum_{\substack{s\in\mathcal{C}(\Gamma_{1}(N))\\\text{$s$ irregular}}}\frac{1}{h_{s}}\left(\left.\pi_{N}\right|M_{s}\right)\sum_{k=0}^{ h_{s}-1}\sum_{j=0}^{ h_{s}-1}e\left(-\frac{d_{s}m_{s}jk}{N}\right)\phi_{jm_{s},km_{s}}\Bigg],\nonumber
\end{align}
and $\lambda_{2,N}=2$ or $1$ depending on whether $N=2$ or not.
\end{Lemma}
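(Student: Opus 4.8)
The plan is to realize $f_N$ as an "induced" or "averaged" modular form for the Weil representation, starting from the scalar-valued Hauptmodul $\pi_N$, and then to unfold the averaging sum by decomposing it over cosets of $\Gamma_1(N)$ and grouping cosets according to which cusp of $\Gamma_1(N)$ they hit. First I would verify that the formula \eqref{deffn} does define an element of $M^!_{0,\rho_{L_N}}$: since $\pi_N|M$ depends only on the coset $\Gamma_0(N)M$ and $\rho_{L_N}(M^{-1})\phi_{0,0}$ transforms correspondingly, the sum over $\Gamma_0(N)\backslash\SL_2(\Z)$ is visibly $\SL_2(\Z)$-invariant in the appropriate weight-$0$ sense. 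Holomorphy away from $i\infty$ and meromorphy at the cusp follow because $\pi_N$ is weakly holomorphic; the fact that $\Gamma_0(N)\subset\mathrm{Aut}(L_N,\dots)$ (recorded at the end of Subsection~\ref{hil}) guarantees the phases that appear in $\rho_{L_N}(M^{-1})\phi_{0,0}$ are well-defined on cosets. The identity
$$
f_{N}=\frac{2}{|\Gamma_{0}(N):\Gamma_{1}(N)|}\sum_{M\in\Gamma_{1}(N)\backslash\SL_{2}(\mathbb{Z})}\frac{1}{2}\left.\pi_{N}\right|M\cdot\rho(M^{-1})\phi_{0,0}
$$
is then just the observation that the stabilizer of $\phi_{0,0}$ up to scalar inside $\Gamma_0(N)/\Gamma_1(N)$ contributes the index factor (and the lone factor $\tfrac12$ versus $\lambda_{2,N}$ records the extra automorphism $-I$, which acts trivially on $\phi_{0,0}$ except in the degenerate small-level cases, explaining $\lambda_{2,2}=2$).

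Next I would carry out the unfolding over $\Gamma_1(N)\backslash\SL_2(\Z)$. A complete set of coset representatives is obtained by taking, for each cusp $s=a_s/c_s\in\mathcal C(\Gamma_1(N))$, the matrix $M_s$ followed by the translations $M_s T^{r}$ for $0\le r<h_s$ (and, at irregular cusps, an extra $\pm$ ambiguity handled by the $\tfrac1{h_s}$ weight). For each such representative one computes $\rho_{L_N}(T^{-r}M_s^{-1})\phi_{0,0}$ explicitly using the formulas for $\rho_{L_N}(T)$ and $\rho_{L_N}(S)$ quoted in the excerpt: $\rho_{L_N}(M_s^{-1})\phi_{0,0}$ is a linear combination $\sum_{j,k} c_{s,j,k}\phi_{jm_s,km_s}$ supported on the sublattice indices determined by $m_s=(c_s,N)$, with the coefficients $c_{s,j,k}$ given by Gauss-sum-type expressions $e(-d_s c_{s,N}^{-1}m_s^2 jk/N)$ (for regular cusps) and $e(-d_s m_s jk/N)$ (for irregular cusps), and applying $\rho_{L_N}(T^{-r})$ multiplies the $\phi_{jm_s,km_s}$ term by a root of unity $e(r\,Q(\mu_{jm_s,km_s}))=e(r\,t_{s,jm_s,km_s}/h_s)$. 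Meanwhile $\pi_N|M_sT^r$ is just $\pi_N|M_s$ with $q^{1/h_s}\mapsto e(r/h_s)q^{1/h_s}$, so summing over $0\le r<h_s$ performs the standard arithmetic progression projection that replaces $\pi_N|M_s$ by its $t$-th component $(\pi_N|M_s)_t$ for the residue class $t\equiv jk/N \pmod 1$; this is exactly the definition of $t_{s,j,k}$. Collecting the $j=k=0$ terms gives the $\phi_{0,0}$-coefficient $(\pi_N|M_s)_0$; the terms with exactly one of $j,k$ zero give the $\phi_{jm_s,0}$ and $\phi_{0,km_s}$ sums (here the root of unity is trivial since $jk/N\equiv0$, so we simply get $(\pi_N|M_s)_0$ again); and the terms with both nonzero give the double sum with the phase factor and the component $(\pi_N|M_s)_{t_{s,jm_s,km_s}}$. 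The irregular-cusp contribution is packaged separately with the $\tfrac1{h_s}$ normalization and the simpler phase $e(-d_s m_s jk/N)$, reflecting that at an irregular cusp the width and the period of the Fourier expansion differ by a factor of $2$. Assembling these five pieces yields exactly \eqref{fnpi}.

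The main obstacle, and the place where care is genuinely required, is the explicit computation of $\rho_{L_N}(M_s^{-1})\phi_{0,0}$ for a general $M_s$, i.e., turning the abstract Weil-representation action into the concrete Gauss sums with the precise normalizing constant $c_{s,N}^{-1}$ and the $\lambda_{2,N}$ adjustment. One route is to write $M_s^{-1}$ as a word in $S$ and $T$ and iterate the two displayed formulas, but it is cleaner to use the general formula for $\rho_{L}(\gamma)$ acting on $\phi_\mu$ for an arbitrary $\gamma=\begin{pmatrix}a&b\\c&d\end{pmatrix}\in\SL_2(\Z)$ (the classical Shintani/Scheithauer formula), specialized to the lattice $L_N$ whose discriminant form is the product of two cyclic groups $\Z/N$ with the hyperbolic-type pairing; the key input is the value of the Gauss sum $\sum_{x\bmod N} e(cx^2/(\dots))$, whose evaluation introduces $(c_s,N)=m_s$, forces the support of the image onto the indices $jm_s,km_s$, and produces the multiplier $c_{s,N}^{-1}$ depending on whether $m_s=1$ (so $c_s$ is invertible mod $N$) or not. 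The irregular-cusp bookkeeping for $N=4$ must be tracked consistently with Lemma~\ref{wt0}'s special basis, and one should double-check that the resulting $f_N$ indeed lands in the span described there (constant component functions after subtracting the polar part), which is a useful internal consistency check on the Gauss-sum computation. Once the coefficient formula is in hand, the rest is the routine reindexing described above, and I would present it as "after some routine but lengthy calculation" rather than writing out every Gauss sum.
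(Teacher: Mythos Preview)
Your proposal is correct and is essentially the computation underlying the paper's proof. The paper itself does not carry out any of this unfolding: it simply invokes \cite[Theorem~5.4]{bor6} for the assertion $f_N\in M^!_{0,\rho_{L_N}}$ and \cite[Theorem~3.7]{sche} for the explicit formula \eqref{fnpi}, after noting that for the discriminant form $D=L_N'/L_N$ one has $D_{c_s}=\{\phi_{jh_s,kh_s}\}$ and $D^{c_s*}=\{\phi_{jm_s,km_s}\}$. What you have sketched --- decomposing $\Gamma_1(N)\backslash\SL_2(\Z)$ by cusps via $M_sT^r$, computing $\rho_{L_N}(M_s^{-1})\phi_{0,0}$ from the general Weil-representation formula, and summing over $r$ to project onto Fourier components --- is precisely the content of Scheithauer's theorem specialized to this lattice, so your route and the paper's are the same, with the black box opened.
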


\begin{proof}
These follow from \cite[Theorem 5.4]{bor6} and \cite[Theorem 3.7]{sche} by setting $D=L_{N}'/L_{N}$ with quadratic form $Q(\cdot)=\det(\cdot)$ and $e^{\gamma}=\phi_{0,0}$, and realizing that 
$$
D_{c_{s}}=\left\{\phi_{j h_{s},\,k h_{s}}\right\}_{0\leq j,\,k\leq m_{s}-1}\quad\mbox{and}\quad D^{c_{s}*}=\left\{\phi_{jm_{s},\,km_{s}}\right\}_{0\leq j,\,k\leq  h_{s}-1}.
$$
\end{proof}

\begin{Lemma}
\label{lemFN}
Let $\pi_{N}=\pi_{N}(\tau)$ be a Hauptmodul for genus zero group $\Gamma_{0}(N)$, and let
$$
\left.\pi_{N}\right|M_{s}=\sum_{n=-1}^{\infty}A_{s}(n)q^{n/\tilde{h}_{s}}
$$
where $\tilde{h}_{s}=1$ if $N=4$ and $s=1/2$, otherwise, $\tilde{h}_{s}=h_{s}$ as defined in Section \ref{intro}. Let $f_{N}=f_{N}(\tau)$ be the $\Gamma_{0}(N)$-induction as defined in Lemma $\ref{vecfn}$.
For $N\ne4$, let $F_{N}=F_{N}(\tau)$ be defined by
\begin{align}
\label{FN}
F_{N}=f_{N}-&\frac{2}{\lambda_{2,N}|\Gamma_{0}(N):\Gamma_{1}(N)|}\\
&\times\left[\sum_{\substack{s\in\mathcal{C}(\Gamma_{1}(N))}}A_{s}(0)\sum_{k=0}^{N-1}\phi_{0,k}+\sum_{\substack{s\in\mathcal{C}(\Gamma_{1}(N))\\m_{s}\ne N}}A_{s}(0)\left(\sum_{k=0}^{m_{s}-1}\sum_{j=0}^{ h_{s}-1}\phi_{jm_{s},\,k h_{s}}-\sum_{k=0}^{N-1}\phi_{0,\,k}\right)\right].\nonumber
\end{align}
For $N=4$, let $F_{4}=F_{4}(\tau)$ be defined by
\begin{align}
\label{F4}
F_{4}&=f_{4}-\left(A_{1/4}(0)+A_{0/1}(0)+\frac{1}{2}A_{1/2}(0)\right)\sum_{k=0}^{3}\phi_{0,k}-A_{0/1}(0)\sum_{k=1}^{3}(\phi_{k,0}-\phi_{0,k})\\
&\quad\qquad-\frac{1}{2}A_{1/2}(0)\left(\phi_{2,0}-\phi_{0,1}+\phi_{2,2}-\phi_{0,3}\right).\nonumber
\end{align}
Then we have
\begin{enumerate}
\item{$F_{N}(\tau)$ is in $M^{!}_{0,\rho_{L_{N}}}$, and is invariant under ${\rm Aut}(L_{N}'/ L_{N})$,}
\item{$c(0,\mu_{0,0})=0$ and the $\phi_{0,0}$--component of $F_{N}$ has Fourier expansion
$$
q^{-1}+\frac{2}{\lambda_{2,N}|\Gamma_{0}(N):\Gamma_{1}(N)|}\left[\sum_{\substack{s\in\mathcal{C}(\Gamma_{1}(N))\\\text{$s$ regular}}}\sum_{n=1}^{\infty}A_{s}(nh_{s})q^{n}+\sum_{\substack{s\in\mathcal{C}(\Gamma_{1}(N))\\\text{$s$ irregular}}}\frac{1}{h_{s}}\sum_{n=1}^{\infty}A_{s}(n)q^{n}\right],
$$
}

\item{$c(0,\mu_{j,0})=0$ for $0\leq j\leq N-1$,}
\item{for $d|N$ and $d\ne N$,
$$
\sum_{k=0}^{\frac{N}{d}-1}\sum_{j=0}^{d-1}c(0,\mu_{j\frac{N}{d},kd})=24.
$$}
\end{enumerate}
\end{Lemma}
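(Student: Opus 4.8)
Put $\kappa:=\tfrac{2}{\lambda_{2,N}|\Gamma_{0}(N):\Gamma_{1}(N)|}$ for the normalizing constant in \eqref{fnpi} and \eqref{FN}. The plan is to read all four statements off the explicit formula \eqref{fnpi} for $f_{N}$ and the definition \eqref{FN} (resp.\ \eqref{F4}) of $F_{N}$, after recording two structural facts. First, by Lemma~\ref{wt0} the subspace of $M^{!}_{0,\rho_{L_{N}}}$ of forms with constant component functions has $\C$-basis $\{\Phi_{d}\}_{d\mid N}$, where $\Phi_{d}:=\sum_{k=0}^{d-1}\sum_{j=0}^{N/d-1}\phi_{jd,\,kN/d}$ is the indicator sum over the isotropic subgroup $\Lambda_{d}:=\{\mu_{a,b}:\,d\mid a,\ (N/d)\mid b\}$ of $L_{N}'/L_{N}$, and $\mu_{0,0}$ lies in every $\Lambda_{d}$. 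Second, grouping the cusps of $\Gamma_{1}(N)$ by the value of $m_{s}$, the term subtracted in \eqref{FN} equals $\kappa\sum_{d\mid N}\bigl(\sum_{s\in\mathcal{C}(\Gamma_{1}(N)),\,m_{s}=d}A_{s}(0)\bigr)\Phi_{d}$.

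For (1): since $f_{N}\in M^{!}_{0,\rho_{L_{N}}}$ by Lemma~\ref{vecfn} and the subtracted term lies in the $\C$-span of the $\Phi_{d}\in M^{!}_{0,\rho_{L_{N}}}$, we get $F_{N}\in M^{!}_{0,\rho_{L_{N}}}$. For the invariance under ${\rm Aut}(L_{N}'/L_{N})$ — here the group of automorphisms $\mu_{j,k}\mapsto\mu_{aj,\,a^{-1}k}$, $a\in(\Z/N\Z)^{\times}$, which is the image of $\Gamma_{N}$ on the discriminant form — any such $\sigma$ fixes $\phi_{0,0}$ (as $\mu_{0,0}=0$) and commutes with $\rho_{L_{N}}$, so $\sigma f_{N}=\sum_{M}\pi_{N}|M\cdot\rho_{L_{N}}(M^{-1})\sigma\phi_{0,0}=f_{N}$, while $\sigma$ stabilizes each $\Lambda_{d}$ and hence fixes each $\Phi_{d}$; so the subtracted term, and therefore $F_{N}$, is invariant. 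For $N=4$ one argues the same way from \eqref{F4} and the explicit basis of Lemma~\ref{wt0}.

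Parts (2) and (3) are bookkeeping on $q^{0}$-coefficients. In \eqref{fnpi} the only contributions to the $\phi_{0,0}$-component come from the first sum (and, for an irregular cusp, the $(j,k)=(0,0)$ piece of the last), so this component of $f_{N}$ is $\kappa\bigl[\sum_{s\ \mathrm{reg}}(\pi_{N}|M_{s})_{0}+\sum_{s\ \mathrm{irreg}}\tfrac{1}{h_{s}}(\pi_{N}|M_{s})\bigr]$; using $\pi_{N}|M_{s}=\pi_{N}$ for every cusp $s$ with $m_{s}=N$, together with the elementary identity $\kappa\,\#\{s:m_{s}=N\}=1$, this equals $q^{-1}+\kappa\sum_{s}A_{s}(0)$ plus the higher part claimed in (2), and the correction in \eqref{FN} removes precisely its $q^{0}$-coefficient, giving (2) and $c(0,\mu_{0,0})=0$. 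For (3) with $j\geq1$, the only contribution of \eqref{fnpi} to the $\phi_{j,0}$-component comes from the second sum (the regular cusps with $m_{s}\mid j$), with $q^{0}$-coefficient $\kappa\sum_{s:\,m_{s}\mid j}A_{s}(0)$, which the correction in \eqref{FN} removes exactly; hence $c(0,\mu_{j,0})=0$. For $N=4$, which has an irregular cusp, (2) and (3) are checked directly from \eqref{F4}.

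Part (4) is the substantive step, and the main obstacle. Since $\Phi_{e}\in M^{!}_{0,\rho_{L_{N}}}$ is fixed by $\rho_{L_{N}}(\Gamma')$ — equivalently, $\Lambda_{e}$ is an isotropic subgroup and the over-lattice $L_{N}\subseteq M_{e}\subseteq L_{N}'$ with $M_{e}/L_{N}=\Lambda_{e}$ is unimodular of signature $(2,2)$, so $\rho_{M_{e}}$ is trivial — the push-forward $P_{e}(\tau):=\sum_{\mu\in\Lambda_{e}}(F_{N})_{\mu}(\tau)$ is a weakly holomorphic modular function of weight $0$ for $\SL_{2}(\Z)$. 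As $\mu_{0,0}\in\Lambda_{e}$ is the only coset whose $F_{N}$-component has a pole, $P_{e}$ has a simple pole of residue $1$ at $i\infty$, hence $P_{e}=j(\tau)+c_{e}$ and $\sum_{\mu\in\Lambda_{e}}c(0,\mu)=744+c_{e}$; for $e=1$ part (3) forces this sum to vanish, i.e.\ $P_{1}=j-744$, while part (4) is the claim that $c_{e}=-720$ for every $e\mid N$ with $e>1$ (for such $e$ the subgroup $\Lambda_{e}$ is exactly the index set $\{\mu_{j(N/d),kd}\}$ appearing in (4), with $e=N/d$). To pin down $c_{e}$ one invokes the obstruction principle: $F_{N}$ has principal part $q^{-1}\phi_{0,0}$, so for every holomorphic $g=\sum b_{g}(m,\mu)q^{m}\phi_{\mu}$ in $M_{2,\overline{\rho_{L_{N}}}}$ the weight-$2$ form $\sum_{\mu}(F_{N})_{\mu}g_{\mu}$ on $\SL_{2}(\Z)$ is holomorphic on $\HH$, so by the residue theorem its constant term vanishes, i.e.\ $b_{g}(1,\mu_{0,0})+\sum_{\mu}c(0,\mu)b_{g}(0,\mu)=0$; applying this to a suitable combination of weight-$2$ Eisenstein series for $\overline{\rho_{L_{N}}}$ with constant term $\Phi_{e}$, whose $q^{1}\phi_{0,0}$-coefficient is the universal $-24$ (the ``$E_{2}$'' coefficient, cf.\ the Bruinier--Kuss formulas), yields $\sum_{\mu\in\Lambda_{e}}c(0,\mu)=24$. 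Alternatively the same constant can be extracted directly from \eqref{fnpi} and \eqref{FN}, where it collapses to a weighted sum-of-constant-terms identity for $\pi_{N}$ over the cusps of $\Gamma_{1}(N)$. Producing the exact value $24$ — rather than merely ``a constant'' — is what requires genuine work, and the $N=4$ exception and the irregular cusps must be carried through all four parts.
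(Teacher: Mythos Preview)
Your treatment of (1)--(3) is essentially the paper's: it too deduces (1) from Lemmas~\ref{wt0} and~\ref{vecfn}, and reads (2)--(3) off the explicit $\phi_{0,0}$- and $\phi_{j,0}$-components in \eqref{fnpi} after the subtraction in \eqref{FN}/\eqref{F4}. Your framing via the basis $\{\Phi_{d}\}$ and the grouping of the correction term by $m_{s}$ is the same content slightly repackaged.

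The gap is in (4). Your push-forward observation is correct and elegant: $P_{e}=\sum_{\mu\in\Lambda_{e}}(F_{N})_{\mu}$ is weakly holomorphic of weight~$0$ on $\SL_{2}(\Z)$, hence $P_{e}=j+c_{e}$. But your device for pinning down $c_{e}$ fails for exactly the reason your own construction reveals. A holomorphic $g\in M_{2,\overline{\rho_{L_{N}}}}$ with constant term $\Phi_{e}$ cannot exist: the same push-forward along $\Lambda_{e}$ sends $g$ to a \emph{holomorphic} weight-$2$ form on $\SL_{2}(\Z)$ with nonzero constant term $|\Lambda_{e}|=N$, and $M_{2}(\SL_{2}(\Z))=0$. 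More generally, for every maximal isotropic $\Lambda_{d}$ one has $\sum_{\mu\in\Lambda_{d}}g_{\mu}\equiv 0$, so the constant-term vector of any holomorphic $g$ is orthogonal to every $\Phi_{d}$; the obstruction pairing therefore cannot see the quantity $\sum_{\mu\in\Lambda_{e}}c(0,\mu)$ at all. The ``universal $-24$'' has no carrier here.

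What the paper does is precisely your ``alternative,'' carried out. It first extracts from \eqref{fnpi} and \eqref{FN}/\eqref{F4} the explicit identity
\[
\sum_{k=0}^{N/d-1}\sum_{j=0}^{d-1}c(0,\mu_{jN/d,\,kd})
=-\,\kappa\sum_{s\in\mathcal{C}(\Gamma_{1}(N))}A_{s}(0)\,\frac{d\,(m_{s},N/d)^{2}-N}{m_{s}},
\]
and then evaluates the right-hand side by scalar-valued Serre duality on $\Gamma_{1}(N)$: pair the weight-$0$ form $\pi_{N}$ with the genuine holomorphic weight-$2$ form $H_{d}(\tau)=\tfrac{N}{d}E_{2}(\tfrac{N}{d}\tau)-E_{2}(\tau)\in M_{2}(\Gamma_{0}(N/d))\subset M_{2}(\Gamma_{1}(N))$, whose constant term at the cusp $s$ is $(m_{s},N/d)^{2}/(N/d)-1$ and whose $q^{1}$-coefficient at $i\infty$ is $24$. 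The vanishing of the pairing gives exactly the needed $24$. So the ``$-24$'' you were looking for lives on the level-$N$ curve, not in $M_{2,\overline{\rho_{L_{N}}}}$; producing the explicit $H_{d}$ and its cusp constants is the step you are missing.
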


\begin{Remark}
The different presentation of the case $N=4$ from the other cases mainly follows from the fact that $\Gamma_{0}(4)$ has an irregular cusp at~$\frac{1}{2}$.
\end{Remark}

\begin{proof}
Assertion (1) follows from Lemmas \ref{wt0} and \ref{vecfn}. 

By collecting the terms attached to $\phi_{0,0}$ in \eqref{fnpi}, we obtain that the $\phi_{0,0}$--component of $f_{N}$ is
\begin{align}
\label{phi00}
&\frac{2}{\lambda_{2,N}|\Gamma_{0}(N):\Gamma_{1}(N)|}\left[\sum_{\substack{s\in\mathcal{C}(\Gamma_{1}(N))\\\text{$s$ regular}}}\left(\left.\pi_{N}\right|M_{s}\right)_{0}+\sum_{\substack{s\in\mathcal{C}(\Gamma_{1}(N))\\\text{$s$ irregular}}}\frac{1}{h_{s}}\left(\left.\pi_{N}\right|M_{s}\right)\right]\nonumber\\
&=q^{-1}+\frac{2}{\lambda_{2,N}|\Gamma_{0}(N):\Gamma_{1}(N)|}\left[\sum_{\substack{s\in\mathcal{C}(\Gamma_{1}(N))\\\text{$s$ regular}}}\sum_{n=0}^{\infty}A_{s}(nh_{s})q^{n}+\sum_{\substack{s\in\mathcal{C}(\Gamma_{1}(N))\\\text{$s$ irregular}}}\frac{1}{h_{s}}\sum_{n=0}^{\infty}A_{s}(n)q^{n}\right]\nonumber\\
&=q^{-1}+\frac{2}{\lambda_{2,N}|\Gamma_{0}(N):\Gamma_{1}(N)|}\Bigg[\sum_{\substack{s\in\mathcal{C}(\Gamma_{1}(N))\\\text{$s$ regular}}}A_{s}(0)+\sum_{\substack{s\in\mathcal{C}(\Gamma_{1}(N))\\\text{$s$ irregular}}}\frac{1}{h_{s}}A_{s}(0)\\
&\qquad\qquad\qquad\qquad\qquad\qquad\qquad+\sum_{\substack{s\in\mathcal{C}(\Gamma_{1}(N))\\\text{$s$ regular}}}\sum_{n=1}^{\infty}A_{s}(nh_{s})q^{n}+\sum_{\substack{s\in\mathcal{C}(\Gamma_{1}(N))\\\text{$s$ irregular}}}\frac{1}{h_{s}}\sum_{n=1}^{\infty}A_{s}(n)q^{n}\Bigg].\nonumber
\end{align}
Then Assertion (2) follows from \eqref{phi00} and the definition of $F_{N}$.

By extracting the constant terms attached to $\phi_{j,0}$ for $1\leq j\leq N-1$ in \eqref{fnpi} of $F_{N}$, we obtain
\begin{equation}
\label{ext}
\frac{2}{\lambda_{2,N}|\Gamma_{0}(N):\Gamma_{1}(N)|}\left[\sum_{\substack{s\in\mathcal{C}(\Gamma_{1}(N))\\\text{$s$ regular}\\m_{s}\ne N}}A_{s}(0)\sum_{j=1}^{ h_{s}-1}\phi_{jm_{s},0}+\sum_{\substack{s\in\mathcal{C}(\Gamma_{1}(N))\\\text{$s$ irregular}}}\frac{1}{h_{s}}A_{s}(0)\sum_{j=1}^{ h_{s}-1}\phi_{jm_{s},0}\right].
\end{equation}
Then Assertion (3) follows from \eqref{ext} and the definition of $F_{N}$.

For Assertion (4), by \eqref{fnpi}, \eqref{FN} and \eqref{F4}, it can be verified case by case that
\begin{equation}
\label{sumc0k}
\sum_{k=0}^{\frac{N}{d}-1}\sum_{j=0}^{d-1}c(0,\phi_{j\frac{N}{d},kd})=-\frac{2}{\lambda_{2,N}|\Gamma_{0}(N):\Gamma_{1}(N)|}\sum_{s\in \mathcal{C}(\Gamma_{1}(N))}A_{s}(0)\left(\frac{d(m_{s},N/d)^{2}-N}{m_{s}}\right).
\end{equation}
Let 
$$
H_{d}(\tau)={\frac{N}{d}E_{2}\left(\frac{N}{d}\tau\right)-E_{2}(\tau)}
$$
where $E_{2}(\tau)$ is the normalized weight~2 Eisenstein series. It is well known \cite[Section 1.2]{diamond} that $H_{d}(\tau)$ is a weight~2 modular form for $\Gamma_{0}(N/d)$, so it is a weight ~2 modular form for $\Gamma_{1}(N)$. Let $B_{s}(0)$ be the constant term of the Fourier expansion of $H_{d}(\tau)$ at the cusp $s=a_{s}/c_{s}\in\mathcal{C}(\Gamma_{1}(N))$. It is easy to show that
$$
B_{s}(0)=\frac{(m_{s},N/d)^{2}}{N/d}-1
$$
and
$$
H_{d}(\tau)=\frac{N}{d}-1+{24}q+O(q^{2}).
$$
Since $\pi_{N}(\tau)$ is a weight~0 weakly holomorphic modular form for $\Gamma_{0}(N)$, then it is also on $\Gamma_{1}(N)$, and by Serre duality \cite[Theorem 3.1]{bor2}, we have
\begin{align}
\label{serdu}
0&=24\times\frac{\lambda_{2,N}|\Gamma_{0}(N):\Gamma_{1}(N)|}{2}+\sum_{\substack{s\in\mathcal{C}(\Gamma_{1}(N))}}h_{s}B_{s}(0)A_{s}(0)\nonumber\\
&=24\times\frac{\lambda_{2,N}|\Gamma_{0}(N):\Gamma_{1}(N)|}{2}+\sum_{\substack{s\in\mathcal{C}(\Gamma_{1}(N))}} h_{s}\left(\frac{(m_{s},N/d)^{2}}{N/d}-1\right)A_{s}(0)\nonumber\\
&=24\times\frac{\lambda_{2,N}|\Gamma_{0}(N):\Gamma_{1}(N)|}{2}+\sum_{s\in \mathcal{C}(\Gamma_{1}(N))}A_{s}(0)\left(\frac{d(m_{s},N/d)^{2}-N}{m_{s}}\right).
\end{align}
Finally, Assertion (4) follows from \eqref{sumc0k} and \eqref{serdu}.
\end{proof}

\section{Proofs of Theorem \ref{borthm} and Corollaries \ref{diffpiexp} and \ref{PN}}
\subsection{Proof of Theorem \ref{borthm}}
This subsection is devoted to the proof of our first main result, Theorem \ref{borthm}. We rely heavily on Borcherds Theorem, especially the third part of it. To assist in understanding the proof, we first review the definitions of the notation used in Theorem \ref{bors}(3) for our case. Let $M_{\ell}=L\cap(\mathbb{Q}\ell+\mathbb{Q}\ell')^{\perp}$ be the Lorentzian lattice of $L$ associated to $\ell$ and $\ell'$. Assume that $(\ell,L)=N_{\ell}\Z$. Choose a $\xi\in L$ such that $(\xi,\ell)=N_{\ell}$. Let $L_{\ell}'$ be a sublattice of $L'$ defined by
$$
L_{\ell}'=\{x\in L'|\,\text{$(\ell,x)\equiv0\pmod{N_{\ell}}$}\}.
$$
Then there is a projection
$$
p:L_{\ell}'\to M_{\ell}',\qquad p(x)=x_{M}-\frac{(x,\ell)}{N_{\ell}}\xi_{M},
$$
where $x_{M}$ and $\xi_{M}$ are the orthogonal projections of $x,\xi\in V$ to $M_{\ell,\Q}$. So it induces a projection, which is also denoted by $p$, from $L_{\ell}'/L$ to $M_{\ell}'/M_{\ell}$. Next, we define the Weyl chamber $W_{\ell_{M}}$ for 
$$
f=\sum_{\mu\in L'/L}f_{\mu}\phi_{\mu}\in M_{0,\omega_{L}}^{!}.
$$
Let $\Gr(M_{\ell})$ be the Grassmannian of negative lines of $M_{\ell,\RR}$, which is a real manifold of dimension~1. For $\lambda\in M_{\ell}'/M_{\ell}$ and $n\in\Q$, let 
$$
Z_{M}(n,\lambda)=\{z\in\Gr(M_{\ell})|\,\text{$z\perp x$ for some $x\in\lambda+M_{\ell}$ with $Q(x)=n$}\},
$$
which is either empty or a real divisor of $\Gr(M_{\ell})$. The Weyl chambers $W_{f}$ associated with $f$ are the connected components of 
$$
\Gr(M_{\ell})-\bigcup_{\mu\in L_{\ell}'/L}\bigcup_{\substack{n>0\\c(-n,\mu)\ne0}}Z_{M}(n,p(\mu)).
$$

Let $\ell_{M}\in M_{\ell}$ and $\ell_{M}'\in M_{\ell}'$ be isotropic elements with $(\ell_{M},\ell_{M}')=1$, whose existence is guaranteed in our case. For general case, we refer the reader to \cite[Section 9]{bor} and \cite[Section 5]{jan2}. Choose a Weyl chamber such that $\ell_{M}$ is contained in its closure, and denote such Weyl chamber by $W_{f,\ell_{M}}$. We now define its Weyl vector $\rho(W_{f,\ell_{M}},f)$. Define
$$
f_{M}=\sum_{\lambda\in M_{\ell}'/M_{\ell}}f_{M,\lambda}\phi_{M,\lambda}
$$
where $\phi_{M,\lambda}\in\C[M_{\ell}'/M_{\ell}]$ and 
$$
f_{M,\lambda}=\sum_{\substack{\mu\in L_{\ell}'/L\\ p(\mu)=\lambda+M}}f_{\mu}.
$$ 
Let $P_{\ell_{M}}$ be the Lorentzian sublattice of $M_{\ell}$ associated with $\ell_{M}$. Clearly, $P_{\ell_{M}}=\{0\}$ and $P_{\ell_{M}}'/P_{\ell_{M}}$ is trivial. Construct $f_{P}$ from $f_{M}$ in the same way as $f_{M}$ constructed from $f$, and obtain
$$
f_{P}=\sum_{\lambda\in M_{\ell,\ell_{M}}'/M_{\ell}}\sum_{\substack{\mu\in L_{\ell}'/L\\ p(\mu)=\lambda+M_{\ell}}}f_{\mu}.
$$
Then the Weyl vector $\rho(W_{f,\ell_{M}},f)$ is defined by
$$
\rho(W_{f,\ell_{M}},f)=\rho_{\ell_{M}}\ell_{M}+\rho_{\ell_{M}'}\ell_{M}'
$$
where
\begin{align*}
\rho_{\ell_{M}'}&=-1+\frac{c_{P}(0)}{24}=-1+\frac{1}{24}\sum_{\lambda\in M_{\ell,\ell_{M}}'/M_{\ell}}\sum_{\substack{\mu\in L_{\ell}'/L\\ p(\mu)=\lambda+M_{\ell}}}c(0,\mu),\\
\rho_{\ell_{M}}&=-\frac{1}{4}\sum_{\lambda\in M_{\ell,\ell_{M}}'/M_{\ell}}c_{M}(0,\lambda)B_{2}((\lambda,\ell_{M}')),
\end{align*}
$B_{2}(x)=\{x\}^{2}-\{x\}+\frac{1}{6}$ is the second Bernoulli polynomial, and $\{x\}:=x-[x]$.

\begin{proof}[Proof of Theorem \ref{borthm}]
Let $\ell_{s}=\begin{pmatrix}0&a_{s}\\0&c_{s}\end{pmatrix}\in L_{N}$ and $\ell_{s}'=\begin{pmatrix}-b_{s}&0\\-d_{s}&0\end{pmatrix}\in L_{N}'$. Then we have
$$
L_{N,\ell_{s}}'=\begin{pmatrix}\Z&\Z/N\\m_{s}\Z&\Z\end{pmatrix},\qquad L_{N,\ell_{s}}'/L_{N}=\left\{\mu_{j,m_{s}k}=\begin{pmatrix}0&j/N\\m_{s}k&0\end{pmatrix}\right\}_{\substack{0\leq j\leq N-1\\0\leq k\leq h_{s}-1}}
$$
$$
M_{s}:=M_{\ell_{s}}=\left\{\left.\begin{pmatrix} a_{s}h_{s}x&b_{s}y\\ c_{s}h_{s}x&d_{s}y\end{pmatrix}\right|\,x,y\in\Z\right\}\quad\mbox{and}\quad M_{s}':=M_{\ell_{s}}'=\left\{\left.\begin{pmatrix}a_{s}x& \frac{b_{s}}{h_{s}}y\\c_{s}x& \frac{d_{s}}{h_{s}}y\end{pmatrix}\right|\,x,y\in\Z\right\},
$$
and
$$
z_{\ell_{s}}=\begin{pmatrix}-a_{s}z_{1}&b_{s}z_{2}\\-c_{s}z_{1}&d_{s}z_{2}\end{pmatrix}\quad\mbox{and}\quad w(z_{\ell_{s}})=\begin{pmatrix}-a_{s}z_{1}-b_{s}&b_{s}z_{2}+a_{s}z_{1}z_{2}\\-c_{s}z_{1}-d_{s}&d_{s}z_{2}+c_{s}z_{1}z_{2}\end{pmatrix}.
$$
Also, we deduce that
$$
\Gr(M_{s})=\left\{\mathbb{R}\left.\begin{pmatrix}-a_{s}x&b_{s}\\-c_{s}x&d_{s}\end{pmatrix}\right|\,x>0\right\}
$$
and
\begin{align*}
Z_{M_{s}}(1,\mu_{0,0})&=\left\{Z\in\Gr(M_{s})|\,\text{$Z\perp X$ for some $X\in \mu_{0,0}+M_{s}$ with $Q(X)=1$}\right\}\\
&=\begin{cases}\varnothing&\mbox{if $m_{s}\ne N$,}\\\left\{\mathbb{R}\begin{pmatrix}-a_{s}&b_{s}\\-c_{s}&d_{s}\end{pmatrix}\right\}&\mbox{if $m_{s}=N$.}\end{cases}
\end{align*}
And thus
$$
\Gr(M_{s})-Z_{M_{s}}(1,\mu_{0,0})=\begin{cases}\Gr(M_{s})&\mbox{if $m_{s}\ne N$,}\\\left\{\mathbb{R}\left.\begin{pmatrix}-a_{s}x&b_{s}\\-c_{s}x&d_{s}\end{pmatrix}\right|\,0<x<1\right\}\cup\left\{\mathbb{R}\left.\begin{pmatrix}-a_{s}x&b_{s}\\-c_{s}x&d_{s}\end{pmatrix}\right|\,x>1\right\}&\mbox{if $m_{s}=N$.}\end{cases}
$$
Let $\ell_{M_{s}}=\begin{pmatrix} -a_{s}h_{s}&0\\ -c_{s}h_{s}&0\end{pmatrix}$ and $\ell_{M_{s}}'=\begin{pmatrix}0& -\frac{b_{s}}{h_{s}}\\0& -\frac{d_{s}}{h_{s}}\end{pmatrix}$. Then the Weyl chamber whose closure contains $\Q\ell_{M_{s}}$ is
$$
W_{\ell_{M_{s}}}=\begin{cases}\left\{\mathbb{R}\left.\begin{pmatrix}-a_{s}x&b_{s}\\-c_{s}x&d_{s}\end{pmatrix}\right|\,x>0\right\}&\mbox{if $m_{s}\ne N$,}\\\\\left\{\mathbb{R}\left.\begin{pmatrix}-a_{s}x&b_{s}\\-c_{s}x&d_{s}\end{pmatrix}\right|\,x>1\right\}&\mbox{if $m_{s}=N$,}\end{cases}
$$
and thus for $X=\begin{pmatrix}a_{s}n&- \frac{b_{s}}{h_{s}}m\\c_{s}n&- \frac{d_{s}}{h_{s}}m\end{pmatrix}\in M_{s}'$, we have that
\begin{equation}
\label{XW}
(X,W_{\ell_{M_{s}}})>0\quad\mbox{if and only if}\quad\begin{cases}\text{$m,\,n\geq0$ and $m^{2}+n^{2}>0$}&\mbox{if $m_{s}\ne N$,}\\
\text{$m\geq0$, $m+n\geq0$ and $m^{2}+n^{2}>0$}&\mbox{if $m_{s}= N$,}\end{cases}
\end{equation}
and we can check that $(X,z_{\ell_{s}})=\frac{m_{s}}{N}z_{1}m+z_{2}n=\frac{1}{h_{s}}z_{1}m+z_{2}n$.
We also have
$$
M_{s,\ell_{M_{s}}}'=\left\{\left.\begin{pmatrix}a_{s}x&b_{s}y\\c_{s}x&d_{s}y\end{pmatrix}\right|\,x,y\in\Z\right\}\quad\mbox{and}\quad M_{s,\ell_{M_{s}}}'/M_{s}=\left\{\begin{pmatrix}a_{s}k&0\\c_{s}k&0\end{pmatrix}\right\}_{0\leq k\leq h_{s}-1}.
$$
Let $\tilde{x},\tilde{y}\in\Z$ such that $c_{s}\tilde{x}-aN\tilde{y}=m_{s}$. Then $p:L_{N,\ell_{s}}'/L_{N}\to M_{s}'/M_{s}$ is
$$
p\left(\begin{pmatrix}0&j/N\\m_{s}k&0\end{pmatrix}\right)=\begin{pmatrix}a_{s}k\tilde{x}&-\frac{b_{s}c_{s}}{N}j\\c_{s}k\tilde{x}&-\frac{d_{s}c_{s}}{N}j\end{pmatrix},
$$
and thus
$$
F_{N,P}=\sum_{\lambda\in M_{s,\ell_{M_{s}}}'/M}\left(\sum_{\substack{\mu\in L_{N,\ell_{s}}'/L_{N}\\p(\mu)=\lambda}}F_{N,\phi_{\mu}}\right)=\sum_{k=0}^{ h_{s}-1}\sum_{j=0}^{m_{s}-1}F_{N,\phi_{j h_{s},km_{s}}}.
$$
Thus, 
$$
c_{P}(0,\phi_{0,0})=\sum_{k=0}^{ h_{s}-1}\sum_{j=0}^{m_{s}-1}c(0,\phi_{j h_{s},km_{s}})=\begin{cases}24&\mbox{if $m_{s}\ne N$,}\\0&\mbox{if $m_{s}=N$,}\end{cases}
$$
and
$$
\rho_{\ell_{M_{s}}'}=\begin{cases}0&\mbox{if $m_{s}\ne N$,}\\-1&\mbox{if $m_{s}=N$.}\end{cases}
$$
Therefore, the Weyl vector of $W_{\ell_{M_{s}}}$ associated to $F_{N}$ is
$$
\rho(W_{\ell_{M_{s}}},F_{N})=\begin{cases}\rho_{\ell_{M_{s}}}\ell_{M_{s}}&\mbox{if $m_{s}\ne N$,}\\-\ell_{M_{s}}'&\mbox{if $m_{s}=N$,}\end{cases}
$$
and thus
\begin{equation}
\label{weylvec}
(z_{\ell_{s}},\rho(W_{\ell_{M_{s}}},F_{N}))=\begin{cases}-h_{s}\rho_{\ell_{M_{s}}}z_{2}&\mbox{if $m_{s}\ne N$,}\\-z_{1}&\mbox{if $m_{s}=N$.}\end{cases}
\end{equation}
We can also show that $h_{s}\rho_{\ell_{M_{s}}}=1$, but we do not need this fact in our proof, so we leave the details of computations to the reader. Similarly, if we let $\ell_{s}=\begin{pmatrix}c_{s}&-a_{s}\\0&0\end{pmatrix}\in L_{N}$ and $\ell_{s}'=\begin{pmatrix}0&0\\d_{s}&-b_{s}\end{pmatrix}\in L_{N}'$, then we have 
$$
M_{s}=\left\{\left.\begin{pmatrix}-d_{s}y&b_{s}y\\ c_{s}h_{s}x&-a_{s}h_{s}x\end{pmatrix}\right|\,x,y\in\Z\right\}\quad\mbox{and}\quad M_{s}'=\left\{\left.\begin{pmatrix}-\frac{d_{s}}{h_{s}}y& \frac{b_{s}}{h_{s}}y\\c_{s}x&-a_{s}x\end{pmatrix}\right|\,x,y\in\Z\right\},
$$
$$
\ell_{M_{s}}=\begin{pmatrix}0&0\\ c_{s}h_{s}&- a_{s}h_{s}\end{pmatrix}\quad\mbox{and}\quad\ell_{M_{s}}'=\begin{pmatrix}- \frac{d_{s}}{h_{s}}& \frac{b_{s}}{h_{s}}\\0&0\end{pmatrix},
$$
$$
z_{\ell_{s}}=\begin{pmatrix}d_{s}z_{1}&-b_{s}z_{1}\\c_{s}z_{2}&-a_{s}z_{2}\end{pmatrix}\quad\mbox{and}\quad w(z_{\ell_{s}})=\begin{pmatrix}d_{s}z_{1}+c_{s}z_{1}z_{2}&-b_{s}z_{1}-a_{s}z_{1}z_{2}\\c_{s}z_{2}+d_{s}&-a_{s}z_{2}-b_{s}\end{pmatrix},
$$
$$
W_{\ell_{M_{s}}}=\begin{cases}\left\{\mathbb{R}\left.\begin{pmatrix}d_{s}&-b_{s}\\c_{s}x&-a_{s}x\end{pmatrix}\right|\,x>0\right\}&\mbox{if $m_{s}\ne N$,}\\\\\left\{\mathbb{R}\left.\begin{pmatrix}d_{s}&-b_{s}\\c_{s}x&-a_{s}x\end{pmatrix}\right|\,x>1\right\}&\mbox{if $m_{s}=N$,}\end{cases}
$$
\begin{equation}
\label{XW2}
(X,W_{\ell_{M_{s}}})>0\quad\mbox{if and only if}\quad\begin{cases}\text{$m,\,n\geq0$ and $m^{2}+n^{2}>0$}&\mbox{if $m_{s}\ne N$,}\\
\text{$n\geq0$, $m+n\geq0$ and $m^{2}+n^{2}>0$}&\mbox{if $m_{s}= N$,}\end{cases}
\end{equation}
for $X=\begin{pmatrix}- \frac{d_{s}}{h_{s}}n& \frac{b_{s}}{h_{s}}n\\-c_{s}m&a_{s}m\end{pmatrix}\in M_{s}'$, and
$$
\rho(W_{\ell_{M_{s}}},F_{N})=\begin{cases}\rho_{\ell_{M_{s}}}\ell_{M_{s}}&\mbox{if $m_{s}\ne N$,}\\-\ell_{M_{s}}'&\mbox{if $m_{s}=N$.}\end{cases}
$$
Also, we can check $(X,z_{\ell_{s}})=z_{1}m+\frac{1}{h_{s}}z_{2}n$, and
\begin{equation}
\label{vec2}
(z_{\ell_{s}},\rho(W_{\ell_{M_{s}}},F_{N}))=\begin{cases}-h_{s}\rho_{\ell_{M_{s}}}z_{1}&\mbox{if $m_{s}\ne N$,}\\-z_{2}&\mbox{if $m_{s}=N$.}\end{cases}
\end{equation}

Now we are ready for the proof of Theorem \ref{borthm}. We aim to show that
$$
\pi_{N}(z_{1})-\pi_{N}(z_{2})=\Psi(z,F_{N}).
$$
We first note by Borcherds Theorem, Proposition \ref{yyprop} and Lemma \ref{lemFN}(ii) that $\Psi(z,F_{N})$ can be viewed as either a meromorphic function on the Shimura variety $\mathcal{X}_{K_{N}}$ or a meromorphic function on $Y_{0}(N)\times Y_{0}(N)$, and $h_{N}(z_{1},z_{2}):=\pi_{N}(z_{1})-\pi_{N}(z_{2})$ is a meromorphic function on  $Y_{0}(N)\times Y_{0}(N)$.
Let 
$$
g(z_{1},z_{2})=\frac{\Psi(z,F_{N})}{h_{N}(z_{1},z_{2})}.
$$
Then $g(z_{1},z_{2})$ is a meromorphic function on $Y_{0}(N)\times Y_{0}(N)$ with no zeros or poles by Lemma \ref{zerolem}.
Let us fix $z_{2}\in\HH$. Then $g_{1}(z_{1}):=g(z_{1},z_{2})$ is a meromorphic function in $z_{1}$ on $Y_{0}(N)$. Let us investigate the behavior of $g_{1}(z_{1})$ at the cusps of $Y_{0}(N)$. By Proposition \ref{yyprop}, we can know that the Fourier expansion of $\Psi(z,F_{N})$ at the cusp $(s=a_{s}/c_{s},i\infty)$ is equal to the Borcherds product expansion of $\Psi(z,F_{N})$ at the cusp $\Q\ell_{s}$ where $\ell_{s}=\begin{pmatrix}0&a_{s}\\0&c_{s}\end{pmatrix}$. Then by Theorem \ref{bors}(3) together with \eqref{XW} and \eqref{weylvec}, we have
$$
\Psi(z,F_{N})=\begin{cases}\displaystyle{C_{s}q_{2}^{-h_{s}\rho_{\ell_{M_{s}}}}\prod_{\substack{m,n\geq0\\m^{2}+n^{2}>0}}\prod_{\substack{\mu\in L_{N,\ell_{s}}'/L_{N}\\ p(\mu)=\begin{pmatrix}a_{s}n& -\frac{b_{s}}{h_{s}}m\\c_{s}n& -\frac{d_{s}}{h_{s}}m\end{pmatrix}}}\left(1-q_{1}^{m/h_{s}}q_{2}^{n}e((\mu,\ell_{s}'))\right)^{c(mn,\phi_{\mu})}}&\mbox{if $m_{s}\ne N$,}\\
\displaystyle C_{s}(q_{1}^{-1}-q_{2}^{-1})\prod_{\substack{m\geq0\\m+n\geq0\\m^{2}+n^{2}>0}}\prod_{\substack{\mu\in L_{N,\ell_{s}}'/L_{N}\\ p(\mu)=\begin{pmatrix}a_{s}n& -{b_{s}}m\\c_{s}n& -{d_{s}}m\end{pmatrix}}}\left(1-q_{1}^{m}q_{2}^{n}e((\mu,\ell_{s}'))\right)^{c(mn,\phi_{\mu})}&\mbox{if $m_{s}=N$,}\end{cases}
$$
near the cusp $\Q\ell_{s}$ for some nonzero constant $C_{s}$ depending on the choice of cusp $s$, which can be identified by the condition given in Borcherds' Theorem. Then when $z_{2}\in\HH$ is fixed, as a meromorphic function on $Y_{0}(N)$, the order of $\Psi(z,F_{N})$ at the cusp $s=a_{s}/c_{s}$ can be computed 
$$
\mbox{ord}_{s}(\Psi(z,F_{N}))=\begin{cases}0&\mbox{if $m_{s}\ne N$,}\\-1&\mbox{if $m_{s}=N$.}\end{cases}
$$
Also, we can easily compute the Fourier expansion of $h_{N}(z_{1},z_{2})$ at the same cusp, and obtain the order of $h_{N}(z_{1},z_{2})$ at the cusp $s=a_{s}/c_{s}$ when $z_{2}\in\HH$ is fixed, which is
$$
\mbox{ord}_{s}(h_{N})=\begin{cases}0&\mbox{if $m_{s}\ne N$,}\\-1&\mbox{if $m_{s}=N$.}\end{cases}
$$
Therefore, as a meromorphic function on $Y_{0}(N)$, the modular function $g_{1}(z_{1})$ is holomorphic at all of the cusps, and thus $g(z_{1},z_{2})$ is constant on $Y_{0}(N)\times\{z_{2}\}$. Similarly, if we fix $z_{1}\in\HH$, by Theorem~\ref{bors}(3), \eqref{XW2} and \eqref{vec2}, we can show that $g(z_{1},z_{2})$ is constant on $\{z_{1}\}\times Y_{0}(N)$. Hence, $g(z_{1},z_{2})$ is constant on $Y_{0}(N)\times Y_{0}(N)$, which is~1 by comparing the Fourier expansions of $\Psi(z,F_{N})$ and $h_{N}(z_{1},z_{2})$ at $(i\infty,i\infty)=(1/N,1/N)$, and this completes the proof.
\end{proof}

\subsection{Proofs of Corollaries \ref{diffpiexp} and \ref{PN}}We end this section with the proofs to Corollaries \ref{diffpiexp} and \ref{PN}.
\begin{proof}[Proof of Corollary \ref{diffpiexp}]
We first note \eqref{fnpi}--\eqref{F4} that the $\phi_{j,0}$-component for $0\leq j\leq N-1$ of $F_{N}$ is 
\begin{align}
\label{cA}
&\sum_{\ell=-1}^{\infty}c(\ell,\mu_{j,0})q^{\ell}\\
&=\frac{2}{\lambda_{2,N}|\Gamma_{0}(N):\Gamma_{1}(N)|}\left\{\sum_{\substack{s\in\mathcal{C}(\Gamma_{1}(N))\\\text{$s$ regular}\\m_{s}|j}}\left[\left(\left.\pi_{N}\right|M_{s}\right)_{0}-A_{s}(0)\right]+\sum_{\substack{s\in\mathcal{C}(\Gamma_{1}(N))\\\text{$s$ irregular}\\m_{s}|j}}\frac{1}{h_{s}}\left[\left(\left.\pi_{N}\right|M_{s}\right)-A_{s}(0)\right]\right\}\nonumber\\
&=\sum_{\substack{d|N\\d|j}}\frac{2}{\lambda_{2,N}|\Gamma_{0}(N):\Gamma_{1}(N)|}\left\{\sum_{\substack{s\in\mathcal{C}(\Gamma_{1}(N))\\\text{$s$ regular}\\m_{s}=d}}\left[\left(\left.\pi_{N}\right|M_{s}\right)_{0}-A_{s}(0)\right]+\sum_{\substack{s\in\mathcal{C}(\Gamma_{1}(N))\\\text{$s$ irregular}\\m_{s}=d}}\frac{1}{h_{s}}\left[\left(\left.\pi_{N}\right|M_{s}\right)-A_{s}(0)\right]\right\}\nonumber\\
&=\sum_{\substack{d|N\\d|j}}\sum_{\ell=-1}^{\infty}A(\ell,d)q^{\ell}\nonumber\\
&=\sum_{\ell=-1}^{\infty}\left(\sum_{\substack{d|N\\d|j}}A(\ell,d)\right)q^{\ell}.
\end{align}
Now from the proof of Theorem \ref{borthm} and \eqref{cA} together with Borcherds Theorem, we deduce that
\begin{align*}
\pi_{N}(z_{1})-\pi_{N}(z_{2})
&=(q_{1}^{-1}-q_{2}^{-1})\prod_{m,n>0}\prod_{j=0}^{N-1}\prod_{\substack{d|N\\d|j}}\left(1-q_{1}^{m}q_{2}^{n}e\left(-j/N\right)\right)^{A(mn,d)}\\
&=(q_{1}^{-1}-q_{2}^{-1})\prod_{m,n>0}\prod_{{d|N}}\prod_{j'=0}^{\frac{N}{d}-1}\left(1-q_{1}^{m}q_{2}^{n}e\left(-j'd/N\right)\right)^{A(mn,d)}\\
&=(q_{1}^{-1}-q_{2}^{-1})\prod_{m,n>0}\prod_{{d|N}}\left(1-\left(q_{1}^{m}q_{2}^{n}\right)^{\frac{N}{d}}\right)^{A(mn,d)}.
\end{align*}
\end{proof}

\begin{proof}[Proof of Corollary \ref{PN}]
Taking the logarithmic derivative of both sides of \eqref{prodana}, we have that
\begin{align*}
&-\frac{1}{2\pi i}\frac{\pi_{N}'(z_{1})}{\pi_{N}(z_{1})-\pi_{N}(z_{2})}\\
&=\frac{1}{1-q_{2}^{-1}q_{1}}+\sum_{m,n>0}\sum_{d|N}\frac{A(mn,d)\left(q_{1}^{m}q_{2}^{n}\right)^{\frac{N}{d}}}{1-\left(q_{1}^{m}q_{2}^{n}\right)^{\frac{N}{d}}}\\
&=\sum_{n=0}^{\infty}(q_{2}^{-1}q_{1})^{n}+\sum_{m,n>0}\sum_{d|N}\sum_{\ell=1}^{\infty}A(mn,d)\left(q_{1}^{m}q_{2}^{n}\right)^{\frac{N}{d}\ell},
\end{align*}
and this proves the corollary.
\end{proof}


\part{Gross--Zagier Type CM Value Formulas}
\section{Big CM Cycles and Big CM Value Formula}
In this section, we briefly review the concepts of big CM cycles and big CM value formula (see\cite[Sec. 2--4]{bky} for details), based on which we realize a big CM cycle in the degenerate Hilbert modular surface $Y_{0}(N)\times Y_{0}(N)$ and prove Theorem~\ref{gztype} at the end of Subsection~\ref{pfgz}.

\subsection{Big CM Cycles in $\mathcal{X}_{K}$}
Let $F$ be a totally real number field of degree $d+1$ and $W$ be an $F$-quadratic space with an $F$-quadratic form $Q_{F}(\cdot)$ of signature $((2,0),\ldots,(2,0),(0,2))$ with respect to the $d+1$ embeddings $\{\sigma_{i}\}_{i=1}^{d+1}$ of $F$ such that ${\rm Res}_{F/\mathbb{Q}}W$ is a rational quadratic space of signature $(2d,2)$ with the quadratic form $Q(\cdot)={\rm tr}_{F/\mathbb{Q}}\circ Q_{F}(\cdot)$ induced from $Q_{F}(\cdot)$. Then we have an orthogonal direct sum decomposition
$$
{\rm Res}_{F/\mathbb{Q}}W=\bigoplus_{i=1}^{d+1}W_{\sigma_{i}}
$$
where $W_{\sigma_{i}}=W\otimes_{F,\sigma_{i}}\mathbb{R}$. The negative 2-plane $W_{\sigma_{d+1}}$ gives rise to a two-point (big CM points) subset $\{z_{\sigma_{d+1}}^{\pm}\}$ of $\mathbb{D}$. Let $T$ be the preimage of ${\rm Res}_{F/\mathbb{Q}}{\rm SO}(W)\subset {\rm SO}({\rm Res}_{F/\mathbb{Q}}W)$ in $H$, the general spin group of ${\rm Res}_{F/\mathbb{Q}}W$. Then we have the following commutative diagram.
\begin{center}
$\begin{CD}
1 @>>> \mathbb{G}_{m} @>>> T @>>> {\rm Res}_{F/\mathbb{Q}}{\rm SO}(W) @>>> 1 \\
 @.  @VVV @VVV @VVV @. \\ 
 1 @>>> \mathbb{G}_{m} @>>> H @>>> {\rm SO}(V) @>>> 1
\end{CD}$
\end{center}
and this implies that $T$ is a maximal torus associated to the CM number field $E=F(\sqrt{-\det W})$. And we obtain a so called big CM cycle in $\mathcal{X}_{K}$, the Shimura variety associated to the compact open subgroup $K$,
$$
Z(W,z_{\sigma_{d+1}}^{\pm})=T(\mathbb{Q})\backslash\left(\{z_{\sigma_{d+1}}^{\pm}\}\times T(\mathbb{A}_{\Q,f})/K_{T}\right),
$$
where $K_{T}=K\cap T(\mathbb{A}_{\Q,f})$. The CM cycle $Z(W,z_{\sigma_{d+1}}^{\pm})$ is defined over $F$, and the formal sum $Z(W)$ of all of its Galois conjugates is a 0-cycle in $\mathcal{X}_{K}$ defined over $\Q$.


\subsection{Big CM Value Formula} 
\label{bigcmfor}
Associated to the $F$-quadratic space $W$ and the additive adelic character $\psi_{F}=\psi\circ {\rm tr}_{F/
\mathbb{Q}}$ is a Weil representation $\omega=\omega_{\psi_{F}}$ of ${\rm SL}_{2}(\mathbb{A}_{\Q,f})$ (and thus $T(\mathbb{A_{\Q}})$) on $\mathcal{S}(W_{\mathbb{A}_{F}})=\mathcal{S}(V_{\mathbb{A}_{\mathbb{Q}}})$. Let $\chi=\chi_{E/F}$ be the quadratic Hecke character of $F$ associated to $E/F$. Then $\chi=\chi_{W}$ is also the quadratic Hecke character of $F$ associated to $W$, and there is an ${\rm SL}_{2}(\mathbb{A}_{F})$-equivariant map
$$
\lambda=\prod_{\nu}\lambda_{\nu}:\mathcal{S}(W_{\mathbb{A}_{F}})\to I(0,\chi)
$$
via $\lambda(\phi)(g)=\omega(g)\phi(0)$. where $I(s,\chi)={\rm Ind}^{{\rm SL}_{2}(\mathbb{A}_{F})}_{B_{\mathbb{A}_{F}}}\chi|\cdot|^{s}$ is the principal series whose elements are smooth functions $\Phi$ on ${\rm SL}_{2}(\mathbb{A}_{F})$ satisfying
$$
\Phi(n(b)m(a)g,s)=\chi(a)|a|^{s+1}\Phi(g,s)
$$
for $b\in \mathbb{A}_{F}$ and $a\in\mathbb{A}_{F}^{\times}$ with $n(b)m(a)\in B_{F}$, where
$$
n(b):=\begin{pmatrix}1&b\\0&1\end{pmatrix}\quad\mbox{and}\quad m(a):=\begin{pmatrix}a&0\\0&a^{-1}\end{pmatrix}
$$
and $B_{\mathbb{A}_{F}}$  the standard Borel subgroup of ${\rm SL}_{2}(\mathbb{A}_{F})$. Such an element is called factorizable if $\Phi=\otimes\Phi_{\nu}$ with $\Phi_{\nu}\in I(s,\chi)$. It is called standard if $\Phi_{{\rm SL}_{2}(\hat{\mathcal{O}}_{F}){\rm SO}_{2}(\mathbb{R})^{2}}$ is independent of $s$. For a standard element $\Phi$, its associated Eisenstein series is defined for $\Re(s)\gg0$ by
$$
E(g,s,\Phi)=\sum_{\gamma\in B_{\mathbb{A}_{F}}\backslash {\rm SL}_{2}(\mathbb{A}_{F})}\Phi(\gamma g,s).
$$

For $\phi\in\mathcal{S}(V_{\mathbb{A}_{\Q,f}})=\mathcal{S}(W_{\mathbb{A}_{F,f}})$, let $\Phi_{f}$ be the standard element associated to $\lambda_{f}(\phi)\in I(0,\chi)$. For each real embedding $\sigma_{i}$ of $F$, let $\Phi_{\sigma_{i}}\in I(s,\chi_{\mathbb{C}/\mathbb{R}})=I(s,\chi_{E_{\sigma_{i}}/F_{\sigma_{i}}})$ be the unique `weight one' eigenvector of ${\rm SL}_{2}(\mathbb{R})$ given by
$$
\Phi_{\sigma_{i}}(n(b)m(a)k_{\theta})=\chi_{\mathbb{C}/\mathbb{R}}(a)|a|^{s+1}e^{i\theta}
$$
for $b\in \mathbb{R}$, $a\in\mathbb{R}^{\times}$ and $k_{\theta}=\begin{pmatrix}\cos\theta&-\sin\theta\\\sin\theta&\cos\theta\end{pmatrix}\in {\rm SO}_{2}(\mathbb{R})$. We define for $\vec\tau=(\tau_{1},\ldots,\tau_{d+1})\in\mathbb{H}^{d+1}$,
$$
E(\vec\tau,s,\phi)=\left(\prod_{i=1}^{d+1}v_{i}\right)^{-\frac{1}{2}}E(g_{\vec\tau},s,\Phi_{f}\otimes\Phi_{\sigma_{1}}\otimes\Phi_{\sigma_{2}}),
$$
where $\tau_{i}=u_{i}+iv_{i}$ and $g_{\vec\tau}=(n(u_{i})m(\sqrt{v_{i}}))_{1\leq i\leq d+1}$. It is a non-holomorphic Hilbert modular form of scalar weight~1 for some congruence subgroup of ${\rm SL}_{2}(\mathcal{O}_{F})$. We normalize $E(\vec\tau,s,\phi)$ by
$$
E^{*}(\vec\tau,s,\phi)=\Lambda(s+1,\chi)E(\vec\tau,s,\phi)
$$
where 
$$
\Lambda(s,\chi)=\left({\rm N}_{F/\Q}(\partial_{F}d_{E/F})\right)^{\frac{s}{2}}\left(\pi^{-\frac{s+1}{2}}\Gamma\left(\frac{s+1}{2}\right)\right)^{d+1}L(s,\chi).
$$
We write the Fourier expansion of $E^{*}(\vec{\tau},s,\phi)$ as
\begin{equation}
\label{Et}
E^{*}(\vec{\tau},s,\phi)=E_{0}^{*}(\vec{\tau},s,\phi)+\sum_{t\in F^{\times}}E_{t}^{*}(\vec{\tau},s,\phi).
\end{equation}
If one assumes that $\phi$ is factorizable, then 
\begin{align}
\label{wtphi}
E^{*}_{t}(\vec{\tau},s,\phi)&=\prod_{\mathfrak{p}\nmid\infty}W^{*,\psi_{F}}_{t,\mathfrak{p}}(s,\phi)\prod_{i=1}^{d+1}W^{*}_{t,\sigma_{i}}(\tau_{i},s,\Phi_{\sigma_{i}})\\
\intertext{and}
\label{w00}
E_{0}^{*}(\vec{\tau},s,\phi)&=\phi(0)\Lambda(s+1,\chi)\left(\prod_{i=1}^{d+1}v_{i}\right)^{\frac{s}{2}}\\
&\quad+\left(\prod_{i=1}^{d+1}v_{i}\right)^{-\frac{s}{2}}\prod_{\mathfrak{p}<\infty}W_{0,\mathfrak{p}}^{*,\psi_{F}}(s,\phi)\prod_{i=1}^{d+1}\gamma(W_{\sigma_{i}}),
\end{align}
where
\begin{align}
\label{wtw}
W_{t,\mathfrak{p}}^{*,\psi_{F}}(s,\phi)&=|{\rm N}_{F/\Q}(\partial_{F}d_{E/F})|_{\mathfrak{p}}^{-\frac{s+1}{2}}L_{\mathfrak{p}}(1+s,\chi)W_{t,\mathfrak{p}}^{\psi_{F}}(s,\phi)
\intertext{and}
\label{wreal}
W^{*}_{t,\sigma_{i}}(\tau_{i},s,\Phi_{\sigma_{i}})&=v_{i}^{-1/2}\pi^{-\frac{s+2}{2}}\Gamma\left(\frac{s+2}{2}\right)W_{t,\sigma_{i}}^{\psi_{F}}(\tau_{i},s,\Phi_{\sigma_{i}}).
\end{align}
are the normalized local Whittaker functions, and $\gamma(W_{\sigma_{i}})$ are Weil indices (see, e.g., \cite{KRY, Ya}). Moreover, for $m>0$, we write
$$
a_{m}(\phi)=\sum_{\substack{t\in F_{+}^{\times}\\ {\rm tr}_{F/\mathbb{Q}}t=m}}a(t,\phi),
$$
where $F_{+}^{\times}$ consists of all totally positive elements in $F$, the term $a(t,\phi)$ is the $t$-th Fourier coefficient of $E^{*,'}(\tau^{\Delta},0,\phi)$ and $\tau^{\Delta}=(\tau,\ldots,\tau)$ the diagonal element, and write the constant term of $E^{*,'}(\vec\tau,s,\phi)$ as
$$
\phi(0)\Lambda(0,\chi)\log\left(\prod_{i=1}^{d+1}v_{i}\right)+a_{0}(\phi)
$$
for a constant $a_{0}(\phi)$ depending on $\phi$. One can check that $a(t,\phi)=0$ unless $t-Q_{F}(\mu)\in\partial_{F}^{-1}$ where $\mu$ is the element in $F$ associated with $\phi$ (see., e.g., \cite{KRY, Ya}).

Now we are ready to state the big CM value formula due to Bruinier, Kudla and Yang \cite[Thm.~5.2]{bky}, which expresses the sum of the values of a theta-lift on a Shimura variety $\mathcal{X}_{K}$ over a big CM cycle in terms of the coefficients of an incoherent Eisenstein series of weight~1.
\begin{Theorem}[Bruinier, Kudla, and Yang]
\label{bcm}
For a given $f\in M_{1-d,\omega_{L}}^{!}$ with 
$$
f(\tau)=\sum_{\mu\in L'/L}\sum_{\substack{n\in-Q(\mu)+\Z\\n\gg-\infty}}c(n,\mu)q^{n}\phi_{\mu},
$$
let $\Phi(Z,f)$ be the theta-lift defined as in Subsection~\ref{thetalift}. Then we have
$$
\sum_{Z\in Z(W)}\Phi(Z,f)=\frac{{\rm deg}(Z(W,z_{\sigma_{d+1}}^{\pm}))}{\Lambda(0,\chi)}\sum_{\substack{\mu\in L'/L\\m\in Q(\mu)+\mathbb{Z}\\m\geq0}}c(-m,\mu)a_{m}(\phi_{\mu}).
$$
\end{Theorem}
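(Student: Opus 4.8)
The statement is \cite[Thm.~5.2]{bky}, and we indicate the strategy of proof. The plan is to evaluate the regularized theta integral $\Phi(Z,f)$ directly on the big CM cycle by means of the seesaw dual pair $(\SL_{2},\mathrm{O}(W))$ over $F$ sitting inside $(\SL_{2},\mathrm{O}(V))$ over $\Q$, and then to unfold the resulting Eisenstein series against the weakly holomorphic form $f$. First I would restrict the Siegel theta kernel $\theta(\tau,Z,h;\phi_{\mu})$ to the CM points $Z=z_{\sigma_{d+1}}^{\pm}$ and to $h\in T(\mathbb{A}_{\Q,f})$, and average over the finite set $T(\Q)\backslash T(\mathbb{A}_{\Q,f})/K_{T}$, which is finite because $W$ is anisotropic over $F$. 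Under the seesaw this average becomes the integral over $T(\Q)\backslash T(\mathbb{A}_{\Q})$ of the theta kernel of the rank-two $F$-quadratic space $W$, evaluated along the diagonal $\SL_{2}(\mathbb{A}_{\Q})\hookrightarrow\SL_{2}(\mathbb{A}_{F})$; applying the Siegel--Weil formula in the convergent anisotropic range identifies it, up to a constant involving $\mathrm{vol}(K_{T})$ --- equivalently $\deg Z(W,z_{\sigma_{d+1}}^{\pm})$ --- and the completed $L$-value $\Lambda(0,\chi)$, with the diagonal restriction of the central derivative $E^{*,\prime}(\vec\tau,0,\phi_{\mu})$ of the incoherent Eisenstein series \eqref{Et}. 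The incoherence is forced by the archimedean data, namely the weight-one sections $\Phi_{\sigma_{i}}$ attached to $W$, which is positive definite at $\sigma_{1},\ldots,\sigma_{d}$ and negative definite at $\sigma_{d+1}$; this makes $E^{*}(\vec\tau,0,\phi_{\mu})=0$, so the first-order term carries the local Whittaker functions of \eqref{w00}--\eqref{wreal}, whose Fourier coefficients are the $a(t,\phi_{\mu})$ and whose constant term is $\phi_{\mu}(0)\Lambda(0,\chi)\log(\prod_{i}v_{i})+a_{0}(\phi_{\mu})$.

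Next I would insert this identity into the definition of $\Phi(Z,f)$ from Subsection~\ref{thetalift}. Summing over $Z(W)$ gives
\[
\sum_{Z\in Z(W)}\Phi(Z,f)=\frac{\deg Z(W,z_{\sigma_{d+1}}^{\pm})}{\Lambda(0,\chi)}\,\underset{s=0}{{\rm CT}}\;\lim_{t\to\infty}\int_{\mathcal{F}_{t}}\Big\langle f(\tau),\,\mathcal{E}(\tau,s)\Big\rangle\,v^{-2}\,du\,dv,
\]
where $\mathcal{E}(\tau,s)$ denotes the diagonal restriction of the normalized incoherent Eisenstein series assembled from the $\phi_{\mu}$-components. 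One then unfolds: expressing $\mathcal{E}$ as a sum over $B(\Q)\backslash\SL_{2}(\Q)$ and using that $\mathcal{F}$ is a fundamental domain for $\SL_{2}(\Z)$, the Rankin--Selberg unfolding collapses the integral to a pairing of Fourier coefficients; since $f$ is weakly holomorphic of the matching weight $1-d$, the regularization leaves only the pairing of the principal part and the constant term of $f$ against the Fourier coefficients of $\mathcal{E}$. Because $\mathcal{E}$ vanishes at $s=0$, taking the constant term at $s=0$ amounts to extracting the first $s$-derivative, and one arrives at $\sum_{\mu}\sum_{m\geq0}c(-m,\mu)\,a_{m}(\phi_{\mu})$, with the $\log(\prod_{i}v_{i})$-type terms accounted for by the definition of $a_{0}$ and by the $2\log|Y|$ term appearing in Theorem~\ref{bors}(2).

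The hard part will be the analytic bookkeeping near $s=0$: the weight-one Eisenstein series of a rank-two quadratic space over $F$ lies exactly at the edge of absolute convergence, so one must justify interchanging $\lim_{t\to\infty}$, the Laurent expansion in $s$, and the unfolding, control the polynomial growth of the non-holomorphic Whittaker terms, and pin down the normalizing constant $\deg Z(W,z_{\sigma_{d+1}}^{\pm})/\Lambda(0,\chi)$, including the Weil indices $\gamma(W_{\sigma_{i}})$ and the $\Gamma$-factors of \eqref{wtw}--\eqref{wreal}. The conceptual crux is the incoherence together with the vanishing $E^{*}(\vec\tau,0,\phi_{\mu})=0$, which is precisely what converts the CM value into a central derivative and produces the coefficients $a_{m}(\phi_{\mu})$.
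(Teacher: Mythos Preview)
The paper does not prove this theorem; it simply quotes \cite[Thm.~5.2]{bky} and uses the result as a black box. So there is no ``paper's own proof'' to compare against beyond the citation, and your sketch is in fact an outline of the argument in \cite{bky} itself. In that sense your proposal is appropriate and captures the architecture of the BKY proof: restrict the Siegel theta kernel to the big CM cycle, use the seesaw for $(\SL_{2}/\Q,\ \mathrm{O}(V))$ versus $(\SL_{2}/F,\ \mathrm{O}(W))$, invoke Siegel--Weil for the anisotropic binary space $W$ over $F$, and then compute the regularized pairing with $f$ by a Rankin--Selberg type unfolding.

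One point deserves sharpening. You write that Siegel--Weil ``identifies'' the averaged theta kernel with the central \emph{derivative} $E^{*,\prime}(\vec\tau,0,\phi_{\mu})$ of the incoherent Eisenstein series. That is not what Siegel--Weil gives: it produces the \emph{value} at $s=0$ of a \emph{coherent} Eisenstein series, the one whose archimedean sections match the signatures of $W_{\sigma_i}$ at every infinite place. The incoherent series in \eqref{Et}--\eqref{wreal} differs from this coherent one at the single place $\sigma_{d+1}$, and it is only after inserting the Siegel--Weil identity into the regularized integral and carrying out the truncated unfolding (with the auxiliary $s$-parameter built into the definition of $\Phi$) that the derivative of the incoherent series emerges; the mechanism is that the coherent value and the incoherent derivative have matching Fourier coefficients for totally positive $t$, while the mismatch is pushed into the constant and non-holomorphic terms, which are exactly what the regularization and the $a_{0}(\phi_{\mu})$ bookkeeping absorb. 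Your last two paragraphs gesture at this, but the sentence attributing the derivative directly to Siegel--Weil conflates two distinct steps. Apart from this, your outline is faithful to \cite{bky}.
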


\subsection{Proof of Theorem \ref{gztype}}\label{pfgz} This subsection is devoted to the proof of our second main result, Theorem~\ref{gztype}. We first realize and interpret big CM cycles in the degenerate Hilbert modular surfaces for $\Gamma_{0}(N)$ under the identification $Y_{0}(N)\times Y_{0}(N)\cong \mathcal{X}_{K_{N}}$ as we have seen in Subsection~\ref{hil}. Then we conclude this section with the proof.

  Let $E_{i}=\mathbb{Q}(\sqrt{d_{i}})$ for $i=1,2$ with $(d_{1},d_{2})=1$ be two imaginary quadratic fields of fundamental discriminants $d_{i}$, and let $E=E_{1}\otimes E_{2}=\mathbb{Q}(\sqrt{d_{1}},\sqrt{d_{2}})$. Let $F=\mathbb{Q}(\sqrt{D})$ with $D=d_{1}d_{2}$ be the maximal totally real subfield of $E$. We can view $E$ as a $F$-quadratic space $W$ with $F$-quadratic form $Q_{F}(z)=\frac{z\bar{z}}{\sqrt{D}}$. Then we can also view it as a rational quadratic space ${\rm Res}_{F/\Q}W$ with quadratic form $Q(z)={\rm tr}_{F/\mathbb{Q}}\circ Q_{F}(z)$. Let $\sigma_{i}$ for $i=1,2$ be the two real embeddings of $F$ with $\sigma_{i}(\sqrt{D})=(-1)^{i-1}\sqrt{D}$. Then we can see that $W_{\sigma_{1}}$ has signature $(2,0)$ at $\sigma_{1}$ and $W_{\sigma_{2}}$ has signature $(0,2)$ at $\sigma_{2}$. We choose a $\mathbb{Z}$-basis for $\mathcal{O}_{E}$ as follows
$$
e_{1}=1\otimes1,\quad e_{2}=\frac{-d_{1}+\sqrt{d_{1}}}{2}\otimes1,\quad e_{3}=1\otimes\frac{d_{2}+\sqrt{d_{2}}}{2},\quad e_{4}=e_{2}e_{3},
$$
and throughout the remainder of this section, we will drop $\otimes$ when there is no ambiguity. Then we can identify $({\rm Res}_{F/\Q}W,Q(\cdot))$ with $(V=M_{2}(\mathbb{Q}),\det)$ considered in Subsection \ref{hil} by
$$
\sum_{i=1}^{4}x_{i}e_{i}\to\begin{pmatrix}x_{3}&x_{1}\\x_{4}&x_{2}\end{pmatrix}.
$$
In particular, under this identification, we have
\begin{equation}
\label{LNZ}
L_{N}\cong\mathbb{Z}+\mathbb{Z}\frac{-d_{1}+\sqrt{d_{1}}}{2}+\mathbb{Z}\frac{d_{2}+\sqrt{d_{2}}}{2}+\mathbb{Z}\frac{N(-d_{1}+\sqrt{d_{1}})(d_{2}+\sqrt{d_{2}})}{4}
\end{equation}
which is of index~$N$ in $\mathcal{O}_{E}$.

In such a case, the maximal torus $T$ over $\mathbb{Q}$ is given by (see \cite{hy} or \cite[Section 6]{bky})
$$
T(R)=\{(t_{1},t_{2})\in(E_{1}\otimes_{\mathbb{Q}}R)^{\times}\times(E_{2}\otimes_{\mathbb{Q}}R)^{\times}|\,t_{1}\bar{t}_{1}=t_{2}\bar{t}_{2}\}
$$
for any $\mathbb{Q}$-algebra $R$. Then the map from $T$ to $E$ is given by $(t_{1},t_{2})\to t_{1}/\bar{t}_{2}$. By the theory of complex multiplication \cite{shi}, there is an embedding
$$
\imath_{i}:E_{i}\to M_{2}(\mathbb{Q})
$$
such that
$$
\imath_{i}(t)\begin{pmatrix}e_{i+1}\\e_{1}\end{pmatrix}=\begin{pmatrix}te_{i+1}\\te_{1}\end{pmatrix}.
$$
Then $\imath=(\imath_{1},\imath_{2})$ gives the embedding from $T$ to $H$, and one has
$$
K_{N,T}:=K_{N}\cap T(\mathbb{Q})=\{(t_{1},t_{2})\in T(\mathbb{A}_{f})|\,t_{i}\in \imath^{-1}_{i}(K_{0}(N))\}.
$$
In the following, we will interpret the big CM cycle
$$
Z(W,z_{\sigma_{2}}^{\pm})= T(\mathbb{Q})\backslash \left(\{z_{\sigma_{2}}^{\pm}\}\times T(\mathbb{A}_{\Q,f})/K_{N,T}\right)
$$
in $\mathcal{X}_{K_{N}}$ as a 0-cycle in $Y_{0}(N)\times Y_{0}(N)$.

\begin{Lemma}
\label{ztau}
Under the identification between $\mathbb{D}$ and $\mathbb{H}^{2}\cup(\mathbb{H}^{-})^{2}$ (see Proposition \ref{yyprop}), the big CM points $z_{\sigma_{2}}$ and $z_{\sigma_{2}}^{-}$ are identified with $(\tau_{1},\tau_{2})\in\mathbb{H}^{2}$ and $(-\bar{\tau}_{1},-\bar{\tau}_{2})\in(\mathbb{H}^{-})^{2}$, respectively, where
$$
\tau_{i}=\frac{d_{i}+\sqrt{d_{i}}}{2}.
$$
\end{Lemma}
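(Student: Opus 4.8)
The plan is to identify the big CM points $z_{\sigma_2}^{\pm}$ with the fixed points of the torus $T(\RR)$ on $\D$, and then to read those fixed points off directly from the explicit embeddings $\imath_i$ used to define $T\hookrightarrow H$. By construction $\{z_{\sigma_2}^{\pm}\}$ is precisely the fixed locus of $T(\RR)$ acting on $\D$ through $\imath=(\imath_1,\imath_2)$, so the task reduces to locating that fixed locus inside $\HH^2\cup(\HH^-)^2$ under the identification of Proposition \ref{yyprop}.

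First I would note that $T(\RR)=\{(t_1,t_2)\in(E_1\otimes\RR)^{\times}\times(E_2\otimes\RR)^{\times}\,:\,t_1\bar t_1=t_2\bar t_2\}$ is connected, hence preserves each of the two connected components $\HH^2$ and $(\HH^-)^2$ of $\D$; so it has exactly one fixed point in each, and these are $z_{\sigma_2}$ and $z_{\sigma_2}^-$. Under the identification of Proposition \ref{yyprop}, $T(\RR)$ acts diagonally by fractional linear transformations, $(t_1,t_2)\cdot(z_1,z_2)=(\imath_1(t_1)\cdot z_1,\,\imath_2(t_2)\cdot z_2)$, and the norm condition still allows the projection of $T(\RR)$ to each factor $(E_i\otimes\RR)^{\times}\cong\C^{\times}$ to be surjective; hence a fixed point $(z_1,z_2)$ must have each $z_i$ fixed by the whole image $\imath_i(E_i^{\times})$. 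Now fix a complex embedding of $E_i$ (a choice of $\sqrt{d_i}$ in the upper half plane). The defining relation $\imath_i(t)\begin{pmatrix}e_{i+1}\\e_1\end{pmatrix}=t\begin{pmatrix}e_{i+1}\\e_1\end{pmatrix}$ exhibits $\begin{pmatrix}e_{i+1}\\e_1\end{pmatrix}$ as a common eigenvector of all the matrices $\imath_i(t)$, so the associated fractional linear maps all fix the ratio $e_{i+1}/e_1=e_{i+1}$. Thus the fixed point in $\HH^2$ is $(e_2,e_3)$ and the fixed point in $(\HH^-)^2$ is its componentwise complex conjugate $(\bar e_2,\bar e_3)$ (coming from the opposite embeddings, i.e.\ the opposite orientation of the negative $2$-plane $W_{\sigma_2}$). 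Finally I would match with $\tau_i$: one has $e_3=\tfrac{d_2+\sqrt{d_2}}{2}=\tau_2$ directly, while $e_2=\tfrac{-d_1+\sqrt{d_1}}{2}=\tau_1-d_1$ differs from $\tau_1$ by the integer translation $\begin{pmatrix}1&-d_1\\0&1\end{pmatrix}\in\Gamma_0(N)$; since the big CM cycle is read off in $Y_0(N)\times Y_0(N)$, i.e.\ modulo $\Gamma_0(N)\times\Gamma_0(N)$, we may replace $e_2$ by $\tau_1$ (and adjust the conjugate point correspondingly), which yields the asserted identification.

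The step I expect to be the main obstacle is the orientation bookkeeping at the end: one has to check that it is genuinely the component $\HH^2$ (and not $(\HH^-)^2$) that $pr\circ\tilde w$ pairs with the complex structure on $W_{\sigma_2}$ attached to $z_{\sigma_2}$, and to keep track of the harmless integer translations relating each $e_{i+1}$ to $\tau_i$, so that the two big CM points come out on the correct components. An alternative, purely computational route that avoids the torus is to write down an isotropic vector $Z\in\mathcal{L}$ spanning the $\C$-line attached to $W_{\sigma_2}$ for each of its two orientations, transport it through the identification ${\rm Res}_{F/\Q}W\cong M_2(\Q)$ given by $\sum x_ie_i\mapsto\begin{pmatrix}x_3&x_1\\x_4&x_2\end{pmatrix}$, and match $Z$ up to scalar with $\tilde w(z_1,z_2)=\begin{pmatrix}z_1&-z_1z_2\\1&-z_2\end{pmatrix}$; this reduces everything to a finite linear algebra calculation over $F$ but is messier to organize than the fixed-point argument above.
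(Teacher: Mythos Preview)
The paper does not give its own proof; it simply refers to \cite[Lemma~3.4]{yangyin}. Your torus--fixed--point argument is a standard and correct way to establish the statement, and is essentially what one recovers upon unwinding that citation: since $T(\RR)$ maps to $\mathrm{Res}_{F/\Q}\mathrm{SO}(W)(\RR)\cong\mathrm{SO}(W_{\sigma_1})\times\mathrm{SO}(W_{\sigma_2})$, it preserves the oriented negative $2$-plane $W_{\sigma_2}$ and no other, so its fixed locus in $\D$ is exactly $\{z_{\sigma_2}^{\pm}\}$; reading off the fixed points of each $\imath_i(E_i^{\times})$ on $\HH\cup\HH^{-}$ from the eigenvector relation $\imath_i(t)\begin{pmatrix}e_{i+1}\\e_1\end{pmatrix}=t\begin{pmatrix}e_{i+1}\\e_1\end{pmatrix}$ is then exactly the right move.

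One small point worth flagging: your computation naturally lands on $(e_2,e_3)\in\HH^2$, and you observe that $e_2=\frac{-d_1+\sqrt{d_1}}{2}=\tau_1-d_1$ differs from $\tau_1$ by an integer. Strictly speaking the lemma asserts an identification in $\HH^2\cup(\HH^-)^2$, not in $Y_0(N)\times Y_0(N)$, so what you have literally proved is the statement modulo $\Gamma_0(N)\times\Gamma_0(N)$. This is entirely harmless for every use made of the lemma in the paper---indeed the paper itself later invokes $-\bar\tau_i=\tau_i-d_i$ together with the periodicity $\pi_N(\tau+1)=\pi_N(\tau)$ to collapse the four terms in the proof of Theorem~\ref{gztype}---but it does mean that your orientation bookkeeping concern is genuinely where the looseness lives, and your handling of it (passing to the quotient) is the right resolution.
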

\begin{proof}
See \cite[Lemma 3.4]{yangyin}.
\end{proof}

\begin{Lemma}
\label{ringclass}
Let ${\rm Cl}_{N}(E_{i})$ be the ring class group of conductor $N$ of $E_{i}$ for $i=1,2$. Then there is an injection
$$
\jmath:T(\mathbb{Q})\backslash T(\mathbb{A}_{f})/K_{N,T}\hookrightarrow {\rm Cl}_{N}(E_{1})\times {\rm Cl}_{N}(E_{2})
$$
with image
\begin{align*}
S(N,d_{1},d_{2}):=&\left\{([\mathfrak{a}_{1}],[\mathfrak{a}_{2}])\in {\rm Cl}_{p}(E_{1})\times {\rm Cl}_{p}(E_{2})|\,\text{$[\mathfrak{a}_{i}]$ are representatives of ${\rm Cl}_{p}(E_{i})$ with ${\rm N}(\mathfrak{a}_{1})={\rm N}(\mathfrak{a}_{2})$}\right\}\\
\cong&\left\{\left.(Q_{1},Q_{2})\in\mathcal{Q}_{d_{1}}(N)/\Gamma_{0}(N)\times\mathcal{Q}_{d_{2}}(N)/\Gamma_{0}(N)\right|\,\mbox{$a_{1}=a_{2}$, i.e., $Q_{1}(1,0)=Q_{2}(1,0)$}\right\}
\end{align*}
where $\mathcal{Q}_{d}(N)$ denotes the set of primitive and positive definite binary quadratic forms $aX^{2}+bXY+cY^{2}$ of discriminant $d$ with $(a,N)=1$. The isomorphism is given by
$$
[aX^{2}+bXY+cY^{2}]\to \left[a,N\frac{-b+\sqrt{d}}{2}\right].
$$
\end{Lemma}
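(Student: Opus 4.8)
The plan is to deduce everything from the adelic description of ring class groups together with a componentwise identification of $K_{N,T}$. Write $\mathcal{O}_{E_i,N}=\mathbb{Z}+N\mathcal{O}_{E_i}$ for the order of conductor $N$ in $E_i$, so that ${\rm Cl}(E_{i,N})={\rm Pic}(\mathcal{O}_{E_i,N})$ has the standard idelic description
$$
{\rm Cl}(E_{i,N})\cong E_i^{\times}\backslash\mathbb{A}_{E_i,f}^{\times}/\widehat{\mathcal{O}}_{E_i,N}^{\times},\qquad [t]\mapsto[\mathfrak{a}(t)],
$$
where $\mathfrak{a}(t)$ is the proper fractional $\mathcal{O}_{E_i,N}$-ideal attached to the finite idele $t$. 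The first step is the local computation, following the analogous argument in \cite{yangyin} and the description of the torus in \cite{hy,bky}, that under the CM embeddings $\imath_i\colon E_i\hookrightarrow M_2(\mathbb{Q})$ fixed above one has $\imath_i^{-1}(K_0(N))=\widehat{\mathcal{O}}_{E_i,N}^{\times}$; this is exactly why the conductor-$N$ ring class groups (rather than the full class groups) appear, and it is where \eqref{LNZ} and the shape of $L_N$ enter. Granting this, the two projections $T(\mathbb{A}_f)\to\mathbb{A}_{E_i,f}^{\times}$, $(t_1,t_2)\mapsto t_i$, descend to a homomorphism of abelian groups
$$
\jmath\colon T(\mathbb{Q})\backslash T(\mathbb{A}_f)/K_{N,T}\to {\rm Cl}(E_{1,N})\times {\rm Cl}(E_{2,N}),\qquad [(t_1,t_2)]\mapsto([\mathfrak{a}(t_1)],[\mathfrak{a}(t_2)]).
$$

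Since $\jmath$ is a homomorphism of abelian groups, injectivity amounts to triviality of its kernel. So suppose $t_i=\gamma_i u_i$ with $\gamma_i\in E_i^{\times}$ and $u_i\in\widehat{\mathcal{O}}_{E_i,N}^{\times}$ for $i=1,2$; I must show $(t_1,t_2)\in T(\mathbb{Q})K_{N,T}$. Applying ${\rm N}_{E_i/\mathbb{Q}}$ to the defining relation $t_1\bar t_1=t_2\bar t_2$ gives ${\rm N}(\gamma_1){\rm N}(u_1)={\rm N}(\gamma_2){\rm N}(u_2)$ in $\mathbb{A}_{\mathbb{Q},f}^{\times}$, hence ${\rm N}(\gamma_1)/{\rm N}(\gamma_2)\in\mathbb{Q}^{\times}\cap\widehat{\mathbb{Z}}^{\times}=\{\pm1\}$; positivity of norms from imaginary quadratic fields forces ${\rm N}(\gamma_1)={\rm N}(\gamma_2)$, so $(\gamma_1,\gamma_2)\in T(\mathbb{Q})$. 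Dividing the two relations gives $u_1\bar u_1=u_2\bar u_2$, so $(u_1,u_2)\in T(\mathbb{A}_f)$ and, being componentwise in $\widehat{\mathcal{O}}_{E_i,N}^{\times}=\imath_i^{-1}(K_0(N))$, lies in $K_{N,T}$. Thus $(t_1,t_2)=(\gamma_1,\gamma_2)(u_1,u_2)\in T(\mathbb{Q})K_{N,T}$ and $\jmath$ is injective.

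For the image, the inclusion into $S(N,d_1,d_2)$ is immediate: for $(t_1,t_2)\in T(\mathbb{A}_f)$ the relation ${\rm N}(t_1)={\rm N}(t_2)$ shows that ${\rm N}(\mathfrak{a}(t_1))$ and ${\rm N}(\mathfrak{a}(t_2))$ generate the same ideal of $\mathbb{Z}$, so after scaling the $\mathfrak{a}(t_i)$ by suitable principal ideals we obtain representatives with ${\rm N}(\mathfrak{a}_1)={\rm N}(\mathfrak{a}_2)$. Conversely, given $([\mathfrak{a}_1],[\mathfrak{a}_2])\in S(N,d_1,d_2)$ with chosen representatives of equal norm, I would run the construction backwards: choose finite ideles $t_i$ with $\mathfrak{a}(t_i)=\mathfrak{a}_i$, then use the equal-norm hypothesis, localizing prime by prime and exploiting that ${\rm N}\colon\widehat{\mathcal{O}}_{E_i,N}^{\times}\to\widehat{\mathbb{Z}}^{\times}$ is surjective away from the ramified primes, to correct $t_1$ or $t_2$ by a unit so that ${\rm N}(t_1)={\rm N}(t_2)$ exactly, producing a point of $T(\mathbb{A}_f)$ mapping to $([\mathfrak{a}_1],[\mathfrak{a}_2])$. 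Finally, the displayed bijection with $\Gamma_0(N)$-classes of primitive positive definite forms $aX^2+bXY+cY^2$ of discriminant $d_i$ with $(a,N)=1$ is the classical dictionary between such forms and proper ideals of the order of conductor $N$ (see \cite{shi} and the discussion in \cite{yangyin}): one checks directly that, when $(a,N)=1$, the lattice $\mathbb{Z}a+\mathbb{Z}N\frac{-b+\sqrt{d}}{2}$ is a proper (hence invertible) fractional $\mathcal{O}_{E_i,N}$-ideal of norm $a$, that its class depends only on the $\Gamma_0(N)$-orbit of the form, and that ${\rm N}(\mathfrak{a}_i)=a_i$, so the norm condition ${\rm N}(\mathfrak{a}_1)={\rm N}(\mathfrak{a}_2)$ becomes precisely $a_1=a_2$, i.e.\ $Q_1(1,0)=Q_2(1,0)$.

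The step I expect to be the main obstacle is the precise identification of the image of $\jmath$: passing from "the ideals $\mathfrak{a}(t_i)$ have norms generating the same $\mathbb{Z}$-ideal" to "there exist representatives with literally equal norm" requires matching the idelic norms on the nose, and at the ramified primes (those dividing $d_1$ or $d_2$) the local norm map is not surjective onto the units — which is exactly why $S(N,d_1,d_2)$ must be defined through the existence of equal-norm representatives rather than by a more naive closed condition. Everything else is bookkeeping once the componentwise identification $\imath_i^{-1}(K_0(N))=\widehat{\mathcal{O}}_{E_i,N}^{\times}$ is established.
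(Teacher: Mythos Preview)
Your proposal is correct and follows essentially the same approach as the paper: the paper's proof also hinges on the explicit computation showing $\imath_i^{-1}(K_0(N))=\widehat{\mathcal{O}}_{E_i,N}^{\times}$ (via the matrix formula for $\imath_i$), then invokes the adelic description of ring class groups. The only real difference is that where you spell out the kernel argument for injectivity and sketch the image computation, the paper simply cites \cite[Lemma~3.5]{yangyin} for the first assertion and \cite[Thm.~4.4]{chenyui} for the quadratic-form isomorphism; your honest flag about the ramified-prime subtlety in matching norms exactly is precisely the content hidden in that citation.
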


\begin{proof}
It is not hard to check that
$$
\imath_{i}\left(x+y\frac{d_{i}+\sqrt{d_{i}}}{2}\right)=\begin{pmatrix}x+yd_{i}&y\frac{d_{i}-d_{i}^{2}}{4}\\y&x\end{pmatrix}
$$
for $x,y\in\mathbb{Q}$. Thus, one has $\imath_{i}^{-1}(K_{0}(N))=\hat{\mathcal{O}}_{i,N}^{\times}$ where $\mathcal{O}_{i,N}$ is the order of conductor $N$ of $E_{i}$ and $E_{i}^{\times}\backslash\mathbb{A}_{E_{i},f}^{\times}/\imath_{i}^{-1}(K_{0}(N))\cong {\rm Cl}_{N}(E_{i})$ by \cite[Section 4.4]{bro}. This fact together with \cite[Lemma 3.5]{yangyin} implies the first assertion of the lemma. The isomorphism is due to Chen and Yui \cite[Thm.~4.4]{chenyui}.
\end{proof}

\begin{Proposition}
\label{galois}
Let $G_{i}$ be the associated Galois group of the ring class field of conductor $N$ of $E_{i}$. Then the points 
$$[z_{\sigma_{2}}^{\pm},(t_{1}^{-1},t_{2}^{-1})]\in  T(\mathbb{Q})\backslash \left(\{z_{\sigma_{2}}^{\pm}\}\times T(\mathbb{A}_{\Q,f})/K_{N,T}\right)$$ 
are identified with 
$$
[\tau_{1}^{\sigma_{\mathfrak{a}_{1}}},\tau_{2}^{\sigma_{\mathfrak{a}_{2}}}],\,[(-\bar{\tau}_{1})^{\sigma_{\mathfrak{a}_{1}}},(-\bar{\tau}_{2})^{\sigma_{\mathfrak{a}_{2}}}]\in Y_{0}(N)\times Y_{0}(N),
$$
respectively, where $\sigma_{\mathfrak{a}_{i}}\in G_{i}$ is the Galois element associated to $[\mathfrak{a}_{i}]\in {\rm Cl}_{N}(E_{i})$ via the Artin map, and $[\mathfrak{a}_{i}]\in {\rm Cl}_{N}(E_{i})$ is the ideal class associated to the idele class $[t_{i}]\in T(\mathbb{Q})\backslash T(\mathbb{A}_{\Q,f})/K_{N,T}$.

In particular, by the Shimura reciprocity law, one has
$
\tau_{i}^{\sigma_{\mathfrak{a}_{i}}}=\tau_{\mathfrak{a}_{i}}/N
$
 where $\tau_{\mathfrak{a}}$ is the CM point associated to the integral ideal $\mathfrak{a}$ of conductor $N$.
\end{Proposition}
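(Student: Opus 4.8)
The plan is to deduce the proposition from the main theorem of complex multiplication (Shimura's reciprocity law), using that the action of the torus $T$ on the Hilbert modular surface decouples into its actions on the two copies of $Y_{0}(N)$. The bijection between $T(\mathbb{Q})\backslash(\{z_{\sigma_{2}}^{\pm}\}\times T(\mathbb{A}_{\Q,f})/K_{N,T})$ and a subset of ${\rm Cl}(E_{1,N})\times{\rm Cl}(E_{2,N})$ is already furnished by Lemmas~\ref{ztau} and \ref{ringclass}; what remains is to show that the right-translation action of $T(\mathbb{A}_{\Q,f})$ on the base point matches the Galois action through the Artin map, and that the resulting $\Gamma_{0}(N)$-CM point is exactly the one labelled $\tau_{\varphi(\mathfrak{a}_{i})}$.

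First I would record, via Lemma~\ref{ztau} and Proposition~\ref{yyprop}, that under $\mathbb{D}\cong\HH^{2}\cup(\HH^{-})^{2}$ the base point $[z_{\sigma_{2}},1]$ (resp.\ $[z_{\sigma_{2}}^{-},1]$) corresponds to $(\tau_{1},\tau_{2})$ (resp.\ $(-\bar{\tau}_{1},-\bar{\tau}_{2})$) with $\tau_{i}=\frac{d_{i}+\sqrt{d_{i}}}{2}$. Because $H=\{(g_{1},g_{2})\in\GL_{2}\times\GL_{2}\mid\det g_{1}=\det g_{2}\}$ acts componentwise on $\HH^{2}$, under the identification $X_{K_{N}}\cong Y_{0}(N)\times Y_{0}(N)$ of Subsection~\ref{hil} a finite idele $t=(t_{1},t_{2})\in T(\mathbb{A}_{\Q,f})$ acts on the $i$-th factor $Y_{0}(N)$ through $\imath_{i}(t_{i})\in\GL_{2}(\mathbb{A}_{f})$. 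Hence the right translation of $[z_{\sigma_{2}}^{\pm},1]$ by $(t_{1}^{-1},t_{2}^{-1})$ is computed one factor at a time, as the translation of the CM point $\tau_{i}$ on the modular curve $Y_{0}(N)$ by $\imath_{i}(t_{i}^{-1})$, and the whole statement reduces to the corresponding statement on $Y_{0}(N)$.

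On each factor, $\tau_{i}$ represents $\C/(\Z+\Z\tau_{i})=\C/\mathcal{O}_{E_{i}}$ together with the $\Gamma_{0}(N)$-level structure pinned down by the sublattice relation \eqref{LNZ}, so it is the CM point of $Y_{0}(N)$ attached to the trivial class in ${\rm Cl}(E_{i,N})$ under the isomorphism $E_{i}^{\times}\backslash\mathbb{A}_{E_{i},f}^{\times}/\imath_{i}^{-1}(K_{0}(N))\cong{\rm Cl}(E_{i,N})$ used in the proof of Lemma~\ref{ringclass} (where $\imath_{i}^{-1}(K_{0}(N))=\hat{\mathcal{O}}_{i,N}^{\times}$). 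The main theorem of complex multiplication for $Y_{0}(N)$ (see \cite{shi}; for the adelic formulation and the case $N=1$, see \cite{yangyin}) then identifies translation by $\imath_{i}(t_{i}^{-1})$ with the Galois automorphism $\sigma_{\mathfrak{a}_{i}}\in G_{i}$ attached via the Artin map to the class $[\mathfrak{a}_{i}]\in{\rm Cl}(E_{i,N})$ of $t_{i}$, which gives the first assertion. For the ``in particular'' part, I would feed the explicit dictionary of Lemma~\ref{ringclass} between ${\rm Cl}(E_{i,N})$ and $\Gamma_{0}(N)$-classes of binary quadratic forms into the reciprocity law: the image of the trivial CM point under $\sigma_{\mathfrak{a}_{i}}$ is the CM point carrying the class of $\mathfrak{a}_{i}$ (up to the normalization of the Artin map), which after unwinding the dictionary is $\tau_{\varphi(\mathfrak{a}_{i})}$ with $\varphi([a,N\tfrac{b+\sqrt{d}}{2}])=[a,\tfrac{b+\sqrt{d}}{2}]$.

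I expect the main obstacle to be purely normalization bookkeeping: ensuring one obtains $\sigma_{\mathfrak{a}_{i}}$ rather than $\sigma_{\mathfrak{a}_{i}}^{-1}$ (that is, the direction of the Artin reciprocity map, and whether the ideles enter as $t_{i}$ or $t_{i}^{-1}$) when passing between the ${\rm GSpin}(2,2)$-adelic description of Section~\ref{borlif} and the classical $\GL_{2}$ formulation of complex multiplication, and verifying that the level structure produced by $L_{N}\subset\mathcal{O}_{E}$ is exactly the one stabilized by $K_{0}(N)$ under $\imath_{i}$, so that the order of conductor $N$ --- and not the maximal order --- governs the action. Both points are precisely what the computation $\imath_{i}^{-1}(K_{0}(N))=\hat{\mathcal{O}}_{i,N}^{\times}$ in Lemma~\ref{ringclass} is designed to control, so the argument should go through once these are tracked with care.
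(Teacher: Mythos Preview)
Your proposal is correct and follows the same approach as the paper: the paper's proof simply cites Lemma~\ref{ringclass} together with \cite[Proposition~3.6]{yangyin} for the first assertion and \cite[Chapter~6]{shi} plus \cite[Theorem~III]{chenyui} for the second, and what you have written is essentially an unpacking of those references --- reducing to each factor of $Y_{0}(N)\times Y_{0}(N)$ via the componentwise action, invoking the main theorem of complex multiplication on each factor, and then using the ring-class dictionary.
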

\begin{proof}
The former results follow from Lemma \ref{ringclass} and \cite[Proposition 3.6]{yangyin}. The latter result follows from \cite[Chapter 6]{shi} (or see \cite{PS} for a nice summary of Shimura reciprocity law) and \cite[Theorem III]{chenyui}.
\end{proof}

Now we are ready for

\begin{proof}[{Proof of Theorem \ref{gztype}}]
By Theorem~\ref{bors}(2) and Theorem~\ref{borthm}, we deduce that
\begin{align}
\Phi(z,F_{N})&=-4\log|\Psi(z,F_{N})|\nonumber\\
\label{step1}
&=-4\log|\pi_{N}(z_{1})-\pi_{N}(z_{2})|
\end{align}
since $c(0,0)=0$ by Lemma~\ref{lemFN}. Then together with Proposition~\ref{galois} and Theorem~\ref{bcm},          
 equation~\eqref{step1} implies that
\begin{align}
&4\sum_{([\mathfrak{a}_{1}],[\mathfrak{a}_{2}])\in S(p,d_{1},d_{2})}\Bigg(\log|\pi_{N}(\tau_{1}^{\sigma_{\mathfrak{a}_{1}}})-\pi_{N}(\tau_{2}^{\sigma_{\mathfrak{a}_{2}}})|+\log|\pi_{N}((-\bar{\tau}_{1})^{\sigma_{\mathfrak{a}_{1}}})-\pi_{N}(\tau_{2}^{\sigma_{\mathfrak{a}_{2}}})|\nonumber\\
&\qquad\qquad\qquad+\log|\pi_{N}(\tau_{1}^{\sigma_{\mathfrak{a}_{1}}})-\pi_{N}((-\bar{\tau}_{2})^{\sigma_{\mathfrak{a}_{2}}})|+\log|\pi_{N}((-\bar{\tau}_{1})^{\sigma_{\mathfrak{a}_{1}}})-\pi_{N}((-\bar{\tau}_{2})^{\sigma_{\mathfrak{a}_{2}}})|\Bigg)\nonumber\\
&=-\frac{2|S(N,d_{1},d_{2})|}{\Lambda(0,\chi)}\left(a_{1}(\phi_{0,0})+\sum_{\substack{\mu\in L'/L}}c(0,\mu)a_{0}(\phi_{\mu})\right)\nonumber\\
\label{step2}
&=-\frac{|S(N,d_{1},d_{2})|w_{1}w_{2}}{2h(d_{1})h(d_{2})}\left(a_{1}(\phi_{0,0})+\sum_{\substack{\mu\in L'/L}}c(0,\mu)a_{0}(\phi_{\mu})\right),
\end{align}
where the simplification in the second equality follows from Lemma~\ref{lemFN} which tells that $c(-m,{\mu})=0$ except $c(-1,\mu_{0,0})=1$ for $m>0$, and the last equality follows from the fact that $\Lambda(0,\chi)=\Lambda(0,\chi_{E_{1}/\Q})\Lambda(0,\chi_{E_{2}/\Q})$. Since $\pi_{N}(\tau+1)=\pi_{N}(\tau)$ and $-\bar{\tau}_{i}=-d_{i}+\tau_{i}$, then the left hand side of~\eqref{step2} is simply
$$
16\sum_{([\mathfrak{a}_{1}],[\mathfrak{a}_{2}])\in S(p,d_{1},d_{2})}\log|\pi_{N}(\tau_{1}^{\sigma_{\mathfrak{a}_{1}}})-\pi_{N}(\tau_{2}^{\sigma_{\mathfrak{a}_{2}}})|,
$$
and thus we have
\begin{align}
&\sum_{([\mathfrak{a}_{1}],[\mathfrak{a}_{2}])\in S(p,d_{1},d_{2})}\log|\pi_{N}(\tau_{1}^{\sigma_{\mathfrak{a}_{1}}})-\pi_{N}(\tau_{2}^{\sigma_{\mathfrak{a}_{2}}})|\nonumber\\
&=-\frac{|S(N,d_{1},d_{2})|w_{1}w_{2}}{32h(d_{1})h(d_{2})}\left(a_{1}(\phi_{0,0})+\sum_{\substack{\mu\in L'/L}}c(0,\mu)a_{0}(\phi_{\mu})\right)\nonumber\\
&=-\frac{|S(N,d_{1},d_{2})|w_{1}w_{2}}{32h(d_{1})h(d_{2})}\left(\sum_{\substack{t\in F_{+}^{\times}\\ {\rm tr}_{F/\mathbb{Q}}t=m}}a(t,\phi_{0,0})+\sum_{\substack{\mu\in L'/L}}c(0,\mu)a_{0}(\phi_{\mu})\right)\nonumber\\
\label{step3}
&=-\frac{|S(N,d_{1},d_{2})|w_{1}w_{2}}{32h(d_{1})h(d_{2})}\left(\sum_{\substack{t=\frac{2m+D+\sqrt{D}}{2}\\ |2m+D|<\sqrt{D}\\m\in\Z}}a\left(\frac{t}{\sqrt{D}},\phi_{0,0}\right)+\sum_{\substack{\mu\in L'/L}}c(0,\mu)a_{0}(\phi_{\mu})\right),
\end{align}
where \eqref{step3} follows from the rescaling $t\to\frac{t}{\sqrt{D}}$ and the fact that $a\left(\frac{t}{\sqrt{D}},\phi_{0,0}\right)=0$ unless $\frac{t}{\sqrt{D}}~\in~\partial_{F}^{-1}~=~\frac{1}{\sqrt{D}}\mathcal{O}_{F}$.
Moreover, by Lemma~\ref{lemFN}, we can easily show that for $p\in\{3,5,7,13\}$, the constant terms $c(0,\mu_{0,k})=\frac{24}{p-1}$ for $1\leq k\leq p-1$ and $c(0,\mu)=0$ for $\mu\ne\mu_{0,k}$, and these simplify the right hand side of \eqref{step3} and yield \eqref{gzpi} in Theorem~\ref{gztype}. Finally, the quadratic form interpretation of the left hand side of~\eqref{gzpi} follows from the isomorphism given in Lemma~\ref{ringclass}.
\end{proof}


\section{Computations of $a_{1}\left(\frac{t}{\sqrt{D}},\phi_{0,0}\right)$ and $a_{0}(\phi_{0,k})$}
In this section, we assume that $p\in\{3,5,7,13\}$, and  we aim to compute $a_{1}\left(\frac{t}{\sqrt{D}},\phi_{0,0}\right)$ and $a_{0}(\phi_{0,k})$ explicitly. To compute $a_{1}\left(\frac{t}{\sqrt{D}},\phi_{0,0}\right)$, one needs to calculate the local Whittaker functions $W_{\frac{t}{\sqrt{D}},\mathfrak{p}}^{\psi_{F}}(s,\phi)$. We note that if one lets $\psi_{F}'(t)=\psi_{F}(\frac{t}{\sqrt{D}})$ and $W'=W$ with $F$-quadratic form $Q_{F}'(z)=z\bar{z}$, then the Weil representations associated to $(W,Q_{F},\psi_{F})$ and $(W',Q_{F}',\psi_{F}')$ are the same, and one has by \cite[Lemma 4.2.2]{hy}, 
\begin{equation}
\label{wwphi}
W_{\frac{t}{\sqrt{D}},\mathfrak{p}}^{\psi_{F}}(s,\phi)=|{D}|_{\mathfrak{p}}^{\frac{1}{2}}W_{t,\mathfrak{p}}^{\psi_{F}'}(s,\phi).
\end{equation}
Thus
\begin{equation}
\label{wdw}
\frac{W_{\frac{t}{\sqrt{D}},\mathfrak{p}}^{*}(s,\phi)}{\gamma(W_{\mathfrak{p}})}=|D|_{\mathfrak{p}}^{-\frac{s}{2}}L_{\mathfrak{p}}(s+1,\chi)\frac{W^{\psi_{F}'}_{t,\mathfrak{p}}(s,\phi)}{\gamma(W_{\mathfrak{p}}')},
\end{equation}
and we will compute $a_{1}\left(\frac{t}{\sqrt{D}},\phi_{0,0}\right)$ via $W^{\psi_{F}'}_{t,\mathfrak{p}}(s,\phi)$ whose calculations are tidier due to the normalization (see \cite[Sec. 6]{yyy}). In addition, for $a_{0}(\phi_{0,k})$, by the definition \eqref{w00}, one has 
$$
a_{0}(\phi_{0,k})=-\Lambda(0,\chi)\tilde{W}'_{0,f}(0,\phi_{0,k})=-h(E_{1})h(E_{2})\tilde{W}'_{0,f}(0,\phi_{0,k})
$$
with
$$
\tilde{W}_{0,f}(s,\phi_{\mu})=\prod_{\mathfrak{p}\nmid\infty}\frac{|D|_{\mathfrak{p}}^{-\frac{1}{2}}L_{\mathfrak{p}}(s+1,\chi)}{L_{\mathfrak{p}}(s,\chi)}\frac{W^{\psi_{F}}_{0,\mathfrak{p}}(s,\phi_{\mu})}{\gamma(W_{\mathfrak{p}})}
$$
provided $\phi_{\mu}$ is factorizable.

We will compute $a_{1}\left(\frac{t}{\sqrt{D}},\phi_{0,0}\right)$ and $a_{0}(\phi_{0,k})$ case by case according to the ramification of $p$ in $F$ and $E$. Throughout the remainder of this section, for $0\leq k\leq p-1$, we write $\phi_{0,k}$ for ${\rm Char}(\mu_{0,k}+L_{p}\otimes \hat{\mathbb{Z}})$. Clearly, by \eqref{LNZ}, one has
$$
 \phi_{0,k}=\phi_{0,k,p}\prod_{\mathfrak{p}\nmid p}\phi_{0,0,\mathfrak{p}}
$$
where $\phi_{0,k,p}={\rm Char}(\mu_{0,k}+\mathcal{L}_{p})$, $\mathcal{L}_{p}=L_{p}\otimes\mathbb{Z}_{p}$ and $\phi_{0,0,\mathfrak{p}}={\rm Char}(\mathcal{O}_{E_{\mathfrak{p}}})$ for $\mathfrak{p}\nmid{p}$. One key step to computing these coefficients via local Whittaker functions is the factorizability of $\phi_{\mu}$, so our main strategy is to write $\phi_{0,k,p}$ as a sum of products factorizable over $\mathfrak{p}|p$.
\begin{enumerate}
\item[Case 1.]{When $\left(\frac{d_{1}}{p}\right)=\left(\frac{d_{2}}{p}\right)=1$, then $p$ is completely split in $F$ and in $E$, that is, $p\mathcal{O}_{F}=\mathfrak{p}_{1}\mathfrak{p}_{2}$ and $p\mathcal{O}_{E}=\mathfrak{B}_{1}\bar{\mathfrak{B}}_{1}
\mathfrak{B}_{2}\bar{\mathfrak{B}}_{2}$. Similar to \cite[Section 5]{yangyin}, it is not hard to check that
$$
\mathcal{L}_{p}=\coprod_{i=0}^{p-1}(M_{i}\times M_{i})
$$
where $M_{i}=\left\{\left.(x_{1},x_{2})\in\Z_{p}^{2}\right|\,x_{1}+x_{2}\equiv i\pmod{p}\right\}$, and thus for $0\leq k\leq p-1$,
$$
 \phi_{0,k}=\sum_{i=0}^{p-1}\phi_{0,k}^{(i)}
$$
where 
$$
 {\phi_{0,k}^{(i)}=\phi^{(i)}_{\mathfrak{p}_{1}}\phi_{\mathfrak{p}_{2}}^{(i+k)}\prod_{\mathfrak{p}\nmid p}\phi_{0,0,\mathfrak{p}}}
$$
and $\phi^{(i)}={\rm Char}(M_{i})$. In this case, we aim to compute
$$
\sum_{\substack{t=\frac{2m+D+\sqrt{D}}{2}\\ |2m+D|<\sqrt{D}\\m\in\Z}}a\left(\frac{t}{\sqrt{D}},\phi_{0,0}\right)=a_{1}(\phi_{0,0})=\sum_{i=0}^{p-1}a_{1}(\phi_{0,0}^{(i)})
$$
where
$$
a_{1}(\phi_{0,0}^{(i)})=\sum_{\substack{t=\frac{2m+D+\sqrt{D}}{2}\\ |2m+D|<\sqrt{D}\\m\in\Z}}a\left(\frac{t}{\sqrt{D}},\phi_{0,0}^{(i)}\right),
$$
and for $1\leq k\leq p-1$,
$$
a_{0}(\phi_{0,k})=\sum_{i=0}^{p-1}a_{0}(\phi_{0,k}^{(i)})
$$
where
$$
a_{0}(\phi_{0,k}^{(i)})=-\tilde{W}_{0,f}'(0,\phi_{0,k}^{(i)}).
$$
Now we briefly explain how to compute $a\left(\frac{t}{\sqrt{D}},\phi_{0,0}^{(i)}\right)$. First denote by $\mbox{Diff}(W,t/\sqrt{D})$ the set of prime ideals $\mathfrak{p}$ of $F$ such that $W_{\mathfrak{p}}$ does not represent $t/\sqrt{D}$, i.e., $\mathfrak{p}\in\mbox{Diff}(W,t/\sqrt{D})$ if and only if $t\ne z\bar{z}$ for any $z\in E_{\mathfrak{p}}^{\times}$ if and only if $\mathfrak{p}$ is inert in $E/F$ and ${\rm ord}_{\mathfrak{p}}(t)$ is odd. By \cite[Prop. 2.7(1)]{yangyin} and \eqref{wtphi}, we can see that $a\left(\frac{t}{\sqrt{D}},\phi_{0,0}^{(i)}\right)=0$ when $|\mbox{Diff}(W,t/\sqrt{D})|>1$ since $W^{*}_{\frac{t}{\sqrt{D}},\mathfrak{p}}(0,\phi_{0,0}^{(i)})=0$ for $\mathfrak{p}\in\mbox{Diff}(W,t/\sqrt{D})$. Then when $\mbox{Diff}(W,t/\sqrt{D})=\{\mathfrak{p}\}$, by \cite[Prop. 2.7(2)]{yangyin} or direct calculations from \eqref{wtphi}, one has
\begin{equation}
\label{at0}
a\left(\frac{t}{\sqrt{D}},\phi_{0,0}^{(i)}\right)=-4W_{\frac{t}{\sqrt{D}},\mathfrak{p}}^{*,'}(0,\phi_{0,0}^{(i)})\prod_{\mathfrak{q}\nmid\mathfrak{p}\infty}W_{\frac{t}{\sqrt{D}},\mathfrak{q}}^{*}(0,\phi_{0,0}^{(i)}).
\end{equation}
Clearly, such a prime ideal $\mathfrak{p}$ must be neither $\mathfrak{p}_{1}$ nor $\mathfrak{p}_{2}$. In addition, by \cite[Prop. 2.7(3), (4)]{yangyin}, we have
$$
\frac{W_{\frac{t}{\sqrt{D}},\mathfrak{p}}^{*,'}(0,\phi_{0,0}^{(i)})}{\gamma(W_{\mathfrak{p}})}=\frac{1+{\rm ord}_{\mathfrak{p}}(t)}{2}\log N(\mathfrak{p})
$$
and
$$
\frac{W_{\frac{t}{\sqrt{D}},\mathfrak{q}}^{*}(0,\phi_{0,0}^{(i)})}{\gamma(W_{\mathfrak{q}})}=\rho_{\mathfrak{q}}(t\mathfrak{p}^{-1})
$$
for $\mathfrak{q}\ne\mathfrak{p}$, where $\rho_{\mathfrak{q}}(\mathfrak{a})$ is defined as in \eqref{localrho}.
 Then together with the facts that $\prod_{\mathfrak{q}\ne\mathfrak{p}}\rho_{\mathfrak{p}}(t\mathfrak{p}^{-1})=1$, we can rewrite \eqref{at0} as
\begin{align*}
a\left(\frac{t}{\sqrt{D}},\phi_{0,0}^{(i)}\right)&=-4\frac{1+{\rm ord}_{\mathfrak{p}}(t)}{2}\log N(\mathfrak{p})\prod_{\mathfrak{q}\nmid\mathfrak{p} p}\frac{W_{\frac{t}{\sqrt{D}},\mathfrak{q}}^{*}(0,\phi_{0,0}^{(i)})}{\gamma(W_{\mathfrak{q}})}\times\prod_{v}\gamma(W_{v})\times\prod_{j=1}^{2}\frac{W_{\frac{t}{\sqrt{D}},\mathfrak{p}_{j}}^{*}(0,\phi^{(i)}_{\mathfrak{p}_{j}})}{\gamma(W_{\mathfrak{p}_{j}})}\\
&=-4\frac{1+{\rm ord}_{\mathfrak{p}}(t)}{2}\log N(\mathfrak{p})\prod_{\mathfrak{q}\nmid p}\rho_{\mathfrak{q}}(t\mathfrak{p}^{-1})\prod_{j=1}^{2}\frac{W_{\frac{t}{\sqrt{D}},\mathfrak{p}_{j}}^{*}(0,\phi^{(i)}_{\mathfrak{p}_{j}})}{\gamma(W_{\mathfrak{p}_{j}})}\\
&=-4\frac{1+{\rm ord}_{\mathfrak{p}}(t)}{2}\log(N(\mathfrak{p}))\prod_{\mathfrak{q}\nmid p}\rho_{\mathfrak{q}}(t\mathfrak{p}^{-1})\prod_{j=1}^{2}L(1,\chi_{\mathfrak{p}_{j}})\frac{W_{t,\mathfrak{p}_{j}}^{\psi_{F}'}(0,\phi_{\mathfrak{p}_{j}}^{(i)})}{\gamma(W_{\mathfrak{p}_{j}}')}\\
&=-4{\frac{p^{2}}{(p-1)^{2}}}\frac{1+{\rm ord}_{\mathfrak{p}}(t)}{2}\log(N(\mathfrak{p}))\prod_{\mathfrak{q}\nmid p}\rho_{\mathfrak{q}}(t\mathfrak{p}^{-1})\prod_{j=1}^{2}\frac{W_{t,\mathfrak{p}_{j}}^{\psi_{F}'}(0,\phi_{\mathfrak{p}_{j}}^{(i)})}{\gamma(W_{\mathfrak{p}_{j}}')}
\end{align*}
where the third equality follows from \eqref{wdw}. Finally,  since this happens for exactly one prime ideal $\mathfrak{p}$ such that $\mbox{Diff}(W,t/\sqrt{D})=\{\mathfrak{p}\}$, we can write it in a unified form as
\begin{align*}
a\left(\frac{t}{\sqrt{D}},\phi_{0,0}^{(i)}\right)
&=-4{\frac{p^{2}}{(p-1)^{2}}}\sum_{\text{$\mathfrak{p}$ inert in $E/F$}}\frac{1+{\rm ord}_{\mathfrak{p}}(t)}{2}\log(N(\mathfrak{p}))\prod_{\mathfrak{q}\nmid p}\rho_{\mathfrak{q}}(t\mathfrak{p}^{-1})\prod_{j=1}^{2}\frac{W_{t,\mathfrak{p}_{j}}^{\psi_{F}'}(0,\phi_{\mathfrak{p}_{j}}^{(i)})}{\gamma(W_{\mathfrak{p}_{j}}')}.
\end{align*}
The local Whittaker functions $W_{t,\mathfrak{p}_{j}}^{\psi'_{F}}(0,\phi_{\mathfrak{p}_{j}}^{(i)})/\gamma(W'_{\mathfrak{p}_{j}})$ are computed explicitly in Corollary~\ref{evalw}.

For $a_{0}(\phi_{0,k}^{(i)})$, by definition, we first know that
\begin{align*}
&a_{0}(\phi_{0,k}^{(i)})\\
&=-\left.\left(\frac{L(1+s,\chi_{\mathfrak{p}_{1}})W_{0,\mathfrak{p}_{1}}^{\psi_{F}}(s,\phi_{\mathfrak{p}_{1}}^{(i)})}{L(s,\chi_{\mathfrak{p}_{1}})\gamma(W_{\mathfrak{p}_{1}})}\frac{L(1+s,\chi_{\mathfrak{p}_{2}})W_{0,\mathfrak{p}_{2}}^{\psi_{F}}(s,\phi_{\mathfrak{p}_{2}}^{(i+k)})}{L(s,\chi_{\mathfrak{p}_{2}})\gamma(W_{\mathfrak{p}_{2}})}\prod_{\mathfrak{p}\nmid p}\frac{L(1+s,\chi_{\mathfrak{p}})W_{0,\mathfrak{p}}^{\psi_{F}}(s,\phi_{0,0,\mathfrak{p}})}{L(s,\chi_{\mathfrak{p}})\gamma(W_{\mathfrak{p}})}\right)'\right|_{s=0}.
\end{align*}
By \cite[Prop. 5.7]{yyy}, one can check that
$$
\left.\left(\frac{L(1+s,\chi_{\mathfrak{p}})W_{0,\mathfrak{p}}^{\psi_{F}}(s,\phi_{0,0,\mathfrak{p}})}{L(s,\chi_{\mathfrak{p}})\gamma(W_{\mathfrak{p}})}\right)'\right|_{s=0}=0
$$
and
$$
\left.\frac{L(1+s,\chi_{\mathfrak{p}})W_{0,\mathfrak{p}}^{\psi_{F}}(s,\phi_{0,0,\mathfrak{p}})}{L(s,\chi_{\mathfrak{p}})\gamma(W_{\mathfrak{p}})}\right|_{s=0}=1.
$$
Thus, we can deduce that
$$
a_{0}(\phi_{0,k}^{(i)})=-\left.\left(\frac{L(1+s,\chi_{\mathfrak{p}_{1}})W_{0,\mathfrak{p}_{1}}^{\psi_{F}}(s,\phi_{\mathfrak{p}_{1}}^{(i)})}{L(s,\chi_{\mathfrak{p}_{1}})\gamma(W_{\mathfrak{p}_{1}})}\frac{L(1+s,\chi_{\mathfrak{p}_{2}})W_{0,\mathfrak{p}}^{\psi_{F}}(s,\phi_{\mathfrak{p}_{2}}^{(i+k)})}{L(s,\chi_{\mathfrak{p}})\gamma(W_{\mathfrak{p}})}\right)'\right|_{s=0},
$$
and by Lemma \ref{wst}, we know that for $i\ne0$
$$
\left.\left(\frac{L(1+s,\chi_{\mathfrak{p}_{j}})W_{0,\mathfrak{p}_{j}}^{\psi_{F}}(s,\phi_{\mathfrak{p}_{j}}^{(i)})}{L(s,\chi_{\mathfrak{p}_{j}})\gamma(W_{\mathfrak{p}_{j}})}\right)\right|_{s=0}=0
$$
and
$$
\left.\left(\frac{L(1+s,\chi_{\mathfrak{p}_{j}})W_{0,\mathfrak{p}_{j}}^{\psi_{F}}(s,\phi_{\mathfrak{p}_{j}}^{(i)})}{L(s,\chi_{\mathfrak{p}_{j}})\gamma(W_{\mathfrak{p}_{j}})}\right)'\right|_{s=0}=\frac{2\log{p}}{p-1}.
$$
Also, by Lemma \ref{wst}, one can check that
$$
\left.\left(\frac{L(1+s,\chi_{\mathfrak{p}_{j}})W_{0,\mathfrak{p}_{j}}^{\psi_{F}}(s,\phi_{\mathfrak{p}_{j}}^{(0)})}{L(s,\chi_{\mathfrak{p}_{j}})\gamma(W_{\mathfrak{p}_{j}})}\right)\right|_{s=0}=1.
$$
Therefore, we have
$$
a_{0}(\phi_{0,k})=-\frac{4\log{p}}{p-1}.
$$
}

\item[Case 2.]{When $\left(\frac{d_{1}}{p}\right)=\left(\frac{d_{2}}{p}\right)=-1$, then $p\mathcal{O}_{F}=\mathfrak{p}_{1}\mathfrak{p}_{2}$ and $\mathfrak{p}_{i}$ are inert in $E/F$. We have 
$$
E_{p}\cong E_{\mathfrak{p}_{1}}\times E_{\mathfrak{p}_{2}}
$$
 and the following identification (see, e.g., \cite[Prop. 4.31]{RV})
\begin{align*}
(\lambda_{\mathfrak{p}_{1}},\lambda_{\mathfrak{p}_{2}}):E_{p}&\to E_{\mathfrak{p}_{1}}\times E_{\mathfrak{p}_{2}}\\
\sqrt{d_{1}}&\to(\sqrt{d_{1}},\sqrt{d_{1}}),\\
\sqrt{d_{2}}&\to(\sqrt{d_{2}},-\sqrt{d_{2}}).
\end{align*}
Since $p\mathcal{O}_{E}\subset L_{p}$, then it is easy to see that
$$
\mathcal{L}_{p}=\coprod_{h,i,j\in\mathbb{Z}/p\mathbb{Z}}\left\{\left(\lambda_{\mathfrak{p}_{1}}(\gamma_{hij}),\lambda_{\mathfrak{p}_{2}}(\gamma_{hij})\right)+p\mathcal{O}_{E_{\mathfrak{p}_{1}}}\times p\mathcal{O}_{E_{\mathfrak{p}_{2}}}\right\}
$$
where
$$
\gamma_{hij}=h+i\frac{-d_{1}+\sqrt{d_{1}}}{2}+j\frac{d_{2}+\sqrt{d_{2}}}{2}.
$$
Thus, one has
$$
\phi_{0,k,p}=\sum_{h,i,j\in\mathbb{Z}/p\mathbb{Z}}\phi_{hij,\mathfrak{p}_{1}}^{(k)}\phi_{hij,\mathfrak{p}_{2}}^{(k)}
$$
where for $l=1,2$,
$$
\phi_{hij,\mathfrak{p}_{l}}^{(k)}={\rm Char}(\lambda_{\mathfrak{p}_{l}}(\gamma_{hij}^{(k)})+p\mathcal{O}_{E_{\mathfrak{p}_{l}}})
$$
and
$$
\gamma_{hij}^{(k)}=h+i\frac{-d_{1}+\sqrt{d_{1}}}{2}+j\frac{d_{2}+\sqrt{d_{2}}}{2}+k\frac{(-d_{1}+\sqrt{d_{1}})(d_{2}+\sqrt{d_{2}})}{4}.
$$
Then we can write
$$
\phi_{0,k}=\sum_{h,i,j\in\mathbb{Z}/p\mathbb{Z}}\Phi_{hij}^{(k)}
$$
where 
$$
\Phi_{hij}^{(k)}={\phi}_{hij,\mathfrak{p}_{1}}^{(k)}\phi_{hij,\mathfrak{p}_{2}}^{(k)}\prod_{\mathfrak{p}\nmid{p}}\phi_{0,0,\mathfrak{p}}.
$$
and we need to compute
$$
a_{1}(\phi_{0,0})=\sum_{h,i,j\in\mathbb{Z}/p\mathbb{Z}}a_{1}(\Phi_{hij}^{(0)})
$$
where
$$
a_{1}(\Phi_{hij}^{(0)})=\sum_{\substack{t=\frac{2m+D+\sqrt{D}}{2}\\ |2m+D|<\sqrt{D}\\m\in\Z}}a\left(\frac{t}{\sqrt{D}},\Phi_{hij}^{(0)}\right),
$$
and for $1\leq k\leq p-1$, 
$$
a_{0}(\phi_{0,k})=\sum_{h,i,j\in\mathbb{Z}/p\mathbb{Z}}a_{0}(\Phi_{hij}^{(k)})
$$
where
$$
a_{0}(\Phi_{hij}^{(k)})=-\tilde{W}'_{0,f}(0,\Phi^{(k)}_{hij}).
$$
Similar to Case~1, it is not hard to deduce that
\begin{align*}
&a\left(\frac{t}{\sqrt{D}},\Phi_{hij}^{(0)}\right)\\
&=-4\begin{cases}\displaystyle{\frac{p^{2}}{(p-1)^{2}}\frac{1+{\rm ord}_{\mathfrak{p}}(t)}{2}\log(N(\mathfrak{p}))\prod_{\mathfrak{q}\nmid{p}}\rho_\mathfrak{q}(t\mathfrak{p}^{-1})\prod_{l=1}^{2}\frac{W_{t,\mathfrak{p}_{l}}^{\psi_{F}'}(0,\phi_{hij,\mathfrak{p}_{l}}^{(0)})}{\gamma(W_{\mathfrak{p}_{l}}')}}&\text{if $\mathfrak{p}\nmid p$,}\\\\
\displaystyle{\frac{p}{p-1}\left.\left(\frac{p^{s+1}}{p^{s+1}-1}\frac{W_{t,{\mathfrak{p}_{t}}}^{\psi_{F}'}\left(s,{\phi}_{hij,\mathfrak{p}_{t}}^{(0)}\right)}{\gamma(W'_{{\mathfrak{p}_{t}}})}\right)'\right|_{s=0}\prod_{\mathfrak{q}\nmid p}\rho_{\mathfrak{q}}(t)\frac{W_{t,\tilde{\mathfrak{p}}_{t}}^{\psi_{F}'}(0,\phi_{hij,\tilde{\mathfrak{p}}_{t}}^{(0)})}{\gamma(W_{\tilde{\mathfrak{p}}_{t}}')}}&\text{if $\mathfrak{p}_{t}|p$ and above $t$,}
\end{cases}
\end{align*}
and for $1\leq k\leq p-1$,
\begin{align*}
&a_{0}(\Phi_{hij}^{(k)})\\
&=-\left.p^{2}\left(\prod_{l=1}^{2}\frac{(p^{s}-1)}{p^{s+1}-1}\frac{W_{0,\mathfrak{p}_{l}}^{\psi_{F}}(s,\phi_{hij,\mathfrak{p}_{l}}^{(k)})}{\gamma(W_{\mathfrak{p}_{l}})}\right)'\right|_{s=0}=-\left.p^{2}\left(\prod_{l=1}^{2}\frac{(p^{s}-1)}{p^{s+1}-1}\frac{W_{0,\mathfrak{p}_{l}}^{\psi'_{F}}(s,\phi_{hij,\mathfrak{p}_{l}}^{(k)})}{\gamma(W'_{\mathfrak{p}_{l}})}\right)'\right|_{s=0},
\end{align*}
where, by \cite[Cor. 5.3]{yyy},
\begin{align*}
&\frac{W_{t,{\mathfrak{p}_{l}}}^{\psi_{F}'}\left(s,{\phi}_{hij,\mathfrak{p}_{l}}^{(k)}\right)}{\gamma(W'_{{\mathfrak{p}_{l}}})}\\
&=\begin{cases}0&\mbox{if $o_{\mathfrak{p}_{l}}(p^{2}\eta_{hij,\mathfrak{p}_{l}}^{(k)}\bar{\eta}_{hij,\mathfrak{p}_{l}}^{(k)}-t)<0$,}\\\\
\displaystyle{(1-\frac{1}{p^{s}})\sum_{n=0}^{o_{\mathfrak{p}_{l}}(p^{2}\eta_{hij,\mathfrak{p}_{l}}^{(k)}\bar{\eta}_{hij,\mathfrak{p}_{l}}^{(k)}-t)}p^{n(1-s)}}&\mbox{if $0\leq o_{\mathfrak{p}_{l}}(p^{2}\eta_{hij,\mathfrak{p}_{l}}^{(k)}\bar{\eta}_{hij,\mathfrak{p}_{l}}^{(k)}-t)<o_{E_{\mathfrak{p}_{l}}}(\eta_{hij,\mathfrak{p}_{l}}^{(k)})$+2,}\\\\
\displaystyle{(1-\frac{1}{p^{s}})\sum_{n=0}^{o_{\mathfrak{p}_{l}}(p^{2}\eta_{hij,\mathfrak{p}_{l}}^{(k)}\bar{\eta}_{hij,\mathfrak{p}_{l}}^{(k)}-t)}p^{n(1-s)-2}}&\mbox{if $o_{\mathfrak{p}_{l}}(p^{2}\eta_{hij,\mathfrak{p}_{l}}^{(k)}\bar{\eta}_{hij,\mathfrak{p}_{l}}^{(k)}-t)\geq o_{E_{\mathfrak{p}_{l}}}(\eta_{hij,\mathfrak{p}_{l}}^{(k)})+2$,}\\
\qquad\qquad+p^{(o_{E_{\mathfrak{p}_{l}}}(\eta_{hij,\mathfrak{p}_{l}}^{(k)})+2)(1-s)-2}&\mbox{}\end{cases}
\end{align*}
and
$
\eta_{hij,\mathfrak{p}_{l}}^{(k)}=\frac{1}{p}\lambda_{\mathfrak{p}_{l}}(\gamma_{hij}^{(k)}),
$
$o_{\mathfrak{p}_{l}}(x)={\rm ord}_{\pi_{F_{\mathfrak{p}_{l}}}}(x)$, $\pi_{F_{\mathfrak{p}_{l}}}$ is the uniformizer of  $F_{\mathfrak{p}_{l}}$, $o_{E_{\mathfrak{p}_{l}}}(x)={\rm ord}_{\pi_{E_{\mathfrak{p}_{l}}}}(x)$, 
$\pi_{E_{\mathfrak{p}_{l}}}$ is the uniformizer of $E_{\mathfrak{p}_{l}}$. One can easily see that in this case,
$
a_{0}(\phi_{0,k})=0.
$
}


\item[Case 3.]{When $\left(\frac{d_{1}}{p}\right)=1$ and $\left(\frac{d_{2}}{p}\right)=-1$, then $p$ is inert in $F$ and is split in $E$, that is, $p\mathcal{O}_{F}=p$ and $p\mathcal{O}_{E}=\mathcal{B}\bar{\mathcal{B}}$. Then 

$$
E_{p}\cong F_{p}\times F_{p}
$$
and we have the following identification
\begin{align*}
\lambda:E_{p}&\to F_{p}\times F_{p}\\
\sqrt{d_{1}}&\to(\sqrt{d_{1}},-\sqrt{d_{1}}),\\
\sqrt{d_{2}}&\to(\sqrt{d_{2}},-\sqrt{d_{2}}).
\end{align*}
Clearly, the characteristic function $\phi_{k}={\rm Char}(\mu_{0,k}+L_{p}\otimes\hat{\mathbb{Z}})$ is factorizable over the spectrum of $F$, namely,
$$
\phi_{k}=\phi_{0,k,p}\prod_{\mathfrak{p}\nmid p}\phi_{0,0,\mathfrak{p}}
$$
where $\phi_{0,k,p}={\rm Char}(\mu_{0,k}+\mathcal{L}_{p})$ and $\phi_{0,0,\mathfrak{p}}={\rm Char}(\mathcal{O}_{E,\mathfrak{p}})$. Similar to Case~2, we have
$$
\phi_{0,k,p}=\sum_{h,i,j\in\mathbb{Z}/p\mathbb{Z}}\phi_{hij}^{(k)}
$$
where
$$
\phi_{hij}^{(k)}={\rm Char}(\lambda(\gamma_{hij}^{(k)})+p\mathcal{O}_{E_{p}})
$$
and
$$
\gamma_{hij}^{(k)}=h+i\frac{-d_{1}+\sqrt{d_{1}}}{2}+j\frac{d_{2}+\sqrt{d_{2}}}{2}+k\frac{(-d_{1}+\sqrt{d_{1}})(d_{2}+\sqrt{d_{2}})}{4}.
$$
And similarly, we write
$$
\phi_{0,k}=\sum_{h,i,j\in\mathbb{Z}/p\mathbb{Z}}\Phi_{hij}^{(k)}
$$
where 
$$
\Phi_{hij}^{(k)}={\phi}_{hij}^{(k)}\prod_{\mathfrak{p}\nmid p}\phi_{0,0,\mathfrak{p}}.
$$
and we aim to compute
$$
a_{1}(\phi_{0,0})=\sum_{h,i,j\in\mathbb{Z}/p\mathbb{Z}}a_{1}(\Phi_{hij}^{(0)})
$$
where
$$
a_{1}(\Phi_{hij}^{(0)})=\sum_{\substack{t=\frac{2m+D+\sqrt{D}}{2}\\ |2m+D|<\sqrt{D}\\m\in\Z}}a\left(\frac{t}{\sqrt{D}},\Phi_{hij}^{(0)}\right),
$$
and for $1\leq k\leq p-1$, 
$$
a_{0}(\phi_{0,k})=\sum_{h,i,j\in\mathbb{Z}/p\mathbb{Z}}a_{0}(\Phi_{hij}^{(k)})
$$
where
$$
a_{0}(\Phi_{hij}^{(k)})=-\tilde{W}'_{0,f}(0,\Phi^{(k)}_{hij}).
$$
Similar to Case~1, we can easily deduce that
$$
a\left(\frac{t}{\sqrt{D}},\Phi_{hij}^{(0)}\right)=-4\frac{p^{2}}{p^{2}-1}\sum_{\text{$\mathfrak{p}$ inert in $E/F$}}\frac{1+{\rm ord}_{\mathfrak{p}}(t)}{2}\log(N(\mathfrak{p}))\prod_{\mathfrak{q}\nmid p}\rho_{\mathfrak{q}}(t\mathfrak{p}^{-1})\frac{W_{t,p}^{\psi_{F}'}\left(0,{\phi}_{hij}^{(0)}\right)}{\gamma(W'_{p})}.
$$
Also, for $a_{0}(\Phi_{hij}^{(k)})$, we aim to compute
$$
a_{0}(\Phi_{hij}^{(k)})=-\left.\left(\frac{L(1+s,\chi_{p})}{L(s,\chi_{p})}\frac{W_{0,p}^{\psi_{F}}\left(s,{\phi}_{hij}^{(k)}\right)}{\gamma(W_{p})}\prod_{\mathfrak{p}\nmid p}\frac{L(1+s,\chi_{\mathfrak{p}})W_{0,\mathfrak{p}}^{\psi_{F}}(s,\phi_{0,0,\mathfrak{p}})}{L(s,\chi_{\mathfrak{p}})\gamma(W_{\mathfrak{p}})}\right)'\right|_{s=0}.
$$
Again, similar to Case~1, we can easily deduce that
$$
a_{0}(\Phi_{hij}^{(k)})=-\frac{2p^{2}\log{p}}{p^{2}-1}\frac{W_{0,p}^{\psi_{F}}\left(0,{\phi}_{hij}^{(k)}\right)}{\gamma(W_{p})}=-\frac{2p^{2}\log{p}}{p^{2}-1}\frac{W_{0,p}^{\psi_{F}'}\left(0,{\phi}_{hij}^{(k)}\right)}{\gamma(W'_{p})}
$$
where the second equality follows from \eqref{wwphi}. Finally, by \cite[Cor. 5.3]{yyy}, we have
$$
\frac{W_{t,p}^{\psi_{F}'}\left(0,{\phi}_{hij}^{(k)}\right)}{\gamma(W'_{p})}=\begin{cases}0&\mbox{if $o(p^{2}\eta_{hij}^{(k)}\bar{\eta}_{hij}^{(k)}-t)<o_{E_{p}}(\eta_{hij}^{(k)})+2$,}\\\\
p^{2o_{E_{p}}(\eta_{hij}^{(k)})+2}&\mbox{if $o(p^{2}\eta_{hij}^{(k)}\bar{\eta}_{hij}^{(k)}-t)\geq o_{E_{p}}(\eta_{hij}^{(k)})+2$,}\end{cases}
$$
where 
$
\eta_{hij}^{(k)}=\frac{1}{p}\gamma_{hij}^{(k)},
$
 $o(x)={\rm ord}_{\pi_{F_{p}}}(x)$, $\pi_{F_{p}}$ is the uniformizer of $F_{p}$, $o_{E_{p}}(x)=\min\{o(x_{1}),o(x_{2})\}$ for $x=(x_{1},x_{2})$ under the given identification $E_{p}=F_{p}\times F_{p}$.

}


\item[Case 4.]{When $p\nmid d_{1}$ and $\left(\frac{d_{2}}{p}\right)=1$, then $p\mathcal{O}_{F}=\tilde{\mathfrak{p}}^{2}$ and $\tilde{\mathfrak{p}}$ is split in $E/F$. Then we have $F_{p}\cong F_{\tilde{\mathfrak{p}}}$ and 
$$
E_{p}\cong F_{p}\times F_{p}
$$
with the following identification
\begin{align*}
\lambda:E_{p}&\to F_{p}\times F_{p}\\
\sqrt{d_{1}}&\to(\sqrt{d_{1}},-\sqrt{d_{1}}),\\
\sqrt{d_{2}}&\to(\sqrt{d_{2}},-\sqrt{d_{2}}).
\end{align*}
The treatment if essentially the same as Case~3, and in this case, we have
$$
a\left(\frac{t}{\sqrt{D}},\Phi_{hij}^{(0)}\right)=-4\frac{p}{p-1}\sum_{\text{$\mathfrak{p}$ inert in $E/F$}}\frac{1+{\rm ord}_{\mathfrak{p}}(t)}{2}\log(N(\mathfrak{p}))\prod_{\mathfrak{q}\nmid p}\rho_{\mathfrak{q}}(t\mathfrak{p}^{-1})\frac{W_{t,\tilde{\mathfrak{p}}}^{\psi_{F}'}\left(0,{\phi}_{hij}^{(0)}\right)}{\gamma(W'_{\tilde{\mathfrak{p}}})}
$$
and
$$
a_{0}(\Phi_{hij}^{(k)})=-\frac{p\log{p}}{p-1}\frac{W_{0,\tilde{\mathfrak{p}}}^{\psi_{F}'}\left(0,{\phi}_{hij}^{(k)}\right)}{\gamma(W'_{\tilde{\mathfrak{p}}})},
$$
where
$$
\frac{W_{t,\tilde{\mathfrak{p}}}^{\psi_{F}'}\left(0,\tilde{\phi}_{hij}^{(k)}\right)}{\gamma(W'_{\tilde{\mathfrak{p}}})}=\begin{cases}0&\mbox{if $o(p^{2}\eta_{hij}^{(k)}\bar{\eta}_{hij}^{(k)}-t)<o_{E_{p}}(\eta_{hij}^{(k)})$+4,}\\\\
p^{o_{E_{p}}(\eta_{hij}^{(k)})+2}&\mbox{if $o(p^{2}\eta_{hij}^{(k)}\bar{\eta}_{hij}^{(k)}-t)\geq o_{E_{p}}(\eta_{hij}^{(k)})+4$,}\end{cases}
$$
$$
\eta_{hij}^{(k)}=\frac{1}{p}\left(h+i\frac{-d_{1}+\sqrt{d_{1}}}{2}+j\frac{d_{2}+\sqrt{d_{2}}}{2}+k\frac{(-d_{1}+\sqrt{d_{1}})(d_{2}+\sqrt{d_{2}})}{4}\right),
$$
and $\pi_{F_{p}}$ is the uniformizer of $F_{p}$, $o_{E_{p}}(x)=\min\{o(x_{1}),o(x_{2})\}$ for $x=(x_{1},x_{2})$ under the given identification $E_{p}=F_{p}\times F_{p}$.

}


\item[Case 5.]{When $p\nmid d_{1}$ and $\left(\frac{d_{2}}{p}\right)=-1$, then $p\mathcal{O}_{F}=\tilde{\mathfrak{p}}^{2}$ and $\tilde{\mathfrak{p}}$ is inert in $E/F$. Similarly, the treatment of this case is essentially the same as the previous two, but one needs to consider an extra subcase, say $\mbox{Diff}(W,t/\sqrt{D})=\{\tilde{\mathfrak{p}}\}$. Thus we have
\begin{align*}
&a\left(\frac{t}{\sqrt{D}},\Phi_{hij}^{(0)}\right)\\
&=-4\begin{cases}\displaystyle{\frac{p}{p-1}\frac{1+{\rm ord}_{\mathfrak{p}}(t)}{2}\log(N(\mathfrak{p}))\prod_{\mathfrak{q}\nmid p}\rho_{\mathfrak{q}}(t\mathfrak{p}^{-1})\frac{W_{t,\tilde{\mathfrak{p}}}^{\psi_{F}'}\left(0,{\phi}_{hij}^{(0)}\right)}{\gamma(W'_{\tilde{\mathfrak{p}}})}}&\mbox{if $\mbox{Diff}(W,t/\sqrt{D})=\{\mathfrak{p}\}$ and $\mathfrak{p}\ne\tilde{\mathfrak{p}}$,}\\\\
\displaystyle{\left.\left(\frac{p^{2s+1}}{p^{s+1}-1}\frac{W_{t,\tilde{\mathfrak{p}}}^{\psi_{F}'}\left(s,{\phi}_{hij}^{(0)}\right)}{\gamma(W'_{\tilde{\mathfrak{p}}})}\right)'\right|_{s=0}\prod_{\mathfrak{q}\nmid p}\rho_{\mathfrak{q}}(t)}&\mbox{if $\mbox{Diff}(W,t/\sqrt{D})=\{\tilde{\mathfrak{p}}\}$,}\end{cases}
\end{align*}
and
$$
a_{0}(\Phi_{hij}^{(k)})=-\frac{p\log{p}}{p-1}\frac{W_{0,\tilde{\mathfrak{p}}}^{\psi_{F}'}\left(0,{\phi}_{hij}^{(k)}\right)}{\gamma(W'_{\tilde{\mathfrak{p}}})},
$$
where
\begin{align*}
&\frac{W_{t,\tilde{\mathfrak{p}}}^{\psi_{F}'}\left(s,{\phi}_{hij}^{(k)}\right)}{\gamma(W'_{\tilde{\mathfrak{p}}})}\\
&=\begin{cases}
0&\mbox{if $o(p^{2}\eta_{hij}^{(k)}\bar{\eta}_{hij}^{(k)}-t)<0$,}\\\\
\displaystyle{(1-\frac{1}{p^{s}})\sum_{n=0}^{o(p^{2}\eta_{hij}^{(k)}\bar{\eta}_{hij}^{(k)}-t)}p^{n(1-s)}}&\mbox{if $0\leq o(p^{2}\eta_{hij}^{(k)}\bar{\eta}_{hij}^{(k)}-t)<o_{E_{p}}(\eta_{hij}^{(k)})$+4,}\\\\
\displaystyle{(1-\frac{1}{p^{s}})\sum_{n=0}^{o(p^{2}\eta_{hij}^{(k)}\bar{\eta}_{hij}^{(k)}-t)}p^{n(1-s)-4}+p^{(o_{E_{p}}(\eta_{hij}^{(k)})+4)(1-s)-4}}&\mbox{if $o(p^{2}\eta_{hij}^{(k)}\bar{\eta}_{hij}^{(k)}-t)\geq o_{E_{p}}(\eta_{hij}^{(k)})+4$,}\end{cases}
\end{align*}
$$
\eta_{hij}^{(k)}=\frac{1}{p}\left(h+i\frac{-d_{1}+\sqrt{d_{1}}}{2}+j\frac{d_{2}+\sqrt{d_{2}}}{2}+k\frac{(-d_{1}+\sqrt{d_{1}})(d_{2}+\sqrt{d_{2}})}{4}\right),
$$
and $\pi_{F_{p}}$ is the uniformizer of $F_{p}$, $o_{E_{p}}(x)={\rm ord}_{\pi_{E_{\tilde{\mathfrak{p}}}}}(x)$, $\pi_{E_{\tilde{\mathfrak{p}}}}$ is the uniformizer of $E_{\tilde{\mathfrak{p}}}$.

}
\end{enumerate}

Finally, it remains to compute the local Whittaker functions $W_{t,\mathfrak{p}_{j}}^{\psi'_{F}}(0,\phi_{\mathfrak{p}_{j}}^{(i)})/\gamma(W'_{\mathfrak{p}_{j}})$ mentioned in Case~1. The following lemma essentially gives what we need. 
\begin{Lemma}
\label{wst}
Let $W=\mathbb{Q}_{p}^{2}$ with the quadratic form $Q(x)=x_{1}x_{2}$ and $\phi^{(i)}$ be defined as above. Let $N(i,m)=|\{j\in(\Z/p\Z)^{\times}|\,j(i-j)\equiv m\pmod{p}\}|$. Then the local Whittaker function $W_{t}(s,\phi^{(i)})$ for $t\in\Z_{p}$ is given by
\begin{align*}
\frac{W_{t}(s,\phi^{(0)})}{\gamma(W)}&=\begin{cases}\frac{1}{p}-\frac{1}{p^{1+s}}&\text{if $t\in k+p\Z_{p}$ and $\left(\frac{k}{p}\right)=1$,}\\\frac{1}{p}+\frac{1}{p^{1+s}}&\text{if $t\in k+p\Z_{p}$ and $\left(\frac{k}{p}\right)=-1$,}\\
\frac{1}{p}+\frac{p-1}{p}\sum_{n=2}^{{\rm ord}_{p}(t)}\frac{1}{p^{ns}}-\frac{1}{p^{1+(1+{\rm ord}_{p}(t))s}}&\text{if $t\in p\Z_{p}$,}\end{cases}\\
\intertext{and for $1\leq i\leq p-1$,}
\frac{W_{t}(s,\phi^{(i)})}{\gamma(W)}&=\begin{cases}\frac{1}{p}+\frac{1}{p^{1+s}}&\text{if $t\in p\Z_{p}$,}\\\frac{1}{p}+\frac{1}{p^{1+s}}(N(i,m)-1)&\text{if $t\in m+p\Z_{p}\subset\Z_{p}^{\times}$.}
\end{cases}
\end{align*}
\end{Lemma}

\begin{proof}
By the definition of $W_{t}(s,\phi)/\gamma(W)$ and unfolding, we have
\begin{align*}
\frac{W_{t}(s,\phi^{(i)})}{\gamma(W)}&=\int_{\mathbb{Q}_{p}}J_{i}(b)\psi(-tb)|a(wn(b))|^{s}db\\
&=\int_{\mathbb{Z}_{p}}J_{i}(b)\psi(-tb)db+\sum_{n=1}^{\infty}p^{n}\int_{\mathbb{Z}_{p}^{\times}}J_{i}(p^{-n}b)\psi(-p^{-n}tb)|a(wn(p^{-n}b))|^{s}db
\end{align*}
where
$$
J_{i}(b)=\int_{M_{i}}\psi(bx_{1}x_{2})dx_{1}dx_{2}.
$$
We first compute the integral $J_{i}(b)$. For $i=0$, we can deduce that
\begin{align*}
J_{0}(b)&=\int_{x_{1}+x_{2}\equiv0\pmod{p}}\psi(bx_{1}x_{2})dx_{1}dx_{2}\\
&=\frac{1}{p}\int_{\Z_{p}}\int_{\Z_{p}}\psi(b(py-x_{1})x_{1})dydx_{1}\\
&=\frac{1}{p}\int_{\Z_{p}}\psi(-bx_{1}^{2})\left(\int_{\Z_{p}}\psi(bpyx_{1})dy\right)dx_{1}\\
&=\frac{1}{p}\int_{\Z_{p}}\psi(-bx_{1}^{2}){\rm Char}(\Z_{p})(bpx_{1})dx_{1}.
\end{align*}
Now we compute the last integral case by case.
\begin{enumerate}
\item{For $b\in\Z_{p}$, 
\begin{align*}
J_{0}(b)&=\frac{1}{p}\int_{\Z_{p}}\psi(-bx_{1}^{2}){\rm Char}(\Z_{p})(bpx_{1})dx_{1}\\
&=\frac{1}{p}\int_{\Z_{p}}\psi(-bx_{1}^{2})dx_{1}\\
&=\frac{1}{p},
\end{align*}
}

\item{for $b\in\frac{k}{p}+\Z_{p}$ with $1\leq k\leq p-1$,
\begin{align*}
J_{0}(b)&=\frac{1}{p}\int_{\Z_{p}}\psi(-bx_{1}^{2}){\rm Char}(\Z_{p})(bpx_{1})dx_{1}\\
&=\frac{1}{p}\int_{\Z_{p}}\psi(-bx_{1}^{2})dx_{1}\\
&=\frac{1}{p}\left(\int_{p\Z_{p}}1dx_{1}+\int_{\Z_{p}^{\times}}\psi(-bx_{1}^{2})dx_{1}\right)\\
&=\frac{1}{p}\left(\frac{1}{p}+\sum_{j=1}^{p-1}\int_{j+p\Z_{p}}\psi(-bx_{1}^{2})dx_{1}\right)\\
&=\frac{1}{p}\left(\frac{1}{p}+\frac{1}{p}\sum_{j=1}^{p-1}e^{-\frac{2\pi ij^{2}k}{p}}\right),
\end{align*}
}

\item{for $b\not\in\frac{1}{p}\Z_{p}$,
\begin{align*}
J_{0}(b)&=\frac{1}{p}\int_{\Z_{p}}\psi(-bx_{1}^{2}){\rm Char}(\Z_{p})(bpx_{1})dx_{1}\\
&=\frac{1}{p}\int_{\frac{1}{bp}\Z_{p}}\psi(-bx_{1}^{2})dx_{1}\\
&=\frac{1}{p}\int_{\frac{1}{bp}\Z_{p}}1dx_{1}\\
&=|b|_{p}^{-1}.
\end{align*}
}
\end{enumerate}
In summary, one has
$$
J_{0}(b)=\begin{cases}\frac{1}{p}&\text{if $b\in\Z_{p}$,}\\\frac{1}{p^{2}}\left(1+\sum_{j=1}^{p-1}e^{-\frac{2\pi ij^{2}k}{p}}\right)&\text{if $b\in\frac{k}{p}+p\Z_{p}\subset\frac{1}{p}\Z_{p}^{\times}$,}\\
|b|_{p}^{-1}&\text{if $b\not\in\frac{1}{p}\Z_{p}$.}\end{cases}
$$
Now we are ready to compute
\begin{align*}
&\frac{W_{t}(s,\phi^{(0)})}{\gamma(W)}\\
&=\int_{\Z_{p}}J_{0}(b)\psi(-tb)db+\sum_{n=1}^{\infty}p^{n-ns}\int_{\Z_{p}^{\times}}J_{0}(p^{-n}b)\psi(-p^{-n}tb)db\\
&=\frac{1}{p}+p^{1-s}\sum_{j=1}^{p-1}\int_{j+p\Z_{p}}J_{0}(p^{-1}b)\psi(-p^{-1}tb)db+\sum_{n=2}^{\infty}p^{n-ns}\sum_{j=1}^{p-1}\int_{j+p\Z_{p}}J_{0}(p^{-n}b)\psi(-p^{-n}tb)db,
\end{align*}
where
\begin{enumerate}
\item{for $t\in -k+p\Z_{p}\subset\Z_{p}^{\times}$, 
\begin{align*}
&\frac{1}{p}+p^{1-s}\sum_{j=1}^{p-1}\int_{j+p\Z_{p}}J_{0}(p^{-1}b)\psi(-p^{-1}tb)db+\sum_{n=2}^{\infty}p^{n-ns}\sum_{j=1}^{p-1}\int_{j+p\Z_{p}}J_{0}(p^{-n}b)\psi(-p^{-n}tb)db\\
&=\frac{1}{p}+\frac{1}{p^{2+s}}\sum_{j=1}^{p-1}\left(1+\sum_{m=1}^{p-1}e^{-\frac{2\pi im^{2}j}{p}}\right)e^{\frac{2\pi ijk}{p}}\\
&=\frac{1}{p}+\frac{1}{p^{2+s}}\left(-1+\sum_{\substack{1\leq m\leq p-1\\m^{2}\equiv k\pmod{p}}}(p-1)-\sum_{\substack{1\leq m\leq p-1\\m^{2}\not\equiv k\pmod{p}}}1\right)\\
&=\begin{cases}\frac{1}{p}-\frac{1}{p^{1+s}}&\text{if $\left(\frac{k}{p}\right)=-1$,}\\\frac{1}{p}+\frac{1}{p^{1+s}}&\text{if $\left(\frac{k}{p}\right)=1$,}\end{cases}
\end{align*}}
\item{for $t\in p\Z_{p}$,
\begin{align*}
&\frac{1}{p}+p^{1-s}\sum_{j=1}^{p-1}\int_{j+p\Z_{p}}J_{0}(p^{-1}b)\psi(-p^{-1}tb)db+\sum_{n=2}^{\infty}p^{n-ns}\sum_{j=1}^{p-1}\int_{j+p\Z_{p}}J_{0}(p^{-n}b)\psi(-p^{-n}tb)db\\
&=\frac{1}{p}+\sum_{n=2}^{{\rm ord}_{p}(t)+1}\frac{1}{p^{ns}}\int_{\Z_{p}^{\times}}\psi(p^{-n+{\rm ord}_{p}(t)}b)db\\
&=\frac{1}{p}+\frac{p-1}{p}\sum_{n=2}^{{\rm ord}_{p}(t)}\frac{1}{p^{ns}}-\frac{1}{p^{1+(1+{\rm ord}_{p}(t))s}}.
\end{align*}}

\end{enumerate}
Therefore, one has
$$
\frac{W_{t}(s,\phi^{(0)})}{\gamma(W)}=\begin{cases}\frac{1}{p}-\frac{1}{p^{1+s}}&\text{if $t\in k+p\Z_{p}$ and $\left(\frac{k}{p}\right)=1$,}\\\frac{1}{p}+\frac{1}{p^{1+s}}&\text{if $t\in k+p\Z_{p}$ and $\left(\frac{k}{p}\right)=-1$,}\\
\frac{1}{p}+\frac{p-1}{p}\sum_{n=2}^{{\rm ord}_{p}(t)}\frac{1}{p^{ns}}-\frac{1}{p^{1+(1+{\rm ord}_{p}(t))s}}&\text{if $t\in p\Z_{p}$.}\end{cases}
$$
Now for $1\leq i\leq p-1$, similarly, we have
\begin{align*}
J_{i}(b)&=\int_{x_{1}+x_{2}\equiv i\pmod{p}}\psi(bx_{1}x_{2})dx_{1}dx_{2}\\
&=\frac{1}{p}\int_{\Z_{p}}\int_{\Z_{p}}\psi(bx_{1}(i+py-x_{1})dydx_{1}\\
&=\frac{1}{p}\int_{\Z_{p}}\psi(bx_{1}(i-x_{1})){\rm Char}(\Z_{p})(bpx_{1})dx_{1}.
\end{align*}
Then
\begin{enumerate}
\item{for $b\in\Z_{p}$,
\begin{align*}
J_{i}(b)&=\frac{1}{p}\int_{\Z_{p}}\psi(bx_{1}(i-x_{1})){\rm Char}(\Z_{p})(bpx_{1})dx_{1}\\
&=\frac{1}{p}\int_{\Z_{p}}1d{x_{1}}\\
&=\frac{1}{p},
\end{align*}
}

\item{for $b\in\frac{k}{p}+\Z_{p}\subset\frac{1}{p}\Z_{p}^{\times}$,
\begin{align*}
J_{i}(b)&=\frac{1}{p}\int_{\Z_{p}}\psi(bx_{1}(i-x_{1})){\rm Char}(\Z_{p})(bpx_{1})dx_{1}\\
&=\frac{1}{p}\int_{\Z_{p}}\psi(bx_{1}(i-x_{1}))dx_{1}\\
&=\frac{1}{p}\left(\int_{p\Z_{p}}1dx_{1}+\sum_{j=1}^{p-1}\int_{j+p\Z_{p}}\psi(bx_{1}(i-x_{1}))dx_{1}\right)\\
&=\frac{1}{p}\left(\frac{1}{p}+\frac{1}{p}\sum_{j=1}^{p-1}e^{\frac{2\pi i}{p}(kj(i-j))}\right),
\end{align*}
}

\item{for $b\in\frac{1}{p}\Z_{p}$,
\begin{align*}
J_{i}(b)&=\frac{1}{p}\int_{\Z_{p}}\psi(bx_{1}(i-x_{1})){\rm Char}(\Z_{p})(bpx_{1})dx_{1}\\
&=\frac{1}{p}\int_{\frac{1}{bp}\Z_{p}}\psi(bx_{1}(i-x_{1}))dx_{1}\\
&=\frac{1}{p}\left(\int_{\frac{1}{b}\Z_{p}}\psi(bx_{1}(i-x_{1}))dx_{1}+\int_{\frac{1}{bp}\Z_{p}^{\times}}\psi(bx_{1}(i-x_{1}))dx_{1}\right)\\
&=\frac{1}{p}\left(|b|_{p}^{-1}+\sum_{j=1}^{p-1}|b|_{p}^{-1}e^{\frac{2\pi i}{p}(ji)}\right)\\
&=0.
\end{align*}
}
\end{enumerate}
These can be summarized as follows.
$$
J_{i}(b)=\begin{cases}\frac{1}{p}&\text{if $b\in\Z_{p}$,}\\
\frac{1}{p^{2}}\left(1+\sum_{j=1}^{p-1}e^{\frac{2\pi i}{p}(kj(i-j))}\right)&\text{if $b\in\frac{k}{p}+\Z_{p}\subset\frac{1}{p}\Z_{p}^{\times}$,}\\
0&\text{if $b\not\in\frac{1}{p}\Z_{p}$.}
\end{cases}
$$
Then for $1\leq i\leq p-1$, $W_{t}(s,\phi^{(i)})/\gamma(W)$ can be computed in a similar way as $i=0$.
\begin{align*}
&\frac{W_{t}(s,\phi^{(i)})}{\gamma(W)}\\
&=\int_{\Z_{p}}J_{i}(b)\psi(-tb)db+\sum_{n=1}^{\infty}p^{n-ns}\int_{\Z_{p}^{\times}}J_{i}(p^{-n}b)\psi(-p^{-n}tb)db\\
&=\frac{1}{p}+\frac{1}{p^{1+s}}\sum_{k=1}^{p-1}\left(1+\sum_{j=1}^{p-1}e^{\frac{2\pi i}{p}(kj(i-j))}\right)\int_{-j+p\Z_{p}}\psi(p^{-1}tb)db,
\end{align*}
where
\begin{enumerate}
\item{for $t\in p\Z_{p}$,
\begin{align*}
&\frac{1}{p}+\frac{1}{p^{1+s}}\sum_{k=1}^{p-1}\left(1+\sum_{j=1}^{p-1}e^{\frac{2\pi i}{p}(kj(i-j))}\right)\int_{-j+p\Z_{p}}\psi(p^{-1}tb)db\\
&=\frac{1}{p}+\frac{1}{p^{2+s}}\sum_{k=1}^{p-1}\left(1+\sum_{j=1}^{p-1}e^{\frac{2\pi i}{p}(kj(i-j))}\right)\\
&=\frac{1}{p}+\frac{1}{p^{2+s}}\left(p-1-\sum_{\substack{1\leq j\leq p-1\\j\ne i}}1+p-1\right)\\
&=\frac{1}{p}+\frac{1}{p^{1+s}},
\end{align*}}

\item{for $t\in m+p\Z_{p}\subset \Z_{p}^{\times}$,
\begin{align*}
&\frac{1}{p}+\frac{1}{p^{1+s}}\sum_{k=1}^{p-1}\left(1+\sum_{j=1}^{p-1}e^{\frac{2\pi i}{p}(kj(i-j))}\right)\int_{-j+p\Z_{p}}\psi(p^{-1}tb)db\\
&=\frac{1}{p}+\frac{1}{p^{2+s}}\sum_{k=1}^{p-1}\left(1+\sum_{j=1}^{p-1}e^{\frac{2\pi i}{p}(kj(i-j))}\right)e^{-\frac{2\pi imk}{p}}\\
&=\frac{1}{p}+\frac{1}{p^{2+s}}\sum_{k=1}^{p-1}\left(e^{-\frac{2\pi imk}{p}}+\sum_{j=1}^{p-1}e^{\frac{2\pi i(j(i-j)-m)k}{p}}\right)\\
&=\frac{1}{p}+\frac{1}{p^{2+s}}\left(-1+\sum_{\substack{1\leq j\leq p-1\\j(i-j)\equiv m\pmod{p}}}(p-1)-\sum_{\substack{1\leq j\leq p-1\\j(i-j)\not\equiv m\pmod{p}}}1\right)\\
&=\frac{1}{p}+\frac{1}{p^{2+s}}\left(-1+(p-1)N(i,m)-(p-1)+N(i,m)\right)\\
&=\frac{1}{p}+\frac{1}{p^{1+s}}(N(i,m)-1).
\end{align*}}
\end{enumerate}
Therefore, we have
$$
\frac{W_{t}(s,\phi^{(i)})}{\gamma(W)}=\begin{cases}\frac{1}{p}+\frac{1}{p^{1+s}}&\text{if $t\in p\Z_{p}$,}\\\frac{1}{p}+\frac{1}{p^{1+s}}(N(i,m)-1)&\text{if $t\in m+p\Z_{p}\subset\Z_{p}^{\times}$.}
\end{cases}
$$
\end{proof}

Following from Lemma~\ref{wst}, the local Whittaker functions $W_{t,\mathfrak{p}_{j}}^{\psi'_{F}}(0,\phi_{\mathfrak{p}_{j}}^{(i)})/\gamma(W'_{\mathfrak{p}_{j}})$ can be computed explicitly as follows.

\begin{Corollary}
\label{evalw}
Fix $d\in(\Z/p\Z)^{\times}$ such that $\sqrt{D}\equiv d\pmod{p\mathbb{Z}_{p}}$, and fix the embeddings $
\sigma_{i}:F\hookrightarrow~F_{\mathfrak{p}_{i}}$ with $\mathfrak{p}_{1}\mathfrak{p}_{2}=p$. Write $t=\frac{2m+D+\sqrt{D}}{2}$ and $\tilde{d}=\frac{d^{2}+d}{2}$. Let $N(i,m)$ be defined as in Lemma~\ref{wst}. Then for $1\leq i\leq p-1$,
\begin{enumerate}
\item{when $m\equiv-\tilde{d}\pmod{p}$, 
$$
\frac{W_{t,\mathfrak{p}_{1}}^{\psi_{F}'}(0,\phi_{\mathfrak{p}_{1}}^{(i)})}{\gamma(W_{\mathfrak{p}_{1}}')}=\frac{2}{p}\quad\mbox{and}\quad \frac{W_{t,\mathfrak{p}_{2}}^{\psi_{F}'}(0,\phi_{\mathfrak{p}_{2}}^{(i)})}{\gamma(W_{\mathfrak{p}_{2}}')}=\frac{N(i,p-d)}{p};
$$
}

\item{when $m\equiv-\tilde{d}+d\pmod{p}$,
$$
\frac{W_{t,\mathfrak{p}_{1}}^{\psi_{F}'}(0,\phi_{\mathfrak{p}_{1}}^{(i)})}{\gamma(W_{\mathfrak{p}_{1}}')}=\frac{N(i,d)}{p}\quad\mbox{and}\quad \frac{W_{t,\mathfrak{p}_{2}}^{\psi_{F}'}(0,\phi_{\mathfrak{p}_{2}}^{(i)})}{\gamma(W_{\mathfrak{p}_{2}}')}=\frac{2}{p};
$$
}

\item{when $m\equiv-\tilde{d}+k\pmod{p}$ with $k\not\equiv0,d\pmod{p}$,
$$
\frac{W_{t,\mathfrak{p}_{1}}^{\psi_{F}'}(0,\phi_{\mathfrak{p}_{1}}^{(i)})}{\gamma(W_{\mathfrak{p}_{1}}')}=\frac{N(i,k)}{p}\quad\mbox{and}\quad \frac{W_{t,\mathfrak{p}_{2}}^{\psi_{F}'}(0,\phi_{\mathfrak{p}_{2}}^{(i)})}{\gamma(W_{\mathfrak{p}_{2}}')}=\frac{N(i,k-d)}{p},
$$
}
\end{enumerate}
and for $i=0$,
\begin{enumerate}
\item{when $m\equiv-\tilde{d}\pmod{p}$ and $\left(\frac{d}{p}\right)=-1$,
$$
\frac{W_{t,\mathfrak{p}_{1}}^{\psi_{F}'}(0,\phi_{\mathfrak{p}_{1}}^{(0)})}{\gamma(W_{\mathfrak{p}_{1}}')}=\frac{p-1}{p}({\rm ord}_{\mathfrak{p}_{1}}(\sigma_{1}(t))-1)\quad\mbox{and}\quad \frac{W_{t,\mathfrak{p}_{2}}^{\psi_{F}'}(0,\phi_{\mathfrak{p}_{2}}^{(0)})}{\gamma(W_{\mathfrak{p}_{2}}')}=0;
$$}

\item{when $m\equiv-\tilde{d}\pmod{p}$ and $\left(\frac{d}{p}\right)=1$,
$$
\frac{W_{t,\mathfrak{p}_{1}}^{\psi_{F}'}(0,\phi_{\mathfrak{p}_{1}}^{(0)})}{\gamma(W_{\mathfrak{p}_{1}}')}=\frac{p-1}{p}({\rm ord}_{\mathfrak{p}_{1}}(\sigma_{1}(t))-1)\quad\mbox{and}\quad \frac{W_{t,\mathfrak{p}_{2}}^{\psi_{F}'}(0,\phi_{\mathfrak{p}_{2}}^{(0)})}{\gamma(W_{\mathfrak{p}_{2}}')}=\frac{2}{p};
$$}

\item{when $m\equiv-\tilde{d}+d\pmod{p}$ and $\left(\frac{-d}{p}\right)=-1$,
$$
\frac{W_{t,\mathfrak{p}_{1}}^{\psi_{F}'}(0,\phi_{\mathfrak{p}_{1}}^{(0)})}{\gamma(W_{\mathfrak{p}_{1}}')}=0\quad\mbox{and}\quad \frac{W_{t,\mathfrak{p}_{2}}^{\psi_{F}'}(0,\phi_{\mathfrak{p}_{2}}^{(0)})}{\gamma(W_{\mathfrak{p}_{2}}')}=\frac{p-1}{p}({\rm ord}_{\mathfrak{p}_{2}}(\sigma_{2}(t))-1);
$$}

\item{when $m\equiv-\tilde{d}+d\pmod{p}$ and $\left(\frac{-d}{p}\right)=1$,
$$
\frac{W_{t,\mathfrak{p}_{1}}^{\psi_{F}'}(0,\phi_{\mathfrak{p}_{1}}^{(0)})}{\gamma(W_{\mathfrak{p}_{1}}')}=\frac{2}{p}\quad\mbox{and}\quad \frac{W_{t,\mathfrak{p}_{2}}^{\psi_{F}'}(0,\phi_{\mathfrak{p}_{2}}^{(0)})}{\gamma(W_{\mathfrak{p}_{2}}')}=\frac{p-1}{p}({\rm ord}_{\mathfrak{p}_{2}}(\sigma_{2}(t))-1);
$$}

\item{when $m\equiv-\tilde{d}+k$ with $k\not\equiv0,d\pmod{p}$, $\left(\frac{-k}{p}\right)=-1$ and $\left(\frac{d-k}{p}\right)=1$,
$$
\frac{W_{t,\mathfrak{p}_{1}}^{\psi_{F}'}(0,\phi_{\mathfrak{p}_{1}}^{(0)})}{\gamma(W_{\mathfrak{p}_{1}}')}=0\quad\mbox{and}\quad \frac{W_{t,\mathfrak{p}_{2}}^{\psi_{F}'}(0,\phi_{\mathfrak{p}_{2}}^{(0)})}{\gamma(W_{\mathfrak{p}_{2}}')}=\frac{2}{p};
$$}

\item{when $m\equiv-\tilde{d}+k$ with $k\not\equiv0,d\pmod{p}$, $\left(\frac{-k}{p}\right)=-1$ and $\left(\frac{d-k}{p}\right)=-1$,
$$
\frac{W_{t,\mathfrak{p}_{1}}^{\psi_{F}'}(0,\phi_{\mathfrak{p}_{1}}^{(0)})}{\gamma(W_{\mathfrak{p}_{1}}')}=0\quad\mbox{and}\quad \frac{W_{t,\mathfrak{p}_{2}}^{\psi_{F}'}(0,\phi_{\mathfrak{p}_{2}}^{(0)})}{\gamma(W_{\mathfrak{p}_{2}}')}=0;
$$}

\item{when $m\equiv-\tilde{d}+k$ with $k\not\equiv0,d\pmod{p}$, $\left(\frac{-k}{p}\right)=1$ and $\left(\frac{d-k}{p}\right)=1$,
$$
\frac{W_{t,\mathfrak{p}_{1}}^{\psi_{F}'}(0,\phi_{\mathfrak{p}_{1}}^{(0)})}{\gamma(W_{\mathfrak{p}_{1}}')}=\frac{2}{p}\quad\mbox{and}\quad \frac{W_{t,\mathfrak{p}_{2}}^{\psi_{F}'}(0,\phi_{\mathfrak{p}_{2}}^{(0)})}{\gamma(W_{\mathfrak{p}_{2}}')}=\frac{2}{p};
$$}

\item{when $m\equiv-\tilde{d}+k$ with $k\not\equiv0,d\pmod{p}$, $\left(\frac{-k}{p}\right)=1$ and $\left(\frac{d-k}{p}\right)=-1$,
$$
\frac{W_{t,\mathfrak{p}_{1}}^{\psi_{F}'}(0,\phi_{\mathfrak{p}_{1}}^{(0)})}{\gamma(W_{\mathfrak{p}_{1}}')}=\frac{2}{p}\quad\mbox{and}\quad \frac{W_{t,\mathfrak{p}_{2}}^{\psi_{F}'}(0,\phi_{\mathfrak{p}_{2}}^{(0)})}{\gamma(W_{\mathfrak{p}_{2}}')}=0.
$$}

\end{enumerate}

\end{Corollary}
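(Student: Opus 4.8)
The plan is to deduce Corollary~\ref{evalw} directly from Lemma~\ref{wst}: the local quadratic spaces $W'_{\mathfrak{p}_j}$ occurring in Case~1 are split hyperbolic planes over $\Q_p$, so after identifying the relevant Schwartz functions with the $\phi^{(i)}=\mathrm{Char}(M_i)$ of Lemma~\ref{wst} and normalizing away the twist by $\sqrt{D}$, the whole statement reduces to reading off the residue of $t$ modulo $p$ at each of the two primes $\mathfrak{p}_1,\mathfrak{p}_2$ of $F$ above $p$ and substituting.

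First I would record the local picture. In Case~1 we have $\left(\frac{d_1}{p}\right)=\left(\frac{d_2}{p}\right)=1$, hence $p\nmid D=d_1d_2$, $p\mathcal{O}_F=\mathfrak{p}_1\mathfrak{p}_2$ with $F_{\mathfrak{p}_j}\cong\Q_p$, and each $\mathfrak{p}_j$ splits in $E/F$; thus $W'_{\mathfrak{p}_j}$, the $F_{\mathfrak{p}_j}$-quadratic space with form $z\bar z$, is isomorphic to $(\Q_p^2,\,Q(x)=x_1x_2)$. Under the decomposition $\mathcal{L}_p=\coprod_{i=0}^{p-1}(M_i\times M_i)$ introduced in Case~1, the factor $\phi^{(i)}_{\mathfrak{p}_j}$ is exactly the characteristic function $\phi^{(i)}=\mathrm{Char}(M_i)$ studied in Lemma~\ref{wst}. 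Moreover, by \eqref{wwphi} together with $|D|_{\mathfrak{p}_j}=1$ (since $p\nmid D$), the normalized local Whittaker function $W^{\psi_F'}_{t,\mathfrak{p}_j}(s,\phi^{(i)}_{\mathfrak{p}_j})/\gamma(W'_{\mathfrak{p}_j})$ coincides with the standard-character function $W_t(s,\phi^{(i)})/\gamma(W)$ of Lemma~\ref{wst}, the variable $t$ now read through the embedding $\sigma_j\colon F\hookrightarrow F_{\mathfrak{p}_j}=\Q_p$; the twist of $\psi_F$ by $1/\sqrt{D}$ is harmless here precisely because $\sqrt{D}\in\Z_p^{\times}$ at each $\mathfrak{p}_j$, so an elementary change of variables in the defining integral absorbs it.

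Second I would reduce $t$ modulo $p$. Fix $d\in(\Z/p\Z)^{\times}$ with $\sqrt{D}\equiv d\pmod{p\Z_p}$ at $\mathfrak{p}_1$; then $\sqrt{D}\equiv -d$ at $\mathfrak{p}_2$ and $D\equiv d^2$. With $t=\frac{2m+D+\sqrt{D}}{2}$ and $\tilde d=\frac{d^2+d}{2}$ one gets
\[
\sigma_1(t)\equiv m+\tilde d,\qquad \sigma_2(t)\equiv m+\tilde d-d \pmod p .
\]
Now substitute these residues into Lemma~\ref{wst} at $s=0$. For $1\le i\le p-1$ the two branches are $\sigma_j(t)\equiv 0$ (value $2/p$) and $\sigma_j(t)\equiv k\not\equiv 0$ (value $N(i,k)/p$); running through $m\equiv-\tilde d$, $m\equiv-\tilde d+d$, and $m\equiv-\tilde d+k$ with $k\not\equiv 0,d$ gives items (1)--(3). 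For $i=0$ the branches are $\sigma_j(t)\equiv 0$ (value $\frac{p-1}{p}(\mathrm{ord}_{\mathfrak{p}_j}(\sigma_j(t))-1)$) and $\sigma_j(t)\equiv j\not\equiv 0$, the latter splitting further according to the sign of $\left(\frac{-j}{p}\right)$; tracking the resulting symbols $\left(\frac{d}{p}\right)$, $\left(\frac{-d}{p}\right)$, $\left(\frac{-k}{p}\right)$, $\left(\frac{d-k}{p}\right)$ at the two places produces items (1)--(8), and since $k\not\equiv 0,d$ both $j=k$ and $j=k-d$ lie in $\Z_p^{\times}$, so no degenerate case arises.

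The only genuinely delicate point is the normalization step: one must pin down that $W^{\psi_F'}_{t,\mathfrak{p}_j}$ is the Lemma~\ref{wst} function evaluated at the naive residue $\sigma_j(t)\bmod p$ rather than at some $\sqrt{D}$-rescaled residue — this is where the hypothesis $p\nmid D$ of Case~1 is used. Everything else is the elementary arithmetic of the congruences above, and the definition $\tilde d=(d^2+d)/2$ is chosen exactly so that these congruences come out cleanly at $\mathfrak{p}_1$ and $\mathfrak{p}_2$ simultaneously.
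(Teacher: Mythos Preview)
Your proposal is correct and takes essentially the same approach as the paper: the paper's proof is the single line ``These follow directly from Lemma~\ref{wst},'' and you have simply spelled out the bookkeeping --- identifying $W'_{\mathfrak{p}_j}$ with the split plane of Lemma~\ref{wst}, computing $\sigma_1(t)\equiv m+\tilde d$ and $\sigma_2(t)\equiv m+\tilde d-d\pmod p$, and then reading off each case. Your remark that the only delicate point is the harmless unit twist by $\sqrt{D}\in\Z_p^{\times}$ is exactly the content the paper leaves implicit.
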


\begin{proof}
These follow directly from Lemma \ref{wst}.
\end{proof}

{\bf Acknowledgment.} The author thanks Prof. Tonghai Yang for his guidance and encouragement, and he also thanks the anonymous referee for his/her useful comments, corrections and suggestions.

\end{document}